\newcommand{\details}[1]{$\,$\\***DETAILS {\bf #1}***\\}
\renewcommand{\details}[1]{}
\newtheorem{definition}{Definition}[section]
\newtheorem{proposition}[definition]{Proposition}
\newtheorem{remark}[definition]{Remark}
\newtheorem{theorem}[definition]{Theorem}
\newtheorem{corollary}[definition]{Corollary}
\newtheorem{lemma}[definition]{Lemma}
\newtheorem{example}[definition]{Example}
\def\RR{{\mathbb R}}
\def\NN{{\mathbb N}}
\def\N{{\mathbb N}}
\def\R{{\mathbb R}}
\newcommand{\cT}{\mathcal{T}}
\newcommand{\eps}{\varepsilon}
\newcommand{\ba}{\begin{array}} \newcommand{\ea}{\end{array}}
\newcommand {\bea} {\begin{eqnarray}} \newcommand {\eea} {\end {eqnarray}}
\newcommand{\beq}{\begin{equation}} \newcommand{\eeq}{\end{equation}}
\newcommand{\be}{\begin{enumerate}} \newcommand{\ee}{\end{enumerate}}
\newcommand {\bua} {\begin{eqnarray*}}
\newcommand {\eua} {\end {eqnarray*}}
\newcounter{ct}
\newcommand{\ds}{\displaystyle}
\newcommand{\ra}{\rightarrow}
\newcommand{\Ra}{\Rightarrow}
\newcommand{\se}{\subseteq}
\newcommand{\ol}{\overline}
\newcommand{\limn}{\ds\lim_{n\to\infty}}
\newcommand{\linfn}{\ds\liminf_{n\to\infty}}
\let\OLDthebibliography\thebibliography
\renewcommand\thebibliography[1]{
  \OLDthebibliography{#1}
  \setlength{\parskip}{0pt}
  \setlength{\itemsep}{0pt plus 0.3ex}
}
\begin{document}

\title{Quantitative results on Fej\'er monotone sequences}
\author{Ulrich Kohlenbach${}^1$, Lauren\c{t}iu Leu\c{s}tean${}^{2,3}$,  Adriana Nicolae${}^{4,5}$ \\[0.2cm]
\footnotesize ${}^1$ Department of Mathematics, Technische Universit\" at Darmstadt,\\
\footnotesize Schlossgartenstra\ss{}e 7, 64289 Darmstadt, Germany\\[0.1cm]
\footnotesize ${}^2$ Faculty of Mathematics and Computer Science, University of Bucharest,\\
\footnotesize Academiei 14,  P.O. Box 010014, Bucharest, Romania\\[0.1cm]
\footnotesize ${}^3$ Simion Stoilow Institute of Mathematics of the Romanian Academy,\\
\footnotesize P. O. Box 1-764, 014700 Bucharest, Romania\\[0.1cm]
\footnotesize ${}^4$ Department of Mathematics, Babe\c{s}-Bolyai University, \\
\footnotesize  Kog\u{a}lniceanu 1, 400084 Cluj-Napoca, Romania\\[0.1cm]
\footnotesize ${}^5$ Simion Stoilow Institute of Mathematics of the Romanian Academy, \\
\footnotesize Research group of the project PD-3-0152,\\
\footnotesize P. O. Box 1-764, 014700 Bucharest, Romania\\[0.1cm]
\footnotesize E-mails: kohlenbach@mathematik.tu-darmstadt.de, Laurentiu.Leustean@imar.ro, \\
\footnotesize \!\!\!\!\!\!\ anicolae@math.ubbcluj.ro
}

\date{}

\maketitle

\begin{abstract}
We provide in a unified way quantitative forms of 
strong convergence results for numerous iterative procedures which 
satisfy a general type of Fej\'er monotonicity where the convergence 
uses the compactness of the underlying set. These quantitative versions 
are in the form of 
explicit rates of so-called metastability in the sense of T. Tao.
Our approach covers examples ranging from the proximal 
point algorithm for maximal monotone operators to various fixed point 
iterations $(x_n)$ for firmly nonexpansive, asymptotically nonexpansive, 
strictly pseudo-contractive and other types of mappings. Many of the results 
hold in a general metric setting with some convexity structure added 
(so-called $W$-hyperbolic spaces). Sometimes uniform convexity is assumed still covering the important class of CAT(0)-spaces due to Gromov.
\end{abstract}
{\bf Keywords:} Fej\'er monotone sequences, quantitative convergence, 
metastability, proximal point algorithm, firmly nonexpansive mappings, 
strictly pseudo-contractive mappings, proof mining.

\section{Introduction}
This paper provides in a unified way quantitative forms of 
strong convergence results for numerous iterative procedures which 
satisfy a general type of Fej\'er monotonicity where the convergence 
uses the compactness of the underlying set. Fej\'er monotonicity is 
a key notion employed in the study of many problems in convex optimization and programming, 
fixed point theory and the study of (ill-posed) inverse problems 
(see e.g. \cite{Vasin,Combettes}). These quantitative forms have been 
obtained using the logic-based proof mining approach (as developed 
e.g. in \cite{Kohlenbach(book)}) 
but the results are presented here in a way which avoids any 
explicit reference to notions or tools from logic. \\ 
Our approach covers examples ranging from the proximal 
point algorithm for maximal monotone operators to various fixed point 
iterations $(x_n)$ for firmly nonexpansive, asymptotically nonexpansive, 
strictly pseudo-contractive and other types of mappings. Many of the results 
hold in a general metric setting with some convexity structure added 
(so-called $W$-hyperbolic spaces in the sense of \cite{Koh05a}). 
Sometimes uniform convexity is assumed still covering Gromov's 
CAT(0)-spaces. \\[1mm] 
For reasons from computability theory, effective rates of convergence for 
$(x_n)$ in $X$ are usually ruled out even when the space $X$ 
in question and the map $T$ used in the iteration are effective: usually 
$(x_n)$ will converge to a fixed point of $T$ but in general $T$ will not
possess a computable fixed point and even when it does (e.g. when $X$ is 
$\R^n$ and the fixed point set is convex) the usual iterations will not 
converge to a computable point and hence will not converge with an effective 
rate of convergence (see \cite{Neumann} for details on all this). \\[1mm]
The Cauchy property of $(x_n)$ can, however, be reformulated in the 
equivalent form 
\[ (*)\quad \forall k\in\N\,\forall g:\N\to\N \,\exists N\in\N\,\forall i,j\in 
[N,N+g(N)]\ \left( d(x_i,x_j) \le\frac{1}{k+1}\right)\]
and for this form, highly uniform computable bounds $\exists N\le\Phi(k,g)$ 
on $\exists N$ can be obtained. $(*)$ is known in mathematical logic 
since 1930 as Herbrand normal form and bounds $\Phi$ have been studied in 
the so-called Kreisel no-counterexample interpretation (which in turn is a 
special case of the G\"odel functional interpretation) since the 50's 
(see \cite{Kohlenbach(book)}).
More recently, $(*)$ has been made popular under the name of `metastability' 
by Terence Tao, who used the existence of uniform bounds on $N$ in 
the context of ergodic theory (\cite{Tao07,Tao08}. Moreover, 
Walsh \cite{Wal12} used again metastability 
to show the  $L^2$-convergence of multiple polynomial ergodic averages arising from 
nilpotent groups of measure-preserving transformations. 
\\[1mm] In nonlinear analysis, rates of metastability 
$\Phi$  
for strong convergence results of nonlinear iterations have been first 
considered and extracted in \cite{KohLam04,Koh05} (and in many 
other cases since then). The point of departure of our investigation is \cite{Koh05} which uses Fej\'er
monotonicity and where some of the arguments of the present paper have first been
used in a special context. \\[1mm] 
Let $F\subseteq X$ be a subset 
of $X$ and recall that $(x_n)$ is Fej\'er monotone w.r.t. $F$ if 
\[ (+) \quad d(x_{n+1},p)\le d(x_n,p), \ \mbox{for all $n\in\N$ and $p\in F$}. \]
We think of $F$ as being the intersection $ F=\bigcap_{k\in\N} AF_k$ 
of approximations $AF_{k+1}\subseteq AF_k\subseteq X$ to $F,$ one 
prime example being $F:=Fix(T)$ and $AF_k:=\{ p\in X\,|\,d(p,Tp)
\le 1/(k+1)\},$ where $Fix(T)$ denotes the fixed point set of some 
selfmap $T:X\to X.$ \\[1mm]
The key notion in this paper is that of a modulus of uniform 
Fej\'er monotonicity i.e. a bound $\exists k\le \chi(r,n,m)$ 
for the following uniform strengthening of `Fej\'er monotone'
\[ \forall r,n,m\in\N\exists k\in\N\forall p\in X\left( 
p\in AF_k\to\forall l\le m \left( d(x_{n+l},p)<d(x_n,p)+\frac{1}{r+1}
\right)\right). \]
If $X$ is compact and $F$ satisfies an appropriate closedness condition 
w.r.t. the sets $AF_k$, 
then `Fej\'er monotone' and `uniform Fej\'er monotone' are equivalent. 
However, moduli $\chi$ for uniform Fej\'er monotonicity can be extracted 
(based on results from logic) 
also in the absence of compactness, provided that the proof of the Fej\'er 
monotonicity 
is formalizable in a suitable context, and we provide such moduli 
$\chi$ in all our applications.  
\\[1mm] 
If $X$ is compact, $F$ satisfies some explicit closedness condition w.r.t. 
$AF_k$  
(Definition \ref{explicit-closed}) and $(x_n)$ (in addition to being Fej\'er 
monotone) possesses approximate $F$-points, i.e. 
\[ (**)\quad \forall k\in\N\,\exists n\in\N \,(x_n\in AF_k), \] 
then $(x_n)$ converges to a point in $F$ 
(see Proposition \ref{general-GH-fejer} and the 
remark thereafter). 
\\[2mm]
The main 
general quantitative theorem in our paper (Theorem \ref{main-step-general-GH-fejer}) 
transforms (given $k,g$) any 
modulus of total boundedness $\gamma$ (a quantitative way to express the 
total boundedness of $X$, see Section \ref{logic-section}), 
any bound $\Phi$ on $(**)$ and any modulus $\chi$ 
of uniform Fej\'er monotonicity into a rate $\Psi(k,g,\Phi,\chi,\gamma)$ 
of metastability $(*)$ of $(x_n).$ If, moreover, $F$ is uniformly closed 
w.r.t. $AF_k$ (Definition 
\ref{uniformly-closed}), 
which e.g. is the case when $F$ and $AF_k$ are, respectively, the fixed point and the $1/(k+1)$-approximate fixed point set of a uniformly continuous mapping $T$, 
then one can also arrange that all the points in the interval of 
metastability $[N,N+g(N)]$ belong to $AF_k$ (Theorem  
\ref{finitization-general-GH-Fejer}). \\[1mm] 
$\Psi$ is the $P$-times iterate of 
a slightly massaged (with $\chi,\Phi$) version of $g,$ where $P$ only depends 
on $\gamma,k$ (but not on $g$). In particular, this yields that a rate 
of convergence for $(x_n)$ (while not being 
computable) is effectively learnable 
with at most $P$-many mind changes and a learning strategy which - essentially 
- is $\Phi\circ\chi$ (see \cite{Kohlenbach-Safarik} for more on this). That 
a primitive recursive iteration of $g$ is unavoidable follows from the 
fact that even for most simple cases of Fej\'er monotone fixed point 
iterations $(x_n)$ in $[0,1]$ 
the Cauchy property of $(x_n)$ implies the Cauchy property of monotone 
sequences in $[0,1]$ (see \cite{Neumann}) 
which is equivalent to $\Sigma^0_1$-induction 
(\cite{Kohlenbach(PRA)}(Corollary 5.3)). See also the example at the 
end of section \ref{section-quantitative}. 
\\   
A variant of Theorem \ref{finitization-general-GH-Fejer} holds 
even without any closedness assumption, if $(x_n)$ 
not only possesses $AF_k$-points for every $k$ but is 
asymptotic regular 
\[ \forall k\in\N\,\exists n\in\N\,\forall m\ge n\ \left( 
x_m\in AF_k\right)\] with a rate of metastability 
$\Phi^+$ for this property instead of the approximate $F$-point bound 
$\Phi$ (Theorem \ref{Cauchy+as-reg-metastability}). \\[1mm] 
In all these results we actually permit a more general form of 
Fej\'er-monotonicity, where instead of $(+)$ one has 
\[ (++)\quad H(d(x_{n+m},p))\le G(d(x_n,p)), \ \mbox{for all $n,m\in\N$ and $p\in F$}. \]
and $G,H:\R_+\to\R_+$ are subject to very general conditions (this e.g. 
is used in the application to asymptotically nonexpansive mappings). 
\\[1mm] 
As is typical for such quantitative `finitizations' of noneffective 
convergence results, it is easy to incorporate a summable sequence 
$(\varepsilon_n)$ of error terms in all the aforementioned results 
which covers the important concept of `quasi-Fej\'er-monotonicity' due to 
\cite{Ermolev}
(see Section \ref{section-quasi}). 
As a consequence of this, one can also incorporate 
such error terms in the iterations we are considering in this paper. 
However, for the sake of better readability we will not carry this out 
in this paper (but see \cite{Kohlenbach2015} for an application of this to  
convex feasibility problems in CAT($\kappa$)-spaces).  
\\[1mm] The results mentioned so far hold for arbitrary sets $F=\bigcap_kAF_k$ 
provided that we have the various moduli as indicated. In the case where 
$AF_k$ can be written as a purely universal formula and we have - 
sandwiched in between $AF_{k+1}\subseteq \tilde{AF}_{k}\subseteq AF_k$ - the sets 
$\tilde{AF}_k$ which are given by a purely existential formula 
(which is the case for $AF_k=\{ p\in X \mid d(p,Tp)\le 1/(k+1)\}$ with 
$\tilde{AF}_k=\{ p\in X \mid  d(p,Tp)<1/(k+1)\}$), then the logical 
metatheorems from \cite{Koh05a,GerKoh08,Kohlenbach(book)} {\bf guarantee} 
the extractability of explicit and highly uniform moduli $\chi$ from 
proofs of (generalized) Fej\'er monotonicity, as well as approximate 
fixed point bounds or metastability rates for asymptotic regularity 
from proofs of the corresponding properties if these proofs can be carried 
out in suitable formal systems as in all our applications. 
\\[1mm] The paper is organized as follows: in Section \ref{logic-section} 
we discuss the background from mathematical logic, i.e. so-called logical 
metatheorems (due to the first author in \cite{Koh05a}, see also 
\cite{GerKoh08,Kohlenbach(book)}) 
which provide tools for the extraction of highly uniform 
bounds from prima facie noneffective proofs of $\forall\exists$-theorems 
(which covers the case of metastability statements). Since our present paper
uses the context of totally bounded metric spaces we discuss this case 
in particular detail. Applying proof mining to a concrete proof 
results again in an ordinary proof in analysis and so one can read the proofs 
in this paper without any knowledge of logic which, however, was used 
by the authors to find these proofs. In Sections \ref{section-approximate} and 
\ref{section-Fejer} we develop the basic definitions and facts about 
the sets $F,AF_k,$ the notions of explicit and uniform closedness as 
well as (uniform) generalized $(G,H)$-Fej\'er monotone sequences. In Section 
\ref{section-quantitative} we establish our main general quantitative theorems 
which then will be specialized in our various applications. Section 
\ref{section-quasi} generalizes these results to the case of (uniform) 
quasi-Fej\'er monotone sequences. In Section \ref{section-F-FT} we interpret 
our results in the case where $F$ is the fixed point set of a selfmap 
$T$ (mostly of some convex subset of $X$) and provide numerous applications 
as mentioned above: in each of these cases we provide appropriate moduli 
of uniform (generalized) Fej\'er monotonicity $\chi$ and approximate 
fixed bounds bounds $\Phi$ (usually even rates of asymptotic regularity or 
metastable versions thereof) so that our general quantitative theorems can 
be applied resulting in explicit rates of metastability for $(x_n).$ In the 
case of CAT(0)-spaces (resp. Hilbert spaces), these $\Phi$'s become 
quadratic in the error $1/(k+1).$ In 
Section \ref{maximal-monotone} we do the same for the case where $F$ is 
the set of zeros of a maximal monotone operator and provide the 
corresponding moduli for the proximal point algorithm. \\[1mm] The 
results in this paper are based on compactness arguments. Without compactness 
one in general has only weak convergence for Fej\'er monotone sequences 
but in important cases weakly 
convergent iterations can be modified to yield strong convergence even 
in the absence of compactness (see e.g. \cite{BauCom10}). 
This phenomenon is known 
from fixed point theory where Halpern-type variants of the weakly convergent 
Mann iteration yield strong convergence 
(\cite{Browder(67),Halpern(67),Wittmann(92)}). Even when only weak convergence 
holds one can apply the logical machinery to extract rates of metastability 
for the weak Cauchy property (see e.g. \cite{Kohlenbach(Baillon)} 
where this is done in the case 
of Baillon's nonlinear ergodic theorem). However, the bounds will be 
extremely complex. If, however, weak convergence is used only as 
an intermediate step towards strong convergence, one can often avoid the 
passage through weak convergence altogether and obtain much simpler rates 
of metastability (see e.g. \cite{Kohlenbach(Browder)} where this has been carried out in 
particular for Browder's classical strong convergence theorem of 
the resolvent of a nonexpansive operator in Hilbert spaces, as well as 
\cite{Koernlein/Kohlenbach14}). We believe 
that it is an interesting future research project to adapt these techniques 
to the context of Fej\'er monotone sequences. 
\\[2mm] {\bf Notations:} $\N$ and $\N^*$ denote the set of natural numbers 
including $0$ resp. without $0$ and $\R_+$ are the nonnegative reals.

\section{Quantitative forms of compactness}\label{logic-section}

Let $(X,d)$ be a metric space. We denote with $B(x,r)$ (resp. $\ol{B}(x,r)$)  the open (resp. closed) ball with center $x\in X$ and radius $r>0$. 

Let us recall that  a nonempty subset $A\se X$ is {\em totally bounded} if for every $\eps>0$ there exists an $\eps$-net of $A$, i.e. there are $n\in\N$ and 
$a_0,a_1\ldots, a_n\in X$ such that $A\se \bigcup_{i=0}^nB(a_i,\eps)$. This is equivalent with the existence of a $1/(k+1)$-net for every $k\in\N$.

\begin{definition}
Let $\emptyset \ne A\se X$. We call $\alpha:\N\to\N$ a {\em I-modulus of total boundedness} for 
$A$ if for every $k\in\N$ there exist elements $a_0,a_1,\ldots, a_{\alpha(k)}\in X$ such that 
\beq
\forall x\in A\,\exists \,0\le i\le \alpha(k)\,\left(d(x,a_i)\le \frac1{k+1} \right). \label{def-modulus-tot-I}
\eeq
\end{definition}
Thus, $A$ is totally bounded iff $A$ has a I-modulus of total boundedness. In this case, we also say that 
$A$ is totally bounded with I-modulus $\alpha$. One can easily see that any totally bounded set is bounded: 
given a I-modulus $\alpha$ and $a_0, \ldots, a_{\alpha(0)}\in X$ such that \eqref{def-modulus-tot-I} is satisfied for $k=0$, 
$b:=2+\max\{d(a_i,a_j)\mid 0\le i,j\le \alpha(0)\}$ is an upper bound on the diameter of $A$.\\[1mm] 
We now give an alternative characterization of total boundedness used in the context of proof mining first in \cite{Ger08}:

\begin{definition}
Let $\emptyset \ne A\se X$.  We call $\gamma:\N\to\N$ a {\em II-modulus of total boundedness} for 
$A$ if for any $k\in\N$ and for any sequence $(x_n)$ in $A$  
\beq
\exists \,0\leq i<j\le \gamma(k)\,\left(d(x_i,x_j)\le \frac1{k+1}\right). \label{def-modulus-tot-II}
\eeq
\end{definition}

\begin{remark} The logarithm of the smallest possible value for a I-modulus 
of total boundedness is also called the $1/(k+1)$-entropy of $A$ while the 
logarithm of the optimal II-modulus is called the $1/(k+1)$-capacity 
of $A$ (see e.g. \cite{Lorentz}).
\end{remark}

\begin{proposition}\label{prop-mtb}
Let $\emptyset \ne A\se X$.
\be
\item\label{mtb-I-II} If  $\alpha$ is a I-modulus of total boundedness for $A$, then 
$\gamma(k):=\alpha(2k+1)+1$ is a II-modulus of total boundedness for $A$.
\item\label{mtb-II-I} If $\gamma$ is a II-modulus of total boundedness for $A$, then $\alpha(k):=\gamma(k)-1$ 
is a I-modulus of total boundedness (so, in particular, $A$ is totally bounded).  
\ee
\end{proposition}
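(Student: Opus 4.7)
My plan is to treat the two parts as dual statements: a pigeonhole argument for (i) and a greedy maximal-separation argument for (ii).

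For part (\ref{mtb-I-II}), fix $k \in \N$ and a sequence $(x_n)$ in $A$. I would apply the I-modulus at the finer scale $2k+1$, obtaining centers $a_0, \ldots, a_{\alpha(2k+1)} \in X$ such that every point of $A$ lies within $\frac{1}{2k+2}$ of some $a_i$. Now consider the first $\alpha(2k+1) + 2$ terms $x_0, x_1, \ldots, x_{\alpha(2k+1)+1}$. By pigeonhole, two of them (with indices $0 \le i < j \le \alpha(2k+1)+1 = \gamma(k)$) fall in the same ball $\ol{B}(a_\ell, \tfrac{1}{2k+2})$, so by the triangle inequality $d(x_i, x_j) \le \frac{2}{2k+2} = \frac{1}{k+1}$. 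This is exactly the required inequality.

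For part (\ref{mtb-II-I}), set $\alpha(k) := \gamma(k) - 1$ and construct a $\frac{1}{k+1}$-net greedily. Pick any $a_0 \in A$; having chosen $a_0, \ldots, a_m \in A$ that are pairwise at distance $> \frac{1}{k+1}$, if every $x \in A$ satisfies $d(x, a_i) \le \frac{1}{k+1}$ for some $i \le m$, stop; otherwise pick such an $a_{m+1} \in A$ at distance $> \frac{1}{k+1}$ from all previous centers. The key claim is that this procedure terminates with $m \le \alpha(k)$. Suppose instead that we had picked $a_0, \ldots, a_{\gamma(k)} \in A$ (that is, $\gamma(k)+1$ points) pairwise at distance $> \frac{1}{k+1}$. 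Define a sequence $(x_n)$ in $A$ by $x_n := a_n$ for $n \le \gamma(k)$ and $x_n := a_0$ otherwise; applying the II-modulus would yield $0 \le i < j \le \gamma(k)$ with $d(x_i, x_j) = d(a_i, a_j) \le \frac{1}{k+1}$, contradicting separation. Hence the process halts after at most $\alpha(k)+1 = \gamma(k)$ centers, and by the stopping condition these form a $\frac{1}{k+1}$-net, so $\alpha$ is a I-modulus. (If $A$ has fewer than $\alpha(k)+1$ points the construction terminates even sooner and we may pad with repetitions.)

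Neither part poses a real obstacle; the mild subtlety is on the counting side. In (i) one must remember that the II-modulus statement quantifies over $\gamma(k)+1$ indices $\{0, \ldots, \gamma(k)\}$, which is why the pigeonhole works with $\gamma(k) = \alpha(2k+1)+1$ centers rather than $\alpha(2k+1)$. In (ii) one must formulate the contradiction against a \emph{sequence} (not just a finite tuple), which is why the padding by $a_0$ is needed to invoke the II-modulus definition. The ``totally bounded'' parenthetical in (ii) is then immediate, since the constructed centers witness the classical definition with $\eps = \frac{1}{k+1}$.
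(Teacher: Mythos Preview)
Your proof is correct and matches the paper's approach closely. Part (i) is identical to the paper's argument. For part (ii), the paper frames the same greedy construction as a proof by contradiction---assuming $\alpha$ is \emph{not} a I-modulus and then building the separated family $\beta_0,\ldots,\beta_{\gamma(k)}$ inductively---whereas you build the maximal separated set directly and observe that the stopping condition gives the net; the underlying combinatorics (including your padding to turn the finite tuple into a sequence) is the same.
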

\begin{proof}
\be
\item   Let $a_0, \ldots, a_{\alpha(2k+1)}\in X$ be such that \eqref{def-modulus-tot-I} is satisfied, hence  for all $x\in A$ there exists $0\le i\le \alpha(2k+1)$ such that
$\ds d(x,a_i)\le \frac1{2k+2}$. Applying the pigeonhole principle to $x_0,x_1,\ldots, x_{\alpha(2k+1)+1}$, we get $0\le i<j\le \alpha(2k+1)+1$, 
 such that $x_i$ and $x_j$ are in a ball of radius 
$\ds  \frac1{2k+2}$ around the same $a_l$ with $0\le l\le \alpha(2k+1)$. It follows that $\ds d(x_i,x_j)\le \frac{1}{k+1}$, hence \eqref{def-modulus-tot-II} holds.
\item First, let us remark that $\gamma(k)\geq 1$ for all $k$, hence $\alpha$ is well-defined. 
Assume by contradiction that $\alpha(k):=\gamma(k)-1$ is not a I-modulus of total boundedness, i.e. there exists $k\in\N$ such that
\[ (*) \quad \forall a_0,\ldots,a_{\gamma(k)-1}\in X 
\,\exists x\in A\,\forall \,  0\le i\le \gamma(k)-1 \, \left(d(x,a_i)>
\frac{1}{k+1}\right). \]
By induction on $l\le \gamma(k)$ we show that 
\[ (**)\quad \exists \beta_0,\ldots,\beta_l\in A \,\forall \, 0\le i<j\le l \ 
\left(d(\beta_i,\beta_j)> \frac{1}{k+1}\right),\] 
which, for $l:=\gamma(k)$, contradicts the assumption that 
$\gamma$ is a II-modulus of total boundedness. \\ 
$l=0$: Choose $\beta_0\in A$ arbitrary. \\ 
$l\mapsto l+1\le \gamma(k):$ Let $\beta_0,\ldots,\beta_l$ be as in 
$(**).$ By $(*)$ applied to 
\[a_i:=\begin{cases} \beta_i & \text{ if } i\le l\\
        \beta_l & \text{ if } l<i\le \gamma(k)-1
       \end{cases}
\]
we get $x\in A$ such that $\ds d(x,\beta_i) > \frac{1}{k+1}$  for all $i\le l$. Then $\beta_0,\ldots,\beta_l,\beta_{l+1}:=x$ satisfies 
$(**).$
\ee
\end{proof}

Note that the existence of a $1/(k+1)$-net in the proof of Proposition \ref{prop-mtb}.\eqref{mtb-II-I} is noneffective. In particular, there is no 
effective way to compute a bound on $A$ from a II-modulus of total boundedness. This seemingly disadvantage actually will allow us to 
extract bounds of greater uniformity from proofs of statements which do not explicitly refer to such a bound (see below).

\subsection{General logical metatheorems for totally bounded metric 
spaces}\label{logical-meta-tb-metric}

In \cite{Koh05a}, the first author introduced so-called logical metatheorems 
for bounded metric structures (as well as for normed spaces and other 
classes of spaces).\footnote{In this discussion we focus on the case 
of metric spaces.} Here systems 
${\cal T}^{\omega}$ of arithmetic and analysis in the language of functionals 
of all finite types are extended by an abstract metric space $X$ whose metric is supposed to be bounded by $b\in\NN$ resulting 
in a system ${\cal T}^{\omega}[X,d]$. 
Consider now a  ${\cal T}^{\omega}[X,d]$-proof 
of a theorem of the following 
form, where $P$ is some concrete complete separable metric space and 
$K$ a concrete compact metric space:\footnote{For simplicity, we only consider here 
some special case. For results in full generality see \cite{Koh05a,GerKoh08,
Kohlenbach(book)}.}
 \[(+) \quad \left\{ \ba{l} \forall u\in P\,\forall v\in K\,\forall x\in X\, \forall 
 y\in X^{\NN}\, 
\forall\, T:X\to X\ \\[1mm] \hspace*{1cm} 
(A_{\forall} (u,v,x,y,T)\to \exists n\in\NN\,
B_{\exists}(u,v,x,y,T)),\ea \right.\]
where $A_{\forall},B_{\exists}$ are purely universal resp. purely existential 
sentences (with some restrictions on the types of the quantified variables). 
Then from the proof one can extract (using a method from proof theory
called monotone functional interpretation,  due to the first author, see 
\cite{Kohlenbach(book)} for details on all this)
a computable uniform bound `$\exists n\le 
\Phi(f_u,b)$' on `$\exists n\in
\NN$' which only depends on some representation $f_u$ of $u$ in $P$ and a bound $b$ 
of the metric. In particular, $\Phi$ does not depend on $v,x,y,T$ 
nor on the space $X$ 
(except for the bound $b$). In most of 
our applications $P$ will be $\NN$ or $\NN^{\NN}$ (with the 
discrete and the Baire metric, respectively) in which case $u=f_u.$ 
In the cases $\RR$ or $C[0,1]$, however, $f_u$ is some concrete fast 
Cauchy sequence (say of Cauchy rate $2^{-n}$) of rationals representing $u\in \RR$ resp. a pair 
$(f,\omega)$ with $f\in C[0,1]$ and some modulus of uniform continuity 
$\omega$ for $f$ in the case of $C[0,1].$ $f_u$ can always be encoded 
into an element of $\NN^{\NN}.$ \\[1mm]
$\Phi$  
has some restricted subrecursive complexity 
which reflects the strength of the mathematical axioms from 
${\cal T}^{\omega}$ used in the proof. 
In most applications, $\Phi$ is at most of so-called primitive recursive complexity.\\[1mm]
As discussed in \cite{GerKoh08} and \cite[Application 18.16, p. 464]{Kohlenbach(book)}, the formalization of 
the total boundedness of $X$ via the existence of a I-modulus of total 
boundedness $\alpha$ can be incorporated in this setting as follows: 
in order to simplify the logical structure of the axiom to be 
added it is convenient to combine all the individual $\varepsilon$-nets 
$a_0,\ldots,a_{\alpha(\varepsilon)}$ into one single sequence $(a_n)$ 
of elements in $X$ and to replace the quantification over $\varepsilon >0$ 
by quantification over $\NN$ via $ \varepsilon:=1/(n+1):$ 
\\[1mm] The theory $\cT^{\omega}[X,d,TOTI]$ of totally bounded metric spaces 
is obtained by adding to $\cT^{\omega}[X,d]$
\be
\item two constants $\alpha^{\N\to\N}$ and $a^{\N\to X}$ denoting a function
$\NN\to\NN$ and a sequence $\NN\to X,$ respectively, as well as \and 
\item one universal axiom:\footnote{The bounded number quantifier can be 
easily eliminated by bounded collection.}
\[
(TOT \,I)\quad \forall k^\N\forall x^X\exists N\le_\N \alpha(k)\,\left(d_X(x,a_N)\leq_\R \frac{1}{k+1}\right).
\]
\ee
It is obvious that $(TOT \,I)$ implies that $\alpha$ is a I-modulus of 
total boundedness of $X$ as defined before. 
Conversely, suppose $\alpha$ is 
such a modulus. Then $\alpha'(n):=\sum^n_{i=0}(\alpha(i)+1)$ 
satisfies $(TOT \,I)$ for the sequence $(a_n)$ obtained as the concatenation of the 
$1/(k+1)$-nets $a_0^k,\ldots,a_{\alpha(k)}^k$, $k=0,1,\ldots$.\\[1mm]
\details{Since $\alpha$ is a I-modulus, for every $n\in\N$ there exists a $1/(n+1)$-net $a_0^n,\ldots, a_{\alpha(n)}^n$. 
Let $a:\N\to X$ be defined by the concatenation of the finite sequences $a_0^n,\ldots, a_{\alpha(n)}^n$, $n\in\N$. Let $k\in\N$ and $x\in X$. 
By \eqref{def-modulus-tot-I}, there exists $0\leq P\leq \alpha(k)$ such that $\ds d(x,a_P^k)\leq \frac1{k+1}$. We have that 
\[a_P^k=a\left(\sum_{i=0}^{k-1}(\alpha(i)+1)+P+1\right).\]
Let $N:=\sum_{i=0}^{k-1}(\alpha(i)+1)+P+1$. Then \
\[N\leq \sum_{i=0}^{k-1}(\alpha(i)+1) + (\alpha(k)+1)=\alpha'(k).\]
}
Since $(TOT \,I)$ is purely universal, 
its addition does not cause any problems and the only change caused by 
switching from $\cT^{\omega}[X,d]$ to $\cT^{\omega}[X,d,TOTI]$ is that the 
extracted bound $\Phi$ will additionally depend on $\alpha$ (see 
\cite{Kohlenbach(book)} for details).\\[1mm] 
In \cite{GerKoh08}, the results from \cite{Koh05a} are extended 
to the case of unbounded metric spaces. Then the bound 
$\Phi$ depends, instead of $b$, on majorizing data $x^*\gtrsim^p_X x, \,y^*\gtrsim^p_{\NN\to X}y,\, 
T^*\gtrsim^p_{X\to X} T$ for $x,y,T$ relative to 
some reference point $p\in X$ (which usually will be identified with 
$x$). More precisely, the $p$-majorizability relation $\gtrsim^p$ is defined (for the cases at hand 
which are special cases of a general inductive definition for all function 
types over $\NN,X$ interpreted here over the full set-theoretic type 
structure, see \cite{Kohlenbach(book)}) as follows: 
\[ \ba{l} n^*\gtrsim^p_{\NN} n:=n^*,n\in\NN\wedge n^*\ge n, \\[1mm] 
\alpha^* \gtrsim^p_{\NN\to\NN} \alpha:=\alpha^*,\alpha\in\NN^{\NN}\wedge 
\forall n^*,n (n^*\ge n\to \alpha^*(n^*)\ge \alpha^*(n),\alpha(n)), \\[1mm] 
x^*\gtrsim^p_X x:= x^*\in\NN, x\in X\wedge x^*\ge d(p,x), \\[1mm] 
y^*\gtrsim^p_{\NN\to X} y:=y^*\in \NN^{\NN}, y\in X^{\NN}\wedge 
\forall n^*,n\in\NN \, (n^*\ge n\to y^*(n^*)\ge d(p,y(n))), \\[1mm]  
T^*\gtrsim^p_{X\to X} T:=T^*\in\NN^{\NN}, T\in X^X \wedge \\ \hspace*{2cm}
\forall n\in\NN\,\forall x\in X (n\ge d(p,x) \to T^*(n)\ge d(p,T(x)). \ea \] 
Note that $\gtrsim^p_{\NN}$ and $\gtrsim^p_{\NN\to\NN}$ 
actually do not depend on $p$, hence we shall denote them simply $\gtrsim_{\NN}$ and $\gtrsim_{\NN\to\NN}$, respectively.
Whereas a majorant $y^*$ exists for any sequence $y$ in $X$, it is a genuine 
restriction on $T$ to posses a majorant $T^*$. However, for large 
classes of mappings $T$ one can construct $T^*$, e.g. this is the 
case when $T$ is 
Lipschitz continuous (in the case of geodesic spaces also uniform 
continuity suffices) but also in general whenever $T$ maps bounded 
sets to bounded sets. 
\details{ Assume that $T$ maps bounded sets to bounded sets. Define $T^*:\N\to\N$ as 
follows $T^*(n)=\sup\{d(p,T(x))\mid d(x,p)\leq n\}$. Then $T$ is well defined, since 
$T(B(p,n))$ is bounded, hence there is $M\in\R$ such that $M\geq d(p,T(x))$ for all $x\in B(a,n)$. }
\\[1mm]
It is instructive to see what happens if we take the context of 
{\bf unbounded} metric spaces, i.e. - using the terminology 
from \cite{GerKoh08,Kohlenbach(book)} - ${\cal T}^{\omega}[X,d]_{-b}$ 
and add constants $\alpha:\NN\to\NN$ and 
$(a_n):\NN\to X$ as before. Then we need to provide majorants $\alpha^*,
a^*$ for these 
objects, which in the case of $\alpha$ can be simply done by stipulating 
$\alpha^*(n):=\max\{ \alpha(i) \mid \le n\},$ whereas for $(a_n)$ this requires 
- as above - a function $a^*:\NN\to\NN$ such that $a^*\gtrsim^p_{\NN\to X} a$. 
Then the bound $\Phi$ extractable from proofs of theorems of 
the form considered above will additionally also depend on $\alpha^*$ 
(i.e. on $\alpha$) and $a^*$. From these data one can easily compute a 
bound $b$ on $X$ (e.g. we may take $b:=2+2a^*(\alpha^*(0))$)
\details{We have that 
$$a^*(\alpha^*(0))=a^*(\alpha(0))=\max\{d(x,a_k)\mid k\leq \alpha(0)\}.$$
Let $y,z\in X$. Applying (TOT I) with $k=0$ we get $0\leq N_y, N_z\leq 1$ such that 
$d(y,a_{N_y}), d(z,a_{N_z})\leq 1$. It follows that 
\bua
d(y,z)\leq d(y,a_{N_y})+d(z,a_{N_z})+d(a_{N_y},x)+d(x,a_{N_z})\leq 2+ 2a^*(\alpha^*(0)). 
\eua
}
and, conversely, given such a bound 
$b$ one can simply take $a^*(n):=b$. So, adding $(TOT \,I)$ gives in both 
contexts the same results w.r.t. the extractability of bounds $\Phi$ and 
their uniformity. 
This situation, however, changes if we consider the axiomatization 
based on the II-modulus of total boundedness in the setting of unbounded 
metric structures: \\[1mm] 
The theory $\cT^{\omega}[X,d,TOTII]_{-b}$ 
of totally bounded metric spaces is obtained by adding to $\cT^{\omega}[X,d]_{-b}$
\be
\item one constant $\gamma^{\N\to\N}$  and 
\item one universal axiom: 
\[
(TOT\, II)\quad \forall k^\N\forall x^{\N\to X}\exists I,J\le_\N \gamma(k)\,\left(I<_\N J \,\wedge\, d_X(x_I,x_J)\leq_\R \frac1{k+1} \right).
\]
\ee
Due to the absence of the sequence $(a_n)$ from this axiomatization, the 
extracted bounds will only depend on $\gamma$ instead of $\alpha,a^*$ 
(or $\alpha,b$). This results in a strictly greater uniformity of the bounds 
as the following example shows.\\[1mm]
Consider the sequence $(X,d_n)$ of metric spaces defined as follows: 
$$X:=\{ 0,1\}, \ d_n(0,1):=d_n(1,0):=n, \ d_n(0,0)=d_n(1,1)=0.$$ 
It is easy to see that $\gamma(n):=2$ is a common II-modulus of total 
boundedness for all the spaces $(X,d_n)$ (since any sequence of 3 
elements of $X$ has to repeat some element), 
while the diameter of $(X,d_n)$ tends 
to infinity as $n$ does. Hence our bounds $\Phi$ will be uniform for all 
the spaces $(X,d_n)$ which first might look impossible since, 
after all, $(TOT\, II)$ {\bf does} imply that $X$ is bounded, i.e. 
\[ (++)\quad \exists b\in\NN\,\forall x,y\in X \, (d(x,y)< b).\]
However, 
$(++)$ is of the form $\exists\forall$, which is not allowed in statements 
of the form $(+)$ considered above. Noneffectively, $(++)$ can be 
equivalently reformulated as 
\[ (++)' \quad \forall (x_n),(y_n)\in X^{\NN} \,\exists N\in\NN \, (d(x_N,y_N)< N),\] 
which {\bf is} of the form $(+)$, so that the aforementioned uniform 
bound extraction applies (given majorants $x^*,y^*$ for $(x_n),(y_n)$). Indeed, 
define recursively 
\[ n_0:=0, \quad n_{k+1}:=\left\lceil\max_{i,j\le k} \{ n_k,d(x_{n_i},y_{n_j}),
d(x_{n_i},x_{n_j}),d(y_{n_i},y_{n_j})\} +3\right\rceil, \] 
which can easily be effectively bounded using only $d$ and $x^*,y^*$.

\begin{proposition}
For any metric space $X$ with II-modulus of total boundedness $\gamma$ we have: 
\[ \exists N\le n_{\gamma(0)} \,(d(x_N,y_N)< N). \] 
\end{proposition}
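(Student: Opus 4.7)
The plan is a proof by contradiction: assume $d(x_N,y_N)\ge N$ for every $N\le n_{\gamma(0)}$, so in particular $d(x_{n_k},y_{n_k})\ge n_k$ for each $k\le \gamma(0)$.

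Next I would apply the II-modulus $\gamma$ of $X$ at precision $1$ separately to the sequences $(x_{n_k})_k$ and $(y_{n_k})_k$ (extended arbitrarily past $\gamma(0)$), obtaining $0\le I_x<J_x\le\gamma(0)$ with $d(x_{n_{I_x}},x_{n_{J_x}})\le 1$ and $0\le I_y<J_y\le\gamma(0)$ with $d(y_{n_{I_y}},y_{n_{J_y}})\le 1$. Set $M:=\max(J_x,J_y)\le\gamma(0)$, so $I_x,I_y\le M-1$, and take $N:=n_M\le n_{\gamma(0)}$. The key exploitation of the recursive definition is that, for all $i,j\le M-1$,
\[
n_M\ge d(x_{n_i},y_{n_j})+3,\qquad n_M\ge d(x_{n_i},x_{n_j})+3,\qquad n_M\ge d(y_{n_i},y_{n_j})+3,
\]
so every such distance is bounded by $n_M-3$. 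Chaining this with the triangle inequality through the two close pairs should yield a bound on $d(x_{n_M},y_{n_M})$ of the shape $(\text{small})+(n_M-3)+(\text{small})$ that strictly beats $n_M$.

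In the clean case $J_x=J_y=M$, the chain
\[
d(x_{n_M},y_{n_M})\le d(x_{n_M},x_{n_{I_x}})+d(x_{n_{I_x}},y_{n_{I_y}})+d(y_{n_{I_y}},y_{n_M})\le 1+(n_M-3)+1=n_M-1<n_M
\]
immediately contradicts $d(x_{n_M},y_{n_M})\ge n_M$. The hard part, and the main technical obstacle, is the case analysis when $J_x\ne J_y$: then one of the outer triangle terms (say $d(x_{n_M},x_{n_{I_x}})$ when $J_x<M=J_y$) is not immediately $\le 1$, because one endpoint has index $M$ which lies outside the range $i,j\le M-1$ covered by the recursion. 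The remedy is to insert an extra triangle hop through $x_{n_{J_x}}$ (resp.\ $y_{n_{J_y}}$), paying a further "$+1$" from the II-modulus bound and one more recursion-controlled $d(x_{n_i},x_{n_j})\le n_M-3$ term. The crucial structural point is that the $+3$ slack built into the recursion, together with the fact that all three distance types (the $x,x$; $y,y$; and $x,y$ distances) are simultaneously present in the $\max$ defining $n_M$, provides exactly enough margin to absorb these extra hops and still conclude $d(x_{n_M},y_{n_M})<n_M$, yielding the desired contradiction.
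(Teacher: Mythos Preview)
Your argument breaks down in the asymmetric case $J_x\ne J_y$, and the proposed remedy does not close the gap. Take the case $J_x<M=J_y$. The only information you have about the point $x_{n_M}$ is the assumed inequality $d(x_{n_M},y_{n_M})\ge n_M$; neither application of the II-modulus says anything about $x_{n_M}$, because the close $x$-pair lives at indices $I_x<J_x<M$. Hopping through $x_{n_{J_x}}$ as you suggest replaces $d(x_{n_M},x_{n_{I_x}})$ by $d(x_{n_M},x_{n_{J_x}})+1$, but $d(x_{n_M},x_{n_{J_x}})$ is just as uncontrolled: the recursion bound $d(x_{n_i},x_{n_j})\le n_M-3$ requires \emph{both} indices $i,j\le M-1$, and $M\not\le M-1$. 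So no triangle chain starting at $x_{n_M}$ can be closed using only recursion bounds plus the II-modulus pairs you have produced. (Even numerically, any chain picking up a second ``recursion-controlled'' term of size $n_M-3$ already exceeds $n_M$, so the $+3$ slack cannot absorb it.)

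The paper avoids this by a different idea: instead of applying $\gamma$ to the $x$- and $y$-sequences separately, it builds a \emph{single} mixed sequence $z_0,\dots,z_{\gamma(0)}$ with $z_k\in\{x_{n_k},y_{n_k}\}$. Under the contradiction hypothesis one shows that for every $k$ at least one of $x_{n_k}$, $y_{n_k}$ is $>1$ away from \emph{all} previously chosen $z_i$ ($i<k$); otherwise a triangle inequality through two earlier points gives $d(x_{n_k},y_{n_k})\le 2+(n_k-3)<n_k$. Choosing $z_k$ accordingly makes the whole sequence $(z_k)_{k\le\gamma(0)}$ pairwise $1$-separated, and one application of the II-modulus at level $0$ to this single sequence yields the contradiction. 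The key structural difference is that the choice of which coordinate to keep is made \emph{adaptively at each step}, so that the resulting close pair from $\gamma$ automatically involves comparable objects.
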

\begin{proof}
Suppose that $\forall k\le \gamma(0) (d(x_{n_k},y_{n_k})\geq n_k).$ 
Then, for all $k\le\gamma(0),$ one of the two cases 
\[ (1)  \ \forall i< k \,(d(x_{n_{k}},x_{n_i}),d(x_{n_{k}},y_{n_i})>1)\]
or 
\[ (2) \ \forall i< k \,(d(y_{n_{k}},x_{n_i}),d(y_{n_{k}},y_{n_i})>1)\]
holds since, otherwise, 
$d(x_{n_{k}},y_{n_{k}})\le n_{k-1}+2< n_{k}$.
\details{Assume that (1) and (2) do not hold for some $k$. Then there exists $I<k$ such that $d(x_{n_{k}},x_{n_I})\leq 1$ or $d(x_{n_{k}},y_{n_I}) \leq 1$ and there 
exists $J<k$ such that $d(y_{n_{k}},x_{n_J})\leq 1$ or $d(y_{n_{k}},y_{n_J})\leq 1$. We have four cases, but it is enough to consider the following two:
\be
\item $d(x_{n_{k}},x_{n_I})\leq 1$ and $d(y_{n_{k}},x_{n_J})\leq 1$. It follows that
\bua
d(x_{n_{k}},y_{n_{k}}) &\leq & d(x_{n_{k}},x_{n_I})+d(y_{n_{k}},x_{n_J})+d(x_{n_I},x_{n_J})\leq 2+n_{k-1}\leq n_k.
\eua
\item $d(x_{n_{k}},x_{n_I})\leq 1$  and $d(y_{n_{k}},y_{n_J})\leq 1$.
It follows that
\bua
d(x_{n_{k}},y_{n_{k}}) &\leq & d(x_{n_{k}},x_{n_I})+d(y_{n_{k}},y_{n_J})+d(x_{n_I},y_{n_J})\leq 2+n_{k-1}\leq n_k.
\eua
\ee
} 
Define a sequence 
$z_0,\ldots,z_{\gamma(0)}$ as follows: for $k\le \gamma(0)$ put 
$z_k:=x_{n_k}$, if $(1)$ holds, and $z_k:=y_{n_k}$, otherwise (which 
implies that $(2)$ holds). Then $d(z_i,z_j)>1$ whenever $0\le i<j\le 
\gamma(0)$ which, however, contradicts the definition of $\gamma.$ 
Hence $\exists k\le\gamma(0) \ (d(x_{n_k},y_{n_k}) < n_k).$ Since 
$n_k\le n_{\gamma(0)},$ the claim follows.
\end{proof}
\begin{remark}
As mentioned already, logical metatheorems of the form discussed above have 
also been established for more enriched structures such as W-hyperbolic 
spaces, uniformly convex W-hyperbolic spaces, $\R$-trees, $\delta$-hyperbolic 
spaces (in the sense of Gromov) and CAT(0)-spaces as well 
normed spaces, uniformly convex normed spaces, complete versions of 
these spaces and Hilbert spaces. Most recently, also abstract $L^p$- and 
$C(K)$-spaces have been covered (\cite{GueKoh14}). 
In the normed case, the reference point 
$p\in X$ used in the majorization relation will always be the zero vector 
$0_X$ (see \cite{GerKoh08,Koh05a,Kohlenbach(book),Leu06,Leu07,GueKoh14} 
for all this). In all these cases one can add the requirement of $X$ (or 
of some bounded subset in the normed case) to be totally bounded with 
moduli of total boundedness in the form I or II as above. Thus the 
applications given in this paper can be viewed as instances of corresponding 
logical metatheorems. 
\end{remark}

\subsection{Examples}
In this subsection we give simple examples of II-moduli of total boundedness that are computed explicitly. Although some of the proofs are straightforward we include them for completeness.

\begin{example}\label{ex-tb-01}
Let $A=[0,1]$ be the unit interval in $\R$. Then $\gamma:\N \to \N$, $\gamma(k)= k+1$ is a II-modulus of total boundedness for $A$.
\end{example}
\begin{proof}
Let $k \in \N$ and $(x_n)$ be a sequence in $A$. Divide the interval $[0,1]$ into $k+1$ subintervals of equal length $1/(k+1)$. Applying the pigeonhole principle we obtain that there exist $0 \le i < j \le k+1$ such that $|x_i - x_j|\le 1/(k+1)$.
\end{proof}

\begin{example}\label{ex-bd-set-Rn}
Let $A$ be a bounded subset of $\mathbb{R}^n$ and $b > 0$ be such that $\|a\|_2 \le b$ for every $a \in A$. Then $\gamma:\N \to \N$, $\ds\gamma(k) = \left\lceil2(k+1)\sqrt{n}b\right\rceil^n$ is a II-modulus of total boundedness for $A$.
\end{example}
\begin{proof}
Let $k \in \N$ and $(x_p) \subseteq A$. Denote $N = \left\lceil2(k+1)\sqrt{n}b\right\rceil$. Clearly, $A$ is included in the cube $[-b,b]^n$. Divide this cube into $N^n$ subcubes of equal side lengths $2b/N$. The diameter of each subcube is $ 2b\sqrt{n}/N \le 1/(k+1)$. Applying the pigeonhole principle we obtain that there exist $0 \le i < j \le N^n$ such that $\|x_i - x_j\|_2\le 1/(k+1)$.
\end{proof}

\begin{example} \label{ex-closure}
Let $(X,d)$ be a metric space and $A \subseteq X$ totally bounded with II-modulus of total boundedness $\gamma$. Then the closure of $A$ is totally bounded with II-modulus of total boundedness $\gamma$.
\end{example}
\begin{proof}
Let $k \in \N$ and $(x_n) \subseteq \ol{A}$. Take $m \in \N$. Then there exists a sequence $(a_n) \subseteq A$ such that $d(x_n,a_n) \le 1/(m+1)$. Since $\gamma$ is a II-modulus of total boundedness for $A$, there exist $0 \le i < j \le \gamma(k)$ such that $d(a_i,a_j) \le 1/(k+1)$. Thus,
\[d(x_i,x_j) \le d(x_i,a_i) + d(a_i,a_j) + d(a_j, x_j) \le \frac{1}{k+1} + \frac{2}{m+1}.\]
Hence, there exist $0 \le i < j \le \gamma(k)$ and $(m_s)$ a strictly increasing sequence of natural numbers such that for every $s \ge 0$, $d(x_i,x_j) \le 1/(k+1) + 2/(m_s+1)$, from where $d(x_i,x_j) \le 1/(k+1)$.
\end{proof}

\begin{example} \label{ex-conv-hull}
Let $(X, \| \cdot \|)$ be a normed space and $A \subseteq X$ totally bounded with II-modulus of total boundedness $\gamma$. Then the convex hull 
$\text{co}(A)$ of $A$ is totally bounded with II-modulus of total boundedness 
\[\ol{\gamma}(k) = \left\lceil\ 2(m+1)\sqrt{n+1}\ \right\rceil^{n+1},\]
where $n = \gamma(4k+3) - 1$, $m = \left\lceil 2(k+1)(n+1)\left(b + 1/(4k+4)\right)\right\rceil -1$ and $b > 0$ is such that $\|a\| \le b$ for all $a \in A$.
\end{example}
\begin{proof}
Let $k \in \N$ and $(y_p) \subseteq \text{co}(A)$. Denote $r_k=1/(4k+4)$. By Proposition \ref{prop-mtb}.(\ref{mtb-II-I}), there exist $a_0,\ldots, a_n \in A$ such that 
\[A \subseteq \bigcup_{l=0}^{n}\ol{B}\left(a_l,r_k\right).\] 
Let $p \in \N$. Then there exist $s(p) \in \N$ and for $l = 0, \ldots, s(p)$, $t_l^p \in [0,1]$ and $x_l^p \in A$ such that $\ds \sum_{l=0}^{s(p)} t_l^p = 1$ and $\ds y_p = \sum_{l=0}^{s(p)}t_l^p x_l^p$. We can assume that $s(p)=n$ and $x_l^p \in \ol{B}\left(a_l,r_k\right)$ for $l = 0, \ldots, n$. This can be done because we can group any two points that belong to the same ball in the following way: suppose $x_0^p,x_1^p \in \ol{B}\left(a_0,r_k\right)$. Denote 
\[\ol{x}_0^p = \frac{t_0^p}{t_0^p + t_1^p}x_0^p + \frac{t_1^p}{t_0^p+t_1^p}x_1^p \in \ol{B}\left(a_0,r_k\right).\] 
Then, $y_p = (t_0^p + t_1^p)\ol{x}_0^p + t_2^p x_2^p + \ldots + t_n^p x_n^p$. Note that if in this way we obtain less than $n+1$ points in the convex combination then we add the corresponding $a_l$'s multiplied by $0$.

For $p \in \N$, $t^p = (t_0^p, \ldots, t_n^p) \in \R^{n+1}$ and $\|t^p\|_2 \le 1$. By Example \ref{ex-bd-set-Rn}, there exist $0 \le i < j \le \left\lceil\ 2(m+1)\sqrt{n+1}\ \right\rceil^{n+1}$ such that 
\[\|t^i - t^j\|_2 \le \frac{1}{m+1} \le \frac{2r_k}{\left(b+r_k\right)(n+1)}.\]
Then,
\bua
\|y_i - y_j\| & = & \left\|\sum_{l=0}^n (t_l^i x_l^i - t_l^j x_l^j )\right\| \le \left\|\sum_{l=0}^n (t_l^i x_l^i - t_l^i x_l^j )\right\|  + \left\|\sum_{l=0}^n (t_l^i x_l^j - t_l^j x_l^j )\right\| \\
& \le &\sum_{l=0}^n t_l^i \|x_l^i-x_l^j\|  + \|x_l^j\|\sum_{l=0}^n |t_l^i - t_l^j|\\
& \le & 2r_k\sum_{l=0}^n t_l^i + \left(b+r_k\right)(n+1)\|t^i-t^j\|_2 \le 2r_k + 2r_k = \frac{1}{k+1}.
\eua
\end{proof}

\section{Approximate points and explicit closedness}
\label{section-approximate}

In the following, $(X,d)$ is a metric space and $F\subseteq X$ a nonempty subset.  We assume that  
\[
F=\bigcap_{k\in\N}\tilde{F}_k,
\]
where $\tilde{F}_k\se X$ for every $k\in\N$ and we say that the family  $(\tilde{F}_k)$ is a {\em representation} of $F$. 
Of course, $F$ has a trivial representation, by letting $\tilde{F}_k:=F$ for all $k$. 
Naturally, we think of more interesting choices for $\tilde{F}_k$, as we look at 
\[
AF_k:=\bigcap_{l\le k}\tilde{F}_l
\]
as some weakened approximate form of $F$. A point $p\in AF_k$  is said to be a
{\em $k$-approximate $F$-point}. \\[1mm]
In the following we always view $F$ not just as a set but we suppose it is equipped 
with a representation $(\tilde{F}_k)$ to which we refer implicitly in many 
of the notations introduced below.  

Let $(x_n)$ be a sequence in $X$.

\begin{definition}
We say that  
\be
\item $(x_n)$ has approximate $F$-points if  $\forall k\in\NN \,\exists N\in \NN\,\,  (x_N\in AF_k)$.
\item $(x_n)$ has the liminf property w.r.t. $F$ if $\forall k,n\in \NN\,\exists N\in\N \,\,\, (N\ge n \text{~and~} x_N\in AF_k)$.
\item $(x_n)$ is asymptotically regular w.r.t. $F$ if $\forall k\in \NN\,\exists N\in\NN\,\forall m\ge N \,\,\, (x_m\in AF_k)$.
\ee
\end{definition}

\begin{lemma}\label{xkafk--subseq-liminf}
Assume that $x_k\in AF_k$ for all $k\in\N$. Then any subsequence of $(x_n)$ has the liminf property w.r.t. $F$.
\end{lemma}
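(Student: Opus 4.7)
The plan is straightforward and rests on two simple observations. First, the sets $AF_k$ are nested in a decreasing way: since $AF_k=\bigcap_{l\le k}\tilde{F}_l$, we have $AF_m\subseteq AF_k$ whenever $m\ge k$. Second, for any subsequence $(x_{n_j})$ of $(x_n)$, the indexing map is strictly increasing, so $n_j\ge j$ for every $j\in\N$.

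To verify the liminf property for a given subsequence $(x_{n_j})$, I would fix $k,n\in\N$ and simply set $N:=\max\{n,k\}$. Then $N\ge n$, and moreover $n_N\ge N\ge k$. Using the hypothesis $x_{n_N}\in AF_{n_N}$ together with the nesting $AF_{n_N}\subseteq AF_k$, we obtain $x_{n_N}\in AF_k$, which is exactly what the liminf property requires.

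There is no real obstacle: the only thing to be careful about is the (standard) convention that a subsequence has a strictly increasing index function $j\mapsto n_j$, so that $n_j\ge j$. Everything else is bookkeeping with the definitions of $AF_k$ and the liminf property.
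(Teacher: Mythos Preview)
Your proof is correct and essentially identical to the paper's: the paper also fixes $k,n$, takes $N\ge\max\{n,k\}$, and uses $x_{m_N}\in AF_{m_N}\subseteq AF_N\subseteq AF_k$ via the inequality $m_N\ge N$ for the subsequence index. The only cosmetic difference is that the paper allows any $N\ge\max\{n,k\}$ while you take $N=\max\{n,k\}$.
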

\begin{proof}
Let $(x_{m_l})$ be a subsequence of $(x_n)$. Then $m_l\ge l$ and $x_{m_l}\in AF_{m_l}$ for all $l\in\N$. 
Let $k,n\in\N$ and take $N\geq \max\{n,k\}$. Then $x_{m_N}\in  AF_{m_N}\se AF_N\se AF_k$. 
\end{proof}

\begin{definition} \label{explicit-closed}
We say that  $F$ is {\em explicitly closed} (w.r.t. the representation $(\tilde{F}_k)$) if 
\[
\forall p\in X \,\left( \forall 
N,M\in\N (AF_M\cap \ol{B}\left(p, 1/(N+1)\right)\ne \emptyset)\ra p\in F\right).
\]
\end{definition}

One can easily see that if  $F$  is explicitly closed, then $F$ is closed. 
$F$ in particular is explicitly closed if all the sets $AF_k$ are (and so 
if all the sets $\tilde{F}_k$ are closed). Hence 
closedness of $F$ is  equivalent to 
explicit closedness of $F$ w.r.t. the trivial representation.
\details{
Let $p\in X\setminus F$ be arbitrary, so $p\not\in AF_k$ for some $k\in\N$. As a consequence,
there exist $M,N$ s.t. for all $q\in B_{\frac1{N+1}}(p)$ we have $q\not\in AF_M$. It follows that $B_{\frac1{N+1}}(p)\subset X\setminus F$. 
Hence  $X\setminus F$ is open. 

Furthermore, $F$ is explicitly closed w.r.t. the trivial representation

\begin{tabular}{ll}
 iff & $\forall p\in X\,\exists N\in\NN\,( F\cap \ol{B}_{\frac1{N+1}}(p)\ne \emptyset\ra p\in F)$\\
 iff & $\forall p\in X\setminus F\,\exists N\in\NN\,( F\cap \ol{B}_{\frac1{N+1}}(p)=\emptyset)$\\
 iff & $\forall p\in X\setminus F\,\exists N\in\NN\,( \ol{B}_{\frac1{N+1}}(p)\se X\setminus F)$\\
 iff & $\forall p\in X\setminus F\,\exists N\in\NN\,( B_{\frac1{N+1}}(p)\se X\setminus F)$\\
 iff & $X\setminus F$ is open.
\end{tabular}
}
The property of being explicitly closed can be re-written (pulling also the 
quantifier hidden in `$p\in F$' in front) in the following equivalent form
\[
\forall k\in\NN\,\forall p\in X\,\exists N,M\in\NN\,\left( AF_M\cap \ol{B}\left(p,1/(N+1)\right)\ne \emptyset\ra p\in AF_{k}\right).
\]
This suggests the following uniform strengthening of explicit closedness:

\begin{definition} \label{uniformly-closed}
$F$ is called {\em uniformly closed with moduli}
$\delta_F,\omega_F:\NN\to\NN$ if 
\[\forall k\in\NN\,\forall p,q\in X\, \,
\left( q\in AF_{\delta_F(k)} \text{~and~} d(p,q)\le \frac1{\omega_F(k)+1}  \rightarrow p\in AF_{k}\right).  \]
\end{definition}

\begin{lemma}\label{F-unifcont-liminf-F} 
Assume that $F$ is explicitly closed, $(x_n)$ has the liminf property w.r.t. $F$ and that $(x_n)$ converges strongly to $\widehat{x}$.  Then  $\widehat{x}\in F$.
\end{lemma}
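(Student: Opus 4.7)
The plan is to verify the hypothesis of explicit closedness directly for the point $\widehat{x}$. Unpacking Definition~\ref{explicit-closed}, to conclude $\widehat{x} \in F$ it suffices to show that
\[
\forall N, M \in \N \, \bigl( AF_M \cap \ol{B}(\widehat{x}, 1/(N+1)) \ne \emptyset \bigr).
\]

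Fix $N, M \in \N$. First I would use the strong convergence $x_n \to \widehat{x}$ to pick an index $n_0 \in \N$ such that $d(x_n, \widehat{x}) \le 1/(N+1)$ for every $n \ge n_0$. Then I would apply the liminf property of $(x_n)$ w.r.t.\ $F$ with parameters $k := M$ and $n := n_0$, which yields some $N' \ge n_0$ with $x_{N'} \in AF_M$. By choice of $n_0$, we simultaneously have $d(x_{N'}, \widehat{x}) \le 1/(N+1)$, and hence $x_{N'} \in AF_M \cap \ol{B}(\widehat{x}, 1/(N+1))$, so the intersection is nonempty.

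Since $N, M$ were arbitrary, explicit closedness of $F$ gives $\widehat{x} \in F$, which is the desired conclusion. There is no real obstacle here: the proof is just a matter of feeding the two ingredients (convergence radius and liminf-witness) into the quantifier pattern of Definition~\ref{explicit-closed}. The only mild care needed is to order the choices correctly, namely to pick the convergence threshold $n_0$ first and only then invoke the liminf property starting from $n_0$, so that the witnessing iterate lies both in $AF_M$ and in the $1/(N+1)$-ball around $\widehat{x}$.
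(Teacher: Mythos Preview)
Your proof is correct and essentially the same as the paper's: both combine the convergence threshold with a liminf-witness to produce a point of $AF_M$ in the $1/(N+1)$-ball around $\widehat{x}$. The only cosmetic difference is that the paper works with the equivalent reformulation of explicit closedness stated just before Definition~\ref{uniformly-closed} (fixing $k$ first and extracting $N,M$), whereas you verify the hypothesis of Definition~\ref{explicit-closed} directly for all $N,M$; the underlying argument is identical.
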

\begin{proof}
Let $k\in\N$ be arbitrary.  Since $F$ is explicitly closed, there exist $M,N\in\NN$ such that 
\beq  \exists q\in X\,\left(
d(\widehat{x},q)\le \frac1{N+1} \text{~and~} q\in AF_{M}\right) \to \widehat{x}\in AF_k.
\label{pointwise} 
\eeq  
As $\ds\limn x_n=\widehat{x}$,  there exists $\tilde{N}\in\N$ such that  $d(x_n,\widehat{x})\leq \frac1{N+1}$ for all $n\geq \tilde{N}$. 
As $(x_n)$  has the liminf property w.r.t. $F$, we get that $x_K\in AF_{M}$ for some $K\ge \tilde{N}$.
Applying \eqref{pointwise} gives $\widehat{x}\in AF_k$. 
\end{proof}

\begin{lemma}\label{xn-cluster-point-F} 
Suppose that $X$ is compact, $F$ is explicitly closed and  that $(x_n)$ has approximate $F$-points. Then the set $\{ x_n| n\in\N\}$ has an adherent point  
$x\in F$. 
\end{lemma}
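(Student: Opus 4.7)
The plan is to chain the two preceding lemmas together with compactness. The key intermediate object will be a sequence $(y_k)$ of approximate $F$-points with $y_k \in AF_k$, which is exactly the hypothesis of Lemma \ref{xkafk--subseq-liminf}.

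First, I would use the hypothesis that $(x_n)$ has approximate $F$-points to extract, for each $k \in \N$, an index $N_k \in \N$ with $x_{N_k} \in AF_k$. Setting $y_k := x_{N_k}$ produces a sequence in $X$ drawn from $\{x_n \mid n \in \N\}$ with $y_k \in AF_k$ for every $k$. (Note that the $N_k$ need not be chosen increasing; we only need a sequence of points, not a subsequence in the strict sense, since Lemma \ref{xkafk--subseq-liminf} is stated for any sequence with $y_k \in AF_k$.)

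Next, I would invoke compactness of $X$ to extract a convergent subsequence $(y_{k_j})$ of $(y_k)$, with limit $x \in X$. By Lemma \ref{xkafk--subseq-liminf} applied to $(y_k)$, this subsequence $(y_{k_j})$ has the liminf property w.r.t.\ $F$. Since $F$ is explicitly closed and $(y_{k_j})$ is a sequence that both has the liminf property and converges to $x$, Lemma \ref{F-unifcont-liminf-F} yields $x \in F$.

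Finally, $x$ is the limit of a sequence of points from $\{x_n \mid n \in \N\}$, so it is an adherent point of this set, completing the proof. I do not expect any real obstacle here: the role of compactness is only to produce the convergent subsequence, and the two preceding lemmas handle the liminf property and the membership in $F$, respectively. The only mildly nonobvious point is the observation that one does not need the $N_k$'s to form a strictly increasing sequence; forming the auxiliary sequence $(y_k)$ indexed by $k$ with $y_k \in AF_k$ is what puts one exactly in the position to apply Lemma \ref{xkafk--subseq-liminf}.
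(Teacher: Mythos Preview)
Your proposal is correct and follows essentially the same route as the paper: form $y_k := x_{N_k} \in AF_k$, extract a convergent subsequence by compactness, apply Lemma~\ref{xkafk--subseq-liminf} for the liminf property, and then Lemma~\ref{F-unifcont-liminf-F} to conclude $x \in F$. Your additional remark that the $N_k$ need not be increasing is a helpful clarification but not a departure from the paper's argument.
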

\begin{proof} We have for each $k\in\NN$ an $m_k\in\NN$ such that $x_{m_k}$ is a $k$-approximate $F$-point. 
Let $y_k:=x_{m_k}\in AF_k. $ Since $X$ is compact, the sequence $(y_k)$ has a convergent 
subsequence $(y_{k_n})$. Let $\ds x:=\limn y_{k_n}$. By Lemma \ref{xkafk--subseq-liminf}, $(y_{k_n})$ has the liminf property w.r.t. $F$. Apply now 
Lemma \ref{F-unifcont-liminf-F} to conclude that  $x\in F$.
\end{proof}

\section{Generalized Fej\'er monotone sequences} \label{section-Fejer}

In this section we give a generalization of Fej\'{e}r monotonicity, one of the most used methods for strong convergence proofs in convex optimization.

We consider functions $G:\RR_+\to\RR_+$ with the property 
\[ (G)\quad \mbox{If} \ a_n\stackrel{n\to\infty}{\to} 0,\ \mbox{then} \  G(a_n) \  
\stackrel{n\to\infty}{\to} 0 \] for all sequences $(a_n)$ in $\RR_+$. 

Obviously, $(G)$ is equivalent to the fact that there exists a mapping $\alpha_G:\N\to\N$ satisfying 
\beq 
\forall k\in\N \, \forall a\in\RR_+ \ 
\left( a\le \frac1{\alpha_G(k)+1} \to G(a) \le \frac1{k+1}\right). \label{def-alpha-G} 
\eeq
We say that such a mapping  $\alpha_G$ is a {\em $G$-modulus}. 

Any continuous $G:\RR_+\to\RR_+$ with $G(0)=0$ satisfies 
$(G)$ and any modulus of continuity of $G$ at $0$ is a $G$-modulus. 

We also consider functions $H:\RR_+ \to\RR_+$ with the property 
\[ (H)\quad \mbox{If} \  H(a_n) \  
\stackrel{n\to\infty}{\to} 0, \ \mbox{then} \  a_n\stackrel{n\to\infty}{\to} 
0\] for all sequences $(a_n)$ in $\RR_+$,  
which is kind of the converse of $(G)$.

Similarly, $(H)$ is equivalent to the existence of an {\em $H$-modulus} $\beta_H:\N \to\N$ such that
 \beq
 \forall k\in\N \, \forall a\in\RR_+ \ 
\left(H(a)\le \frac1{\beta_H(k)+1} \to a \le  \frac1{k+1}\right). \label{def-beta-H} 
\eeq

\mbox{}

Let $(x_n)$ be a sequence in the metric space $(X,d)$ and $\emptyset \ne F\se X$.

\begin{definition} 
$(x_n)$ is {\em $(G,H)$-Fej\'er monotone} w.r.t. $F$ if for all $n,m\in\N$ and all $p\in F$,
\[H(d(x_{n+m},p))\le G(d(x_n,p)).\] 
\end{definition}

Note that  the usual notion of being `Fej\'er monotone' is  just $(id_{\RR_+},id_{\RR_+})$-Fej\'er monotone. 

The following lemma collects some useful properties of generalized Fej\'er monotone sequences.

\begin{lemma}\label{GH-Fejer-basic}
Let $(x_n)$ be $(G,H)$-Fej\'er monotone w.r.t. $F$. 
\be
\item\label{GH-Fejer-basic-cluster} If $\{ x_n|n\in\N\}$ has an adherent 
point $\hat{x}\in F$, then $(x_n)$ converges to $\hat{x}$.
\item\label{GH-Fejer-basic-bounded} Assume that  $H$ has the property
\[(H1) \quad \mbox{If} \ H(a_n) \text{~is bounded},  \text{~then~} \  (a_n) \text{~is bounded} \]
for all sequences $(a_n)$ in $\RR_+$. Then $(x_n)$ is bounded.
\ee
\end{lemma}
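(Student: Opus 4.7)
For part (i), my plan is to exploit the subsequence $(x_{n_k})$ converging to $\hat{x}$ provided by the adherent-point hypothesis, and then use $(G,H)$-Fej\'er monotonicity to spread this convergence to the entire tail. Since $\hat{x}\in F$, the inequality $H(d(x_{n_k+m},\hat{x}))\le G(d(x_{n_k},\hat{x}))$ holds for every $k$ and every $m\ge 0$. As $d(x_{n_k},\hat{x})\to 0$, property $(G)$ guarantees $G(d(x_{n_k},\hat{x}))\to 0$, so the entire tail of the $H$-values $H(d(x_j,\hat{x}))$ becomes arbitrarily small once $j\ge n_k$ for $k$ large. Finally, property $(H)$ converts this back into $d(x_j,\hat{x})\to 0$.

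To make this quantitative, given $r\in\N$ I would use a $G$-modulus $\alpha_G$ and an $H$-modulus $\beta_H$: pick $K$ large enough that $d(x_{n_K},\hat{x})\le 1/(\alpha_G(\beta_H(r))+1)$; then by the defining property of $\alpha_G$ we get $G(d(x_{n_K},\hat{x}))\le 1/(\beta_H(r)+1)$, so for every $j\ge n_K$ Fej\'er monotonicity gives $H(d(x_j,\hat{x}))\le 1/(\beta_H(r)+1)$, and the definition of $\beta_H$ yields $d(x_j,\hat{x})\le 1/(r+1)$. This establishes the convergence (and in fact produces an explicit rate in terms of the moduli and of a rate of convergence of the subsequence).

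For part (ii), the argument is much shorter. Since $F\ne\emptyset$, fix any $p\in F$. Specializing the Fej\'er inequality to $n=0$ yields
\[ H(d(x_m,p))\le G(d(x_0,p)) \quad \text{for all } m\in\N, \]
so the sequence $(H(d(x_m,p)))_{m\in\N}$ is bounded by the single constant $G(d(x_0,p))$. Property $(H1)$ then immediately delivers boundedness of $(d(x_m,p))_{m\in\N}$, and hence of $(x_n)$.

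The only nontrivial point is the composition $\alpha_G\circ\beta_H$ in part (i): one must apply $\beta_H$ first (to translate the desired bound on $d$ into a bound on $H$) and then $\alpha_G$ (to translate that into a bound on $d(x_{n_k},\hat{x})$ which is what the subsequence convergence supplies). This is the usual trick when transferring convergence through a pair $(G,H)$, and there is no real obstacle beyond getting the order right.
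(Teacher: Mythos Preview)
Your proof is correct and follows essentially the same route as the paper: in part (i) you pick an index close to $\hat{x}$ via the adherent-point hypothesis, apply the $G$-modulus to bound $G(d(x_K,\hat{x}))$, use $(G,H)$-Fej\'er monotonicity to propagate this to all later terms, and finish with the $H$-modulus; in part (ii) you specialize to $n=0$ and invoke $(H1)$, exactly as the paper does. The only cosmetic difference is that the paper uses $\alpha_G(\beta_H(p)+1)$ where you use $\alpha_G(\beta_H(r))$, but both choices work.
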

\begin{proof}
\be
\item 
Let  $p\in\N$ be  arbitrary and  $K:=K_p$ be so that 
\[ d(x_{K},\hat{x})\le \frac1{\alpha_G(\beta_H(p)+1)+1} \] 
(such a $K$ has to exist by the assumption). 
Applying the fact that $(x_n)$ is $(G,H)$-Fej\'er monotone w.r.t. $F$ and \eqref{def-alpha-G},  we get that for all $l\in\NN$,
\[ H(d(x_{{K}+l},\hat{x}))\le 
G(d(x_{K},\hat{x}))  \le \frac1{\beta_H(p)+1}. \] 
Using now \eqref{def-beta-H},  it follows that $\ds d(x_{{K}+l},\hat{x}) \le \frac1{p+1}$ for all  $l\in\NN$.
 Hence $(x_n)$ converges to $\hat{x}$.
\item 
Since $(x_n)$ is $(G,H)$-Fej\'er monotone w.r.t. $F$ we have for $p\in F$ that $H(d(x_n,p)) \le G(d(x_0,p))$ for all $n\in\N$. Hence,  $(H(d(x_n,p)))$ 
is bounded and so, by $(H1)$,  $(d(x_n,p))$ is bounded.  
\ee
\end{proof}

As an immediate consequence of Lemma \ref{xn-cluster-point-F} and Lemma \ref{GH-Fejer-basic}.\eqref{GH-Fejer-basic-cluster}, we get 

\begin{proposition}\label{general-GH-fejer} 
Let $X$ be a compact metric space  and $F$ be explicitly closed. Assume that $(x_n)$ is  $(G,H)$-Fej\'er monotone with respect to $F$ 
and that $(x_n)$ has approximate $F$-points. Then $(x_n)$ converges to a point $x\in F$. 
\end{proposition}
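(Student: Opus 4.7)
The plan is to string together the two lemmas that the paper has just finished proving, so this is essentially bookkeeping rather than a new argument. All the real content sits in Lemma \ref{xn-cluster-point-F} (producing a cluster point in $F$ via compactness plus explicit closedness) and in Lemma \ref{GH-Fejer-basic}.(\ref{GH-Fejer-basic-cluster}) (upgrading a cluster point in $F$ to a limit, using $(G,H)$-Fej\'er monotonicity together with the $G$- and $H$-moduli).

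First, I would verify the hypotheses of Lemma \ref{xn-cluster-point-F}: $X$ is compact by assumption, $F$ is explicitly closed by assumption, and $(x_n)$ has approximate $F$-points by assumption. That lemma then provides some $\hat{x}\in F$ which is an adherent point of $\{x_n \mid n\in\N\}$, i.e.\ there exists a subsequence $(x_{n_k})$ with $x_{n_k}\to \hat{x}$.

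Next, I would invoke Lemma \ref{GH-Fejer-basic}.(\ref{GH-Fejer-basic-cluster}). Its hypotheses are exactly that $(x_n)$ is $(G,H)$-Fej\'er monotone w.r.t.\ $F$ (given) and that $\{x_n \mid n\in\N\}$ has an adherent point in $F$ (supplied by the previous step). The conclusion is that the whole sequence $(x_n)$ converges to $\hat{x}$, and since $\hat{x}\in F$ we are done.

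There is no real obstacle: the only thing to be careful about is matching the two lemmas' vocabulary (``adherent point'' vs.\ ``converges to''), and making sure that the point produced by Lemma \ref{xn-cluster-point-F} actually lies in $F$ rather than merely in $\overline{\{x_n\}}$ — which is exactly what that lemma asserts. Accordingly, the whole proof is one or two sentences and could simply read: ``By Lemma \ref{xn-cluster-point-F}, $\{x_n\mid n\in\N\}$ has an adherent point $\hat{x}\in F$; by Lemma \ref{GH-Fejer-basic}.(\ref{GH-Fejer-basic-cluster}), $(x_n)$ converges to $\hat{x}\in F$.''
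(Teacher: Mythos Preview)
Your proposal is correct and matches the paper's own proof exactly: the paper states the proposition as an immediate consequence of Lemma \ref{xn-cluster-point-F} and Lemma \ref{GH-Fejer-basic}.(\ref{GH-Fejer-basic-cluster}), which is precisely the two-step argument you outline.
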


\begin{remark} If in Proposition 
\ref{general-GH-fejer} we either weaken `compact' to `totally bounded' or drop the assumption on $F$ being explicitly closed, then the conclusion in general 
becomes false, in fact $(x_n)$ might not even be Cauchy (see Example \ref{general-fejer-FT-counter}).
\end{remark}

Let us recall that a metric space is said to be {\em boundedly compact} if every bounded sequence has a convergent subsequence.  One can easily see that $X$ is 
boundedly compact if and only if for every $a\in X$ and $r>0$ the closed ball $\ol{B}(a,r)$ is compact.

\begin{remark}\label{general-fejer-boundedly-compact}
The proof of Proposition \ref{general-GH-fejer} uses the compactness property only for the sequence $(x_n)$ and so it is enough to require that  $X$  is boundedly 
compact  and that the sequence at hand is bounded. As we prove above, this is the case if $H$ has the property $(H1)$ for all sequences $(a_n)$ in $\RR_+$.
\end{remark}

\subsection{Uniform  $(G,H)$-Fej\'er monotone sequences}

Being $(G,H)$-Fej\'er monotone w.r.t. $F$ can be logically re-written as 
\[\ba{ll}
\forall n,m\in\NN\,\forall p\in X\,\, \bigg(  & \forall k\in \NN (p\in AF_k) \ra  \\
& 
\ds \forall r\in\N\, \forall l\le m  \left(H(d(x_{n+l},p))< G(d(x_n,p)) +
\frac1{r+1} \right)\bigg),
\ea
\]
hence as
\[\forall r, n,m\,\forall p\,\exists k
\  \bigg( p\in AF_k \rightarrow \forall l\le m \left(H(d(x_{n+l},p))< G(d(x_n,p)) +
\frac1{r+1} \right)\bigg). 
\]

If $p\in AF_k$  can be written as a purely universal  formula (when formalized in the language of the 
systems used in the logical metatheorems from \cite{Koh05a,GerKoh08,Kohlenbach(book)}), then 
\[p\in AF_k \rightarrow  \forall l\le m \left(H(d(x_{n+l},p))< G(d(x_n,p)) +
\frac1{r+1} \right)\]
is (equivalent to) a purely existential formula. Hence one can use these metatheorems to extract 
a uniform bound on `$\exists k\in\NN$' (and so in fact a uniform realizer as  the formula is monotone in $k$)  which - e.g. for bounded $(X,d)$ - only 
depends on a bound on the metric and majorizing data of the other 
parameters involved but not on `$p$'. This motivates the next definition:

\begin{definition} 
We say that $(x_n)$ is {\em uniformly $(G,H)$-Fej\'er monotone} w.r.t. $F$ if for all $r, n,m\in\NN$, 
\[
\exists k\in\NN\, \forall p\in X\
\bigg( p\in AF_k \rightarrow \forall l\le m \left(H(d(x_{n+l},p))< G(d(x_n,p)) +
\frac1{r+1} \right)\bigg).
\]
Any upper bound (and hence realizer) $\chi(n,m,r)$ of `$\exists k\in \NN$'  is called a modulus of $(x_n)$ being (uniformly) 
$(G,H)$-Fej\'er monotone w.r.t. $F$.
\end{definition} 

If $G\!=\!H=\!id_{\RR_+}$, we say simply that $(x_n)$ is uniformly Fej\'er monotone w.r.t. $F$.

\begin{remark}\label{remark-unif-Fejer-Fejer}
\begin{enumerate} 
\item
A standard compactness argument shows that for $X$ compact, $F$ explicitly closed and 
$G,H$ continuous the notions `$(G,H)$-Fej\'er monotone w.r.t. $F$' and 
`uniformly $(G,H)$-Fej\'er monotone w.r.t. $F$' are equivalent.\details{??? TO WRITE???}
\item 
In Corollary \ref{qualitative-corollary} 
we will see, as a consequence of our quantitative metastable analysis 
of the proof of Proposition 
\ref{general-GH-fejer}, that the Cauchyness of $(x_n)$ holds 
even if we replace `compact' by `totally bounded' and drop the explicit closedness of $F$ {\bf provided} that we 
replace `$(G,H)$-Fej\'er monotone' by `uniform $(G,H)$-Fej\'er monotone'.
\item 
The equivalence between these notions can be proven (relative to 
the framework of ${\cal T}^{\omega}[X,d]$) for general bounded 
metric spaces $X$ and $F,G,H$ from the `nonstandard' uniform 
boundedness principle $\exists${\rm-UB}$^{X}$ studied in \cite{Kohlenbach(book)}.
Though being false for specific spaces $X,$ the use of $\exists${\rm-UB}$^{X}$ 
in proofs of statements of the form considered in our general bound-extraction 
theorems is allowed and the bounds extracted from proofs in 
${\cal T}^{\omega}[X,d]+\exists${\rm-UB}$^X$ 
will be correct in any 
bounded metric space $X$ (see \cite[Theorem 17.101]{Kohlenbach(book)}).
\end{enumerate}
\end{remark}

\section{Main quantitative results} \label{section-quantitative}

In this section, $(X,d)$ is a totally bounded metric space with a II-modulus of total boundedness $\gamma$ and $\emptyset\ne F\se X$. Furthermore, 
$G,H:\RR_+\to\RR_+$ satisfy $(G), (H)$ for all sequences  $(a_n)$ in $\RR_+$,  $\alpha_G$ is a $G$-modulus  and $\beta_H$ is an $H$-modulus.\\

Assume that $(x_n)$ has approximate $F$-points. We can then define the  mapping 
\beq
\varphi_F:\NN\to\NN, \quad \varphi_F(k)=\min \{m\in\N \mid x_m\in AF_k\} \label{def-phi-F}
\eeq
Thus, $x_{\varphi_F(k)}\in AF_k$ for all $k$ and $\varphi_F$ is monotone nondecreasing.  An {\em approximate $F$-point bound} for $(x_n)$ is any function
$\Phi:\NN\to\NN$ satisfying 
\beq
\forall k\in\NN \,\exists N\le \Phi(k) \,\,  (x_N\in AF_k).
\eeq
If $\Phi$ is an approximate $F$-point bound for $(x_n)$, then 
$$\Phi^M:\N\to\N, \quad \Phi^M(k)=\max\{\Phi(m)\mid m\leq k\}$$
is monotone nondecreasing and again an approximate $F$-point bound for $(x_n)$. 

{\em Thus, we shall assume w.l.o.g. that any approximate $F$-point bound for $(x_n)$ is monotone nondecreasing.} 

Then $\Phi:\N\to\N$ is an approximate $F$-point 
bound for $(x_n)$ if and only if $\Phi$ majorizes $\varphi_F$.\\

The next theorem is the main step towards a quantitative version of  Proposition  \ref{general-GH-fejer} (see also the discussion in
\cite[pp. 464-465]{Kohlenbach(book)} on the logical background behind
the elimination of sequential compactness in the original proof
in favor of a computational argument):

\begin{theorem}\label{main-step-general-GH-fejer}
Assume that
\be
\item $(x_n)$ is uniformly $(G,H)$-Fej\'er monotone w.r.t. $F$,  with modulus $\chi$; 
\item $(x_n)$ has approximate $F$-points, with $\Phi$ being an approximate 
$F$-point bound.
\ee
Then $(x_n)$ is Cauchy and, moreover, for all $k\in\N$ and all $g:\N\to\N$,
\[\exists N\le \Psi(k,g,\Phi,\chi,\alpha_G,\beta_H,\gamma) 
\,\forall i,j\in [N,N+g(N)]\ 
\left(d(x_i,x_j)\le \frac1{k+1}\right), \]
where $\Psi(k,g,\Phi,\chi,\alpha_G,\beta_H,\gamma):=\Psi_0(P,k,g,\Phi,\chi,\beta_H)$, with 
\[ \chi_g(n,k):=\chi(n,g(n),k),\quad  \chi^M_g(n,k):=\max\{ \chi_g(i,k) \mid i\le n\},\]
$P:=\gamma\left( \alpha_G\left(2\beta_H(2k+1)+1\right)\right)$ 
and
\[ \left\{ \ba{l} \Psi_0(0,k,g,\Phi,\chi,\beta_H):=0 \\ 
\Psi_0(n+1,k,g,\Phi,\chi,\beta_H):=\Phi\left(\chi^M_g\left(\Psi_0(n,k,g,\Phi,\chi,\beta_H),2\beta_H(2k+1)+1\right)\right). \ea \right. \]  
\end{theorem}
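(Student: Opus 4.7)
The plan is to combine the II-modulus of total boundedness (applied to a cleverly chosen finite subsequence of $(x_n)$) with uniform $(G,H)$-Fej\'er monotonicity to collapse the existential in the metastability statement. Set $K:=2\beta_H(2k+1)+1$ so that $2/(K+1)=1/(\beta_H(2k+1)+1)$, and set $K':=\alpha_G(K)$, so that $P=\gamma(K')$. For $i=0,1,\dots,P$ I define $k_i:=\chi^M_g(\Psi_0^{(i)},K)$ (writing $\Psi_0^{(i)}:=\Psi_0(i,k,g,\Phi,\chi,\beta_H)$) and, using the approximate $F$-point bound, pick $m_i\le \Phi(k_i)=\Psi_0^{(i+1)}$ with $y_i:=x_{m_i}\in AF_{k_i}$. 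Note that by monotonicity of $\Phi$, $\chi^M_g$, the sequence $(\Psi_0^{(i)})$ is itself monotone nondecreasing.

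Next I apply the II-modulus of total boundedness to the $(P+1)$-element sequence $y_0,\dots,y_P$ at level $K'$: this yields indices $0\le i<j\le P$ with $d(y_i,y_j)\le 1/(K'+1)$, hence by the $G$-modulus $G(d(y_i,y_j))\le 1/(K+1)$. Because $i<j$ gives $m_i\le\Psi_0^{(i+1)}\le\Psi_0^{(j)}$, the definition of $\chi^M_g$ yields $\chi(m_i,g(m_i),K)\le k_j$, so using $AF_{k+1}\subseteq AF_k$ we get $y_j\in AF_{\chi(m_i,g(m_i),K)}$. Now uniform $(G,H)$-Fej\'er monotonicity with anchor $n=m_i$, window $m=g(m_i)$, error parameter $r=K$, and witness $p=y_j$ gives, for every $l\le g(m_i)$,
\[
H(d(x_{m_i+l},y_j))\ <\ G(d(x_{m_i},y_j))+\frac{1}{K+1}\ \le\ \frac{2}{K+1}\ =\ \frac{1}{\beta_H(2k+1)+1}.
\]
By the $H$-modulus this forces $d(x_{m_i+l},y_j)\le 1/(2k+2)$ for all $l\le g(m_i)$, and the triangle inequality gives $d(x_{m_i+l},x_{m_i+l'})\le 1/(k+1)$ for all $l,l'\le g(m_i)$. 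Taking $N:=m_i\le\Psi_0^{(i+1)}\le\Psi_0^{(P)}=\Psi(k,g,\Phi,\chi,\alpha_G,\beta_H,\gamma)$ establishes the metastable Cauchy conclusion, from which ordinary Cauchyness follows at once.

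The substantive step is the coupled bookkeeping in the definition of the iterated bound $\Psi_0$: one must choose the $k_i$ large enough that the Fej\'er inequality, applied with $p=y_j$ and anchor $m_i$ for any later index $j>i$, still fits inside $AF_{k_j}$, and simultaneously one must place enough $y_i$'s (namely $\gamma(K')+1$ of them) in the range $[0,\Psi_0^{(P)}]$ so that the II-modulus forces a near-collision. This is exactly what $\Psi_0^{(n+1)}=\Phi(\chi^M_g(\Psi_0^{(n)},K))$ encodes: $\Phi$ produces the approximate $F$-point at the current stage, $\chi^M_g$ pushes the required approximation quality high enough to cover the Fej\'er step at all previous anchors, and the monotone maximum $\chi^M_g$ (rather than $\chi_g$) is what allows the choice of anchor $m_i\le\Psi_0^{(i+1)}$ to be applied at any $j>i$ without knowing $m_i$ in advance.
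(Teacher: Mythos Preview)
Your proof is correct and follows essentially the same route as the paper's: construct a length-$(P+1)$ subsequence of approximate $F$-points at progressively stronger levels $k_i=\chi^M_g(\Psi_0^{(i)},K)$, apply the II-modulus of total boundedness at level $\alpha_G(K)$ to find two close terms $y_i,y_j$, and then invoke uniform $(G,H)$-Fej\'er monotonicity with anchor $m_i$ and witness $y_j$. The only cosmetic difference is that the paper builds the subsequence via the minimal approximate-$F$-point function $\varphi_F$ (so that $n_i=\Psi_0(i,k,g,\varphi_F,\chi,\beta_H)$ are exact indices) and only passes to the monotone majorant $\Phi$ at the very end to obtain the bound $N\le\Psi_0^{(P)}$, whereas you work with $\Phi$ throughout and pick the $m_i$ existentially; both bookkeeping schemes yield the same bound.
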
 
\begin{proof} Let $k\in\N$ and $g:\N\to\N$. For simplicity, let us denote with $\varphi$ the mapping $\varphi_F$ defined by \eqref{def-phi-F}. 
Since both $\varphi$ and $\Phi$ are nondecreasing and $\Phi$ majorizes $\varphi$, an immediate induction gives us that
$\Psi_0(n,k,g,\varphi,\chi,\beta_H)\leq \Psi_0(n+1,k,g,\varphi,\chi,\beta_H)$,  $\Psi_0(n,k,g,\Phi,\chi,\beta_H)\leq \Psi_0(n+1,k,g,\Phi,\chi,\beta_H)$ and
$\Psi_0(n,k,g,\varphi,\chi,\beta_H) \leq \Psi_0(n,k,g,\Phi,\chi,\beta_H)$ for all $n\in\N$. 

Define  for every $i\in\NN$ 
\beq 
n_i:=\Psi_0(i,k,g,\varphi,\chi,\beta_H). 
\eeq

{\bf Claim 1:} For all $j\ge 1$ and all $0\le i<j$,  $x_{n_j}$ is a $\chi_g(n_i,2\beta_H(2k+1)+1) $-approximate $F$-point.\\

{\bf Proof of claim:}  As $j\ge 1$ and
\bua 
n_j&=& \Psi_0(j,k,g,\varphi,\chi,\beta_H)= \varphi\left(\chi^M_g\left(\Psi_0(j-1,k,g,\varphi,\chi,\beta_H),2\beta_H(2k+1)+1\right)\right)\\
 &=& \varphi\left(\chi^M_g\left(n_{j-1},2\beta_H(2k+1)+1\right)\right),
 \eua
 $x_{n_j}$ is a $\chi^M_g(n_{j-1}, 2\beta_H(2k+1)+1)$-approximate $F$-point. Since $0\le i\le j-1$, we have that $n_i\leq n_{j-1}$. Apply now the fact that $\chi_g^M$ is nondecreasing in the first argument to get that
 \bua
  \chi_g(n_i, 2\beta_H(2k+1)+1) 
&\le &  \chi^M_g(n_i, 2\beta_H(2k+1)+1)\\
&\le &  \chi^M_g(n_{j-1}, 2\beta_H(2k+1)+1). \,\,\, \qquad \hfill \blacksquare
\eua

{\bf Claim 2:} There exist  $\ds 0\leq I< J \le P$ satisfying 
\[\forall l\in [n_I,n_I+g(n_I)] \, \left( d(x_l,x_{n_J})\le \frac1{2k+2}\right). \]
{\bf Proof of claim:}  By the property 
of $\gamma$ being a II-modulus of total boundedness for $X$ we get that there exist  $\ds 0\leq I< J \le P$ such that 
\[d(x_{n_I},x_{n_J})\le \frac1{\alpha_G(2\beta_H(2k+1)+1)+1}\] 
and so, using that $\alpha_G$ is a $G$-modulus, 
\beq
G(d(x_{n_I},x_{n_J}))\le \frac1{2\beta_H(2k+1)+2}. \label{q-thm-1-useful-1}
\eeq
By the first claim, we have that $x_{n_J}$ is a  
$\ds \chi_g(n_I,2\beta_H(2k+1)+1)$-approximate $F$-point.  Applying now  the uniform $(G,H)$-F\'{e}jer monotonicity of $(x_n)$  w.r.t. $F$ with 
$\ds r:=2\beta_H(2k+1)+1, n:=n_I, m:=g(n_I)$ and  $p:=x_{n_J}$,  we get that for all $l\le g(n_I)$,
\bua
H(d(x_{n_I+l},x_{n_J})) & \le & G(d(x_{n_I},x_{n_J}))+\frac1{2\beta_H(2k+1)+2} \le \frac1{\beta_H(2k+1)+1}.
\eua
\details{We have that $x_{n_J}$ is a $\ds \chi_g(n_I,2\beta_H(2k+1)+1)$-approximate $F$-point and 
$\ds \chi_g(n_I,2\beta_H(2k+1)+1)=\chi(n_I,g(n_I), 2\beta_H(2k+1)+1)$
}
Since $\beta_H$ is an $H$-modulus, 
\[ \forall l\le g(n_I) \ \left( d(x_{n_I+l},x_{n_J})\le \frac1{2k+2} \right).\]
 and so the claim is proved. \hfill $\blacksquare$\\[1mm]
It follows that 
\[ \forall k,l\in [n_I,n_I+g(n_I)] \ \left( d(x_k,x_l)\le \frac1{k+1}\right). \]

Since $n_I=\Psi_0(I,k,g,\varphi,\chi,\beta_H)\leq \Psi_0(I,k,g,\Phi,\chi,\beta_H)$ and $I\leq P$, we  get that
\[n_I\le \Psi_0(P,k,g,\Phi,\chi,\beta_H)=\Psi(k,g,\Phi,\chi,\alpha_G,\beta_H,\gamma). \]
The theorem holds with $N:=n_I$. 
\end{proof} 
{\bf Corollary to the proof:} One of the numbers $n_0,\ldots,n_{P-1}$ is a point 
of metastability. \\[-1mm]
\details{
\[
\Psi:\N\times\N^\N\times\N^\N\times \N^{N\times\N\times\N}\times\N^\N\times\N^\N\times\N^\N\to \N\]
defined by 
\beq \Psi(m,h,\Phi,\theta,\alpha,\beta,\gamma):=\Psi_0(P,m,h,\Phi,\theta,\beta),  \label{definition-Psi-general} 
\eeq
with $P=\gamma\left( \alpha\left(2\beta(2m+1)+1\right)\right), \theta_h(n,m):=\theta(n,h(n),m), \theta^M_h(n,m):=\max\{ \theta_h(i,m) \mid i\le n\}$ and
\[ \left\{ \ba{l} \Psi_0(0,m,h,\Phi,\theta,\beta):=0 \\ 
\Psi_0(n+1,m,h,,\Phi,\theta,\beta):=\Phi\left(\theta^M_h\left(\Psi_0(n,m,h,\Phi,\theta,\beta),2\beta(2m+1)+1\right)\right). \ea \right. \]  
The bound from theorem is $\Psi(k,g,\Phi,\chi,\alpha_G, \beta_H,\gamma)$.
}

Theorem \ref{main-step-general-GH-fejer} remarkably implies the 
Cauchy property of $(x_n)$ in the absence of $X$ being complete (and hence 
compact) and of $F$ being explicitly closed which, as we remarked after 
Proposition \ref{general-GH-fejer}, both were necessary if $(x_n)$ only 
was assumed to be $(G,H)$-Fej\'er monotone rather than being 
uniformly $(G,H)$-Fej\'er monotone. This is  a {\bf qualitative} 
improvement of Proposition \ref{general-GH-fejer} whose proof is  
based on our quantitative analysis of metastability although the result as 
such does not involve metastability at all:

\begin{corollary} \label{qualitative-corollary}
Let $X$ be totally bounded and $(x_n)$ be uniformly $(G,H)$-Fej\'er monotone 
having approximate $F$-points. Then $(x_n)$ is Cauchy.
\end{corollary}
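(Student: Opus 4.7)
My plan is to observe that the statement is an immediate consequence of the qualitative part of Theorem \ref{main-step-general-GH-fejer} — the clause ``$(x_n)$ is Cauchy'' that precedes the quantitative metastability bound in its conclusion. So everything reduces to verifying that the hypotheses of that theorem are met under the assumptions of the corollary. Total boundedness of $X$ supplies a II-modulus $\gamma$; uniform $(G,H)$-Fej\'er monotonicity furnishes a modulus $\chi$; the moduli $\alpha_G$ and $\beta_H$ are available by the standing assumptions $(G)$ and $(H)$ fixed at the beginning of Section \ref{section-quantitative}. Finally, the existence of approximate $F$-points ensures that the function $\varphi_F$ of \eqref{def-phi-F} is defined on all of $\N$, and its monotone nondecreasing hull $\varphi_F^M$ then serves as an approximate $F$-point bound $\Phi$. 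Feeding these data into Theorem \ref{main-step-general-GH-fejer} yields the Cauchy property of $(x_n)$ directly; the quantitative metastability bound is not needed, but would of course be available as a by-product.

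The content worth emphasizing, and the reason the corollary is worth stating separately, is that the proof of Theorem \ref{main-step-general-GH-fejer} nowhere uses completeness of $X$ nor any closedness property of $F$: the II-modulus $\gamma$ pigeonholes two indices $I<J\le P$ for which $x_{n_I}$ and $x_{n_J}$ lie close, and uniform Fej\'er monotonicity alone propagates this closeness across the entire block $[n_I,n_I+g(n_I)]$ to produce a metastability witness, from which Cauchyness follows by the standard classical argument (applied here for every $g$). This is precisely why dropping these hypotheses is harmless in the present setting, in contrast with Proposition \ref{general-GH-fejer}: there only plain (non-uniform) $(G,H)$-Fej\'er monotonicity was assumed, and its upgrade to the uniform version required the compactness and explicit closedness of $F$, as recalled in Remark \ref{remark-unif-Fejer-Fejer}. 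There is therefore no substantive obstacle to overcome; the corollary is the qualitative shadow of the already-established quantitative theorem, and its main conceptual role is to flag this gain in generality obtained by strengthening ``Fej\'er monotone'' to ``uniformly Fej\'er monotone''.
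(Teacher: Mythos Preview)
Your proposal is correct and matches the paper's approach exactly: the corollary is stated without a separate proof, as the qualitative content (``$(x_n)$ is Cauchy'') of Theorem~\ref{main-step-general-GH-fejer}, with the surrounding discussion emphasizing precisely the point you make about dispensing with completeness and explicit closedness. One trivial remark: $\varphi_F$ is already monotone nondecreasing (as noted right after \eqref{def-phi-F}), so taking its monotone hull is unnecessary, though harmless.
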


The next theorem is a direct quantitative `finitization' of Proposition \ref{general-GH-fejer}  in the sense of Tao:

\begin{theorem}\label{finitization-general-GH-Fejer} 
In addition to the assumptions of Theorem \ref{main-step-general-GH-fejer} we 
suppose that $F$ is uniformly closed with 
moduli $\delta_F,\omega_F.$ Then for all $k\in\N$ and all $g:\N\to\N$,
\[  \exists N\le 
\tilde{\Psi} 
\,\forall i,j\in [N,N+g(N)]\
 \left( d(x_i,x_j)\le \frac1{k+1} \text{~and~} x_i\in AF_k\right), \]
where 
\[ \tilde{\Psi}\left( k,g,\Phi,\chi,\alpha_G,\beta_H,\gamma, \delta_F, \omega_F\right):=
\Psi(k_0,g,\Phi,\chi_{k,\delta_F},\alpha_G, \beta_H,\gamma), \] with $\Psi$ defined as in Theorem \ref{main-step-general-GH-fejer}    
\[ k_0=\max\left\{ k,\left\lceil\frac{\omega_F(k)-1}{2}\right\rceil \right\} \text{~and~} \chi_{k,\delta_F}(n,m,r):=
\max\{\delta_F(k),\chi(n,m,r)\}.\]
\end{theorem}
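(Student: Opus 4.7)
The plan is to reduce this finitization theorem to Theorem \ref{main-step-general-GH-fejer} applied with two judicious modifications of the input data: replace $k$ by the larger value $k_0$ and replace $\chi$ by the thickened modulus $\chi_{k,\delta_F}$. The modification of $\chi$ forces the ``approximate $F$-point witness'' arising inside the proof of Theorem \ref{main-step-general-GH-fejer} to live already in $AF_{\delta_F(k)}$, while the modification of $k$ ensures that the metastable distance bound is tight enough to feed uniform closedness.

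First I would verify the trivial compatibility: since $\chi_{k,\delta_F}(n,m,r)\ge \chi(n,m,r)$ pointwise, the family of sets $AF_{\chi_{k,\delta_F}(n,m,r)}$ is contained in $AF_{\chi(n,m,r)}$, so $\chi_{k,\delta_F}$ is still a modulus of uniform $(G,H)$-Fej\'er monotonicity of $(x_n)$ w.r.t.\ $F$, and $\Phi$ is of course still an approximate $F$-point bound. Applying Theorem \ref{main-step-general-GH-fejer} with parameters $(k_0,g,\Phi,\chi_{k,\delta_F},\alpha_G,\beta_H,\gamma)$ therefore produces some $N\le \Psi(k_0,g,\Phi,\chi_{k,\delta_F},\alpha_G,\beta_H,\gamma)=\tilde{\Psi}$ such that $d(x_i,x_j)\le 1/(k_0+1)\le 1/(k+1)$ for all $i,j\in[N,N+g(N)]$; this already takes care of the Cauchy clause.

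For the new clause ``$x_i\in AF_k$'', I would reach into the proof of Theorem \ref{main-step-general-GH-fejer} rather than use only its statement. That proof exhibits $N=n_I$ together with an index $n_J$ such that (a) $x_{n_J}$ is a $(\chi_{k,\delta_F})_g(n_I,2\beta_H(2k_0+1)+1)$-approximate $F$-point, and (b) $d(x_l,x_{n_J})\le 1/(2k_0+2)$ for every $l\in[n_I,n_I+g(n_I)]$. By construction of $\chi_{k,\delta_F}$, the value in (a) is at least $\delta_F(k)$, so $x_{n_J}\in AF_{\delta_F(k)}$. For (b), the choice $k_0\ge \lceil (\omega_F(k)-1)/2\rceil$ yields $2k_0+2\ge \omega_F(k)+1$, and hence $d(x_l,x_{n_J})\le 1/(\omega_F(k)+1)$.

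Uniform closedness of $F$ with moduli $(\delta_F,\omega_F)$ then applies directly with $q:=x_{n_J}$ and $p:=x_l$, giving $x_l\in AF_k$ for every $l\in[N,N+g(N)]$, which finishes the proof. The only nontrivial point — and what one has to be careful about — is that the statement of Theorem \ref{main-step-general-GH-fejer} is not quite strong enough on its own: we must use its proof to retain the auxiliary witness $x_{n_J}\in AF_{\delta_F(k)}$ that is close to every point in the metastable window. The two arithmetic conditions packaged into $k_0$ (namely $k_0\ge k$ and $2k_0+2\ge\omega_F(k)+1$) and the pointwise thickening $\chi\mapsto \chi_{k,\delta_F}$ are precisely what is needed to align the output of that proof with the hypotheses of uniform closedness.
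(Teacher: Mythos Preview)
Your proposal is correct and follows essentially the same approach as the paper: replace $\chi$ by $\chi_{k,\delta_F}$ and $k$ by $k_0$, apply Theorem~\ref{main-step-general-GH-fejer}, and then reach into its proof to retrieve the witness $x_{n_J}\in AF_{\delta_F(k)}$ close to every point in the metastable window, feeding uniform closedness. The paper's write-up contains what appears to be a typo (it says ``$x_{n_I}$ is a $\delta_F(k)$-approximate $F$-point'' where $x_{n_J}$ is meant), but your version handles this correctly.
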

 \begin{proof}
  With $\chi$ also $\chi_{k,\delta_F}$ is a modulus of $(x_n)$ being uniformly $(G,H)$-Fej\'er 
monotone w.r.t. $F$.  Applying Theorem \ref{main-step-general-GH-fejer} to $(k_0,\chi_{k,\delta_F})$ we get that 
\[ \exists N\le 
\tilde{\Psi} 
\,\forall i,j\in [N,N+g(N)]\
 \left(d(x_i,x_j)\le \frac1{k_0+1}\le \frac1{k+1}\right). \]
 
 From the proof of Theorem \ref{main-step-general-GH-fejer}, it follows that there exists $0\le I<J\le P$ such that  $N=n_I$ and $x_{n_J}$ is a  
$(\chi_{k,\delta_F})_g(N, 2\beta_H(2k_0+1)+1)$-approximate $F$-point and 
 \[ \forall i\in [N,N+g(N)]\ \left( d(x_i,x_{n_J})\le  \frac1{2k_0+2} \le \frac1{\omega_F(k)+1}\right).\]
 Since $(\chi_{k,\delta_F})_g(N,2\beta_H(2k_0+1)+1) =\chi_{k,\delta_F}(N,g(N),2\beta_H(2k_0+1)+1) \geq \delta_F(k)$,  it follows that $x_{n_I}$ is a 
$\delta_F(k)$-approximate $F$-point. Hence by the definition of $\omega_F$, we get that $x_i\in AF_k$ for all  $i\in [N,N+g(N)]$.
  \end{proof}
  \details{The bound is $\Psi(k_0,g,\Phi,\chi_{k,\delta_F},\alpha_G, \beta_H,\gamma))$ where $\Psi$ is defined by \eqref{definition-Psi-general} .
  }
{\bf Notation:} 
In our applications $\delta_F$ will be mostly $\delta_F(k)=2k+1$.  In this 
case we simply write $\chi_k$ instead of $\chi_{k,\delta_F}$ when applying 
Theorem \ref{finitization-general-GH-Fejer}.  
  
\begin{remark}\label{finitization-boundedly-compact}
Theorems \ref{main-step-general-GH-fejer}  and \ref{finitization-general-GH-Fejer}  hold for $X$ boundedly compact and $(x_n)$ bounded. 
In this case, the bounds  will depend on a II-modulus of total boundedness for the 
closed ball $\ol{B}(a,b)$, where $a\in X$ and $b\geq d(x_n,a)$ for all $n$.
\end{remark}

\begin{remark} 
Theorem \ref{finitization-general-GH-Fejer}  is a finitization of Proposition \ref{general-GH-fejer} in the sense of Tao since it only talks about a finite 
initial segment of $(x_n)$ but trivially implies back the infinitary Proposition \ref{general-GH-fejer} for uniformly closed $F$ and 
uniformly $(G,H)$-Fej\'er monotone sequences.
\end{remark}
\begin{proof}  Noneffectively
\[
\forall k\in\NN\,\forall g:\NN\!\to\!\NN\,\exists N\in\NN 
\,\forall i,j\in [N,N+g(N)]\,
 \left( d(x_i,x_j)\le \frac1{k+1} \text{~and~} x_i\in AF_k \right) 
\]
implies the Cauchy property of $(x_n)$. Since $X$ is complete, $(x_n)$ converges   to a point $\widehat{x}\in X$. 
It remains to prove that $\widehat{x}\in F$. One can easily see, by taking 
$g$ to be a constant function, that $(x_n)$ has the liminf property w.r.t. $F$. 
\details{Let $k,n\in\N$. By taking $g_n:\N\to\N, g_n(m)=n$ for all $m\in\N$, we get $N\in\N$ such that $x_i\in AF_k$ for all $i\in[N,N+n]$. 
In particular, $x_{N+n}\in AF_k$ and $N+n\geq n$.}
Apply now Lemma \ref{F-unifcont-liminf-F} to conclude that  $\widehat{x}\in F$.
  \end{proof}

Assume that $(x_n)$ is asymptotically regular w.r.t. $F$.  A mapping $\Phi^+:\N\times \N^\N\to\N$ satisfying
\[\forall k\in\NN\,\forall g:\NN\to\NN\,\exists N\le\Phi^+(k,g) 
\,\forall m\in [N,N+g(N)]  \,\, (x_m\in AF_k)\]
is said to be a {\em rate of metastability} for the asymptotic regularity of $(x_n)$ w.r.t. $F$. A {\em rate} of asymptotic regularity of $(x_n)$ w.r.t. $F$  is a function $\Phi^{++}:\NN\to\NN$ with
\[
\forall k\in\NN \,\exists N\le \Phi^{++}(k) \,  \forall m\ge N \,\,   (x_m\in AF_k)
\]
Obviously, this is equivalent with the fact that $\Phi^{++}$ satisfies
\[
\forall k\in\NN \,\forall n \ge \Phi^{++}(k) \, \, (x_n\in AF_k). 
\]

If instead of an approximate $F$-point bound $\Phi$ for $(x_n)$ one has a rate of metastability $\Phi^+$ for the asymptotic regularity of $(x_n)$ w.r.t. $F$, then one can directly combine $\Psi$ from Theorem \ref{main-step-general-GH-fejer} and 
such a $\Phi^+$ into a bound $\Psi':=\Omega(\Psi,\Phi^+)$ satisfying the claim of Theorem \ref{finitization-general-GH-Fejer}  
without any uniform closedness assumption on $F$ and no need to use $\omega_F$.  The transformation $\Omega$ gets particularly simple if
instead of $\Phi^+$ we even have a rate $\Phi^{++}$ of asymptotic regularity w.r.t. $F$.\\

We first have to define one more case of the general majorization relation $\gtrsim$:

\begin{definition} A functional 
$\Phi:\N\times\N^\N\to \N$ is majorized by $\Phi^*:\N\times\N^\N\to\N$ 
(short $\Phi^* \gtrsim \Phi$)  if 
\[ \forall k\in\N\, \forall g:\N\to\N \,\, \left(k'\ge k \text{~and~}  g'\gtrsim_{\NN\to\NN} g\to \Phi^*(k',g')\ge \Phi^*(k,g),\Phi(k,g)\right).\] 
$\Phi$ is called selfmajorizing if $\Phi \gtrsim \Phi.$
\end{definition}

For any function $f:\N\to\N$ we shall denote 
$$f^M:\N\to\N, \quad f^M(n):=\max\{f(i)\mid i\leq n\}.$$
Then, as remarked in Section \ref{logical-meta-tb-metric}, $f^M\gtrsim_{\N\to\N} f$.\\

In the sequel, $k\in\N$ and $g:\N\to\N$. We define the following functionals:

\be
\item $g^*:\N\to\N, \quad g^*(n)=n+g^M(n)$;
\item for every $l\in\N$,\,\, $\tilde{g}_l:\N\to\N, \quad \tilde{g}_l(m):=g^*(\max\{l,m\})$;
\item for every $\delta:\N\times\N^\N\to \N$, 
\beq
h_{k,g,\delta}:\N\to \N, \quad h_{k,g,\delta}(n):= g^*(\max\{n,\delta(k, \tilde{g}_n)\}).
\eeq
\item $\Omega_{k,g}:(\N\times\N^\N\to \N)\times (\N\times\N^\N\to \N)\to \N$, defined for every $\delta,\theta:\N\times\N^\N\to \N$, by 
\beq
\Omega_{k,g}(\delta,\theta):=\max\{\delta(k,h_{k,g,\theta}), \theta(k,\tilde{g}_{\delta(k,h_{k,g,\theta})})\};
\eeq
\item for every $l\in\N$,\,\, $g_l(n):\N\to\N, \quad g_l(n):=g^M(n+l)+l$;
\item $\tilde{\Omega}_{k,g}:(\N\times\N^\N\to \N)\times \N^\N\to \N$, defined for every $\delta:\N\times\N^\N\to \N, \, f:\N\to\N$, by 
\beq
\tilde{\Omega}_{k,g}(\delta,f):= \delta(k,g_{f(k)})+f(k).
\eeq
\ee

One can easily verify the following 

\begin{lemma}\label{majorant-prop-NNN}
\be
\item For all $l,l^*\in\N$, $l^*\geq l$ implies $\tilde{g}_{l^*}\gtrsim \tilde{g}_{l}$ and  $g_{l^*}\gtrsim g_l$.
\item For all $\delta,\delta^*:\N\times\N^\N\to \N$, $\delta^*\gtrsim \delta$ implies $h_{k,g,\delta^*}\gtrsim h_{k,g,\delta}$.
\item\label{majorant-prop-NNN-omega} For all $\delta,\delta^*,\theta,\theta^*:\N\times\N^\N\to \N$,  if $\delta^*\gtrsim \delta$ and $\theta^*\gtrsim \theta$, then $\Omega_{k,g}(\delta^*,\theta^*)\geq \Omega_{k,g}(\delta,\theta)$.
\item\label{majorant-prop-NNN-omega-tilde} For all $\delta,\delta^*:\N\times\N^\N\to \N$, $f,f^*:\N\to\N$ if $\delta^*\gtrsim \delta$ and $f^*\gtrsim f$, 
then $\tilde{\Omega}_{k,g}(\delta^*,f^*)\geq \tilde{\Omega}_{k,g}(\delta,f)$.
\ee
\end{lemma}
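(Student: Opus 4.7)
The proof is a bookkeeping exercise in the definition of $\gtrsim$, and the whole lemma unfolds in a strict chain, with each part feeding the next. Let me sketch the order and what the key observations will be.

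The preliminary observation I would make is that $g^M$, and hence $g^\ast(n)=n+g^M(n)$, is monotone nondecreasing in $n$. This is the only real analytic content used throughout; everything else is monotonicity of $\max$ and chasing the hypothesis $\Phi^\ast \gtrsim \Phi$, which already packages (a) $\Phi^\ast$ is self-majorizing and (b) $\Phi^\ast$ dominates $\Phi$ pointwise after monotonization in both arguments.

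For part (i) I would argue directly from the definitions. For $\tilde{g}_l$, given $m^\ast\ge m$ and $l^\ast\ge l$, one has $\max\{l^\ast,m^\ast\}\ge \max\{l^\ast,m\},\max\{l,m\}$, and applying the monotone nondecreasing map $g^\ast$ yields $\tilde{g}_{l^\ast}(m^\ast)\ge \tilde{g}_{l^\ast}(m),\tilde{g}_l(m)$. For $g_l$, since $n^\ast+l^\ast\ge n+l^\ast\ge n+l$ and $l^\ast\ge l$, monotonicity of $g^M$ gives $g_{l^\ast}(n^\ast)\ge g_{l^\ast}(n),g_l(n)$. Both conclusions are exactly the definition of $\gtrsim_{\N\to\N}$.

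Part (ii) will reduce to part (i). Expand
\[h_{k,g,\delta^\ast}(n^\ast)=g^\ast(\max\{n^\ast,\delta^\ast(k,\tilde{g}_{n^\ast})\}).\]
By part (i), $n^\ast\ge n$ implies $\tilde{g}_{n^\ast}\gtrsim \tilde{g}_n$. Applying $\delta^\ast\gtrsim \delta$ with first argument $k$ (using $k\ge k$) and second arguments $\tilde{g}_{n^\ast}\gtrsim \tilde{g}_n$ yields $\delta^\ast(k,\tilde{g}_{n^\ast})\ge \delta^\ast(k,\tilde{g}_n),\delta(k,\tilde{g}_n)$. Combined with $n^\ast\ge n$, the $\max$ dominates both of the corresponding $\max$-expressions inside $h_{k,g,\delta^\ast}(n)$ and $h_{k,g,\delta}(n)$. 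Monotonicity of $g^\ast$ closes this part.

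Parts (iii) and (iv) are then immediate from parts (i), (ii). For (iii), apply (ii) to obtain $h_{k,g,\theta^\ast}\gtrsim h_{k,g,\theta}$; then $\delta^\ast\gtrsim \delta$ gives $A^\ast:=\delta^\ast(k,h_{k,g,\theta^\ast})\ge \delta(k,h_{k,g,\theta})=:A$, so by (i) $\tilde{g}_{A^\ast}\gtrsim \tilde{g}_A$, whence $\theta^\ast(k,\tilde{g}_{A^\ast})\ge \theta(k,\tilde{g}_A)$. Taking $\max$ of both dominations yields $\Omega_{k,g}(\delta^\ast,\theta^\ast)\ge \Omega_{k,g}(\delta,\theta)$. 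For (iv), $f^\ast\gtrsim f$ in particular gives $f^\ast(k)\ge f(k)$, so by (i) $g_{f^\ast(k)}\gtrsim g_{f(k)}$, and $\delta^\ast\gtrsim \delta$ then yields $\delta^\ast(k,g_{f^\ast(k)})\ge \delta(k,g_{f(k)})$; adding the inequality $f^\ast(k)\ge f(k)$ gives the claim.

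I do not expect a real obstacle: the only place where one has to be slightly careful is in (ii), where the argument $\tilde{g}_n$ of $\delta^\ast$ itself depends on $n$, so one must invoke part (i) before invoking $\delta^\ast\gtrsim \delta$, and then use the self-majorizing clause of $\gtrsim$ on $\delta^\ast$ to combine both inequalities inside a single $\max$.
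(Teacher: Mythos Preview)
Your proposal is correct and follows essentially the same approach as the paper: both unwind the definitions, use the monotonicity of $g^M$ (and hence of $g^*$) together with monotonicity of $\max$, and then chain parts (i)$\to$(ii)$\to$(iii),(iv) exactly as you describe. The paper's argument is identical in structure, including the observation that in (ii) one first invokes (i) to get $\tilde g_{n^*}\gtrsim \tilde g_n$ before applying $\delta^*\gtrsim\delta$.
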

\details{
\be
\item Let $l^*\geq l$. We have that $\tilde{g}_{l^*}\gtrsim_{\N\to\N} \tilde{g}_{l}$

\begin{tabular}{l}
  iff for every $n^*\geq n$, $\tilde{g}_{l^*}(n^*)\geq \tilde{g}_{l^*}(n), \tilde{g}_{l}(n)$\\
 iff for every $n^*\geq n$, $g^*(\max\{l^*,n^*\})\geq g^*(\max\{l^*,n\}), g^*(\max\{l,n\})$,
\end{tabular}

We have that $g_{l^*}\gtrsim_{\N\to\N} g_{l}$

\begin{tabular}{l}
  iff for every $n^*\geq n$, $g_{l^*}(n^*)\geq g_{l^*}(n), g_{l}(n)$\\
 iff for every $n^*\geq n$, $g^M(n^*+l^*)+l^*\geq g^M(n+l^*)+l^*, g^M(n+l)+l$,
\end{tabular}
which is true.
\item We have that $h_{k,g,\delta^*}\gtrsim_{\N\to\N} h_{k,g,\delta}$

\begin{tabular}{l}
  iff for every $n^*\geq n$,  $h_{k,g,\delta^*}(n^*)\geq h_{k,g,\delta^*}(n), h_{k,g,\delta}(n)$\\
 iff for every $n^*\geq n$, 
$g^*(\max\{n^*,\delta^*(k, \tilde{g}_{n^*})\} \geq g^*(\max\{n,\delta^*(k, \tilde{g}_{n})\}, g^*(\max\{n,\delta(k, \tilde{g}_{n})\}$  
\end{tabular}
Since $\delta^*\gtrsim \delta$, $k\ge k$ and $\tilde{g}_{n^*}\gtrsim_{\N\to\N}\tilde{g}_{n}$, it follows that 
\[\delta^*(k, \tilde{g}_{n^*})\geq \delta^*(k, \tilde{g}_{n}), \delta(k, \tilde{g}_{n}).\]
Use again that  $g^*$ is nondecreasing. \
\item Assume that $\delta^*\gtrsim \delta$ and $\theta^*\gtrsim \theta$. It follows that 
 $h_{k,g,\theta^*}\gtrsim_{\N\to\N} h_{k,g,\theta}$, hence $\delta^*(k,h_{k,g,\theta^*})\geq \delta(k,h_{k,g,\theta})$. Furthermore,
$\tilde{g}_{\delta^*(k,h_{k,g,\theta^*})} \gtrsim_{\N\to\N} \tilde{g}_{\delta(k,h_{k,g,\theta})}$, hence 
$\theta^*(k,\tilde{g}_{\delta^*(k,h_{k,g,\theta^*})})\geq \theta(k,\tilde{g}_{\delta(k,h_{k,g,\theta})})$. 
\item   Assume that $\delta^*\gtrsim \delta$ and $f^*\gtrsim f$. Then $f^*(k)\geq f(k)$. Furthermore, $g_{f^*(k)}\gtrsim_{\N\to\N}$. Hence
$\delta^*(k,g_{f^*(k)})\geq \delta(k,g_{f(k)})$. 
\ee
}

\begin{theorem}\label{Cauchy+as-reg-metastability}
Let $(x_n)$ be a Cauchy sequence with a selfmajorizing rate of metastability $\Psi$. 
\be
\item\label{Carm-meta} Assume that $(x_n)$ is asymptotically regular w.r.t. $F$, with $\Phi^+$ being a selfmajorizing rate of metastability for the asymptotic regularity. 
Then for all $k\in\N$ and all $g:\N\to\N$,
\[\exists N\le \Omega
\,\forall i,j\in [N,N+g(N)]\
 \left( d(x_i,x_j)\le \frac1{k+1} \text{~and~} x_i\in AF_k\right), \]
where $\Omega(k,g,\Psi,\Phi^+):=\Omega_{k,g}(\Psi,\Phi^+)$.
\item\label{Carm-asreg} Assume that $(x_n)$ is asymptotically regular w.r.t. $F$, with $\Phi^{++}$ being a rate of asymptotic regularity.
Then for all $k\in\N$ and all $g:\N\to\N$,
\[ \exists N\le \tilde{\Omega}
\,\forall i,j\in [N,N+g(N)]\,\forall m\ge N\
 \left( d(x_i,x_j)\le \frac1{k+1}  \text{~and~}  x_m\in AF_k\right),\]  
where $\tilde{\Omega}(k,g,\Psi,\Phi^{++}):=\tilde{\Omega}_{k,g}(\Psi,(\Phi^{++})^M)$.
\ee
\end{theorem}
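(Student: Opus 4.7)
The plan is to combine the two rates by nesting the arguments, so that the Cauchy metastability interval is large enough to contain both an asymptotic regularity metastability interval and the target interval $[N,N+g(N)]$. The auxiliary functions $h_{k,g,\delta}$, $\tilde g_l$ and $g_l$ are designed exactly for this nesting: $\tilde g_l$ and $g_l$ stretch $g$ so that any interval $[N,N+g(N)]$ with $N\ge l$ fits inside a suitable interval starting earlier, and $h_{k,g,\delta}$ pre-compensates for the blow-up of $g$ caused by the subsequent asymptotic regularity step.

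For part (i), I would first apply $\Psi$ to $(k,h_{k,g,\Phi^+})$ to produce some $N_0\le \Psi(k,h_{k,g,\Phi^+})$ with $d(x_i,x_j)\le 1/(k+1)$ for all $i,j\in [N_0,N_0+h_{k,g,\Phi^+}(N_0)]$. Then I would apply $\Phi^+$ to $(k,\tilde g_{N_0})$ to produce some $M_0\le \Phi^+(k,\tilde g_{N_0})$ with $x_m\in AF_k$ for all $m\in [M_0,M_0+\tilde g_{N_0}(M_0)]$. Setting $N:=\max\{N_0,M_0\}$, the definitions of $\tilde g_{N_0}$ and $h_{k,g,\Phi^+}$ together with $g^M\ge g$ and the monotonicity of $g^*$ yield the two key inclusions $[N,N+g(N)]\subseteq [M_0,M_0+\tilde g_{N_0}(M_0)]$ and $[N,N+g(N)]\subseteq [N_0,N_0+h_{k,g,\Phi^+}(N_0)]$, from which both conclusions follow on $[N,N+g(N)]$. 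To bound $N\le \Omega_{k,g}(\Psi,\Phi^+)$, I would use Lemma~\ref{majorant-prop-NNN}(i) (which gives $\tilde g_{\Psi(k,h_{k,g,\Phi^+})}\gtrsim \tilde g_{N_0}$ since $N_0\le \Psi(k,h_{k,g,\Phi^+})$) together with the selfmajorization of $\Phi^+$ to majorize $\Phi^+(k,\tilde g_{N_0})$ by $\Phi^+(k,\tilde g_{\Psi(k,h_{k,g,\Phi^+})})$, which matches the second term in the definition of $\Omega_{k,g}$.

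Part (ii) is simpler. I would set $f:=(\Phi^{++})^M$, which is nondecreasing and still a rate of asymptotic regularity, and then apply $\Psi$ to $(k,g_{f(k)})$ to obtain $N_0\le \Psi(k,g_{f(k)})$ witnessing Cauchyness on $[N_0,N_0+g_{f(k)}(N_0)]$. Setting $N:=N_0+f(k)$, the identity $g_{f(k)}(N_0)=g^M(N_0+f(k))+f(k)$ immediately gives the inclusion $[N,N+g(N)]\subseteq [N_0,N_0+g_{f(k)}(N_0)]$, while $N\ge f(k)\ge \Phi^{++}(k)$ ensures $x_m\in AF_k$ for every $m\ge N$. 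The bound $N\le \tilde\Omega_{k,g}(\Psi,f)$ is then direct from the definition.

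The main obstacle is not depth but care: it consists in choosing the auxiliary input functions $h_{k,g,\Phi^+}$ and $\tilde g_{N_0}$ (resp.\ $g_{f(k)}$) so that the Cauchy witness $N_0$ and the asymptotic regularity witness $M_0$ land in compatible intervals, and, in part (i), in the controlled use of selfmajorization to convert the implicit dependence on the unknown $N_0$ inside $\Phi^+(k,\tilde g_{N_0})$ into the explicit upper bound appearing in $\Omega_{k,g}$.
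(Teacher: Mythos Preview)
Your proposal is correct and follows essentially the same nesting strategy as the paper: apply the Cauchy rate to a suitably inflated counterfunction, then the asymptotic-regularity rate, and take $N$ to be the maximum (resp.\ sum) of the two witnesses. The only cosmetic difference is that the paper first introduces the minimal-witness functionals $\psi$ and $\varphi^+$ (searching for the least actual metastability point below $\Psi,\Phi^+$), sets $N:=\Omega_{k,g}(\psi,\varphi^+)$ exactly, obtains the interval equalities $h_{k,g,\varphi^+}(N_0)=\tilde g_{N_0}(\varphi^+(k,\tilde g_{N_0}))=g^*(N)$, and only afterwards invokes Lemma~\ref{majorant-prop-NNN}(\ref{majorant-prop-NNN-omega}) with the selfmajorization of $\Psi,\Phi^+$ to bound $N\le\Omega_{k,g}(\Psi,\Phi^+)$; you instead work directly with $\Psi,\Phi^+$ and use inequalities throughout, appealing to Lemma~\ref{majorant-prop-NNN}(i) and the selfmajorization of $\Phi^+$ to control $\Phi^+(k,\tilde g_{N_0})$. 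Both routes lead to the same bound, and your more direct version avoids the auxiliary $\psi,\varphi^+$ at the price of replacing a couple of equalities by easy inequalities.
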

\begin{proof}   
\be
\item Let $\psi, \varphi^+:\N\times \N^\N\to\N$ 
be the functionals which, given $l\in\N$ and $\delta:\N\to\N$,  search for the least 
actual point $n$ of metastability upper bounded by $\Psi(l,\delta),\Phi^+(l,\delta)$ 
respectively. Thus, we have that $\psi(l,\delta)\leq \Psi(l,\delta)$, $\varphi^+(l,\delta)\leq\Phi^+(l,\delta)$,
\beq
\forall i,j\in [\psi(l,\delta),\psi(l,\delta)+\delta(\psi(l,\delta))]\,\,
     \left( d(x_i,x_j)\le \frac1{l+1} \right) \label{thm-self-1}
\eeq
and
\beq
\forall i\in [\varphi^+(l,\delta),\varphi^+(l,\delta)+\delta(\varphi^+(l,\delta))]\,\,
     \left( x_i\in AF_l\right). \label{thm-self-2}
\eeq
Let us take 
\[N:=\Omega_{k,g}(\psi,\varphi^+).\]
{\bf Claim:} \[ \forall i,j\in [N,N+g(N)]\, 
 \left( d(x_i,x_j)\le \frac1{k+1} \text{~and~} x_i\in AF_k\right). \]
{\bf Proof of claim:} Let $N_0:=\psi(k,h_{k,g,\varphi^+})$. Then 
$N=\max\{N_0,\varphi^+(k,\tilde{g}_{N_0})\}.$
Apply \eqref{thm-self-1} with $l:=k$ and $\delta:=h_{k,g,\varphi^+}$ and \eqref{thm-self-2} to $l:=k$ and $\delta:=\tilde{g}_{N_0}$ to get that
\beq
\forall i,j\in [N_0,N_0+h_{k,g,\varphi^+}(N_0)]\,\,
     \left( d(x_i,x_j)\le \frac1{k+1} \right) \label{thm-self-1star}
\eeq
and
\beq
\forall i\in [\varphi^+(k,\tilde{g}_{N_0}),\varphi^+(k,\tilde{g}_{N_0})+\tilde{g}_{N_0}(\varphi^+(k,\tilde{g}_{N_0}))]\,\,
     \left( x_i\in AF_k\right). \label{thm-self-2star}
\eeq
Remark now that  $N$ is an upper bound for both $N_0$ and $\varphi^+(k,\tilde{g}_{N_0})$ and, furthermore, that
\bua
h_{k,g,\varphi^+}(N_0) &=& \tilde{g}_{N_0}(\varphi^+(k,\tilde{g}_{N_0}))=  g^*(\Omega_{k,g}(\psi,\varphi^+))\\
&\geq & \Omega_{k,g}(\psi,\varphi^+)+g(\Omega_{k,g}(\psi,\varphi^+))=N+g(N).
\eua
The claim follows. $\qquad \hfill \blacksquare$

Since $\Psi,\Phi^+$ are selfmajorizing and bounds for $\psi,\phi^+$ they 
are majorants for $\psi,\phi^+$. Apply now Lemma \ref{majorant-prop-NNN}.\eqref{majorant-prop-NNN-omega} to conclude that 
$N=\Omega_{k,g}(\psi,\varphi^+)\leq \Omega_{k,g}(\tilde{\Psi},\Phi^+)(g)$.
\item Let $\psi:\N\times \N^\N\to\N$ be as in (i) and take 
\[N:=\tilde{\Omega}_{k,g}(\psi,\Phi^{++}).\]

Apply \eqref{thm-self-1} with 
$l:=k$ and $\delta:=g_{\Phi^{++}(k)}$  and denote $N_0:=\psi(k,g_{\Phi^{++}(k)})$ to get that
\[
\forall i,j\in [N_0,N_0+g_{\Phi^{++}(k)}(N_0)]\,\,
     \left( d(x_i,x_j)\le \frac1{k+1} \right)
\]
Remark that $N_0\leq N$ and that
\bua
N_0+g_{\Phi^{++}(k)}(N_0) &=& \tilde{\Omega}(k,g,\psi,\Phi^{++})+g^M(\tilde{\Omega}(k,g,\psi,\Phi^{++}))\\
& = & N+g^M(N) \geq  N+g(N).
\eua
Furthermore, since $N\geq \Phi^{++}(k)$ and  $\Phi^{++}$ is a rate of asymptotic regularity w.r.t. $F$, we get that $x_m\in AF_k$ for all $m\geq N$.
The fact that $N\leq \tilde{\Omega}_{k,g}(\Psi,(\Phi^{++})^M)$ follows immediately, using Lemma \ref{majorant-prop-NNN}.\eqref{majorant-prop-NNN-omega-tilde}. 
\ee
\end{proof}

In fact, one may also swap the roles of $\Psi$ and $\Phi^+$ in the definition of $\Omega$ and, in practice, one  has to check which one results 
in a better bound. Furthermore, the assumption that $\Psi,\Phi^+$ are selfmajorizing can always been achieved for bounds 
$\Psi,\Phi^+$ extracted via the proof-theoretic methods presented in Section \ref{logic-section}.

\begin{corollary}\label{cor-self-maj}
Let $(x_n)$ be a uniformly $(G,H)$-Fej\'er monotone sequence with 
modulus $\chi$ which is asymptotically regular with a selfmajorizing 
rate of metastability 
for the asymptotic regularity $\Phi^+.$ 
For each, $\alpha_G,\beta_H,\gamma :\N\to\N$ and $\chi:\N^3\to\N$ 
define the functional 
\[ \Psi^+(k,g):=\Psi(k,g,\Phi,\chi^M,\alpha^M_G,\beta^M_H,\gamma^M),\] where 
$\Psi$ is the bound from 
Theorem \ref{main-step-general-GH-fejer}, $\Phi(k):=\Phi^+(k,0)$ and
\[\chi^M(n,m,k):=\max\{ \chi(\tilde{n},\tilde{m},\tilde{k}) \mid
\tilde{n}\le n, \tilde{m}\le m, \tilde{k}\le k\}).\]
Then for all $k\in\N, g:\N\to\N$ 
\[ \exists N\le\Omega(k,g,\Psi^+,\Phi^+)\,\forall i,j\in [N,N+g(N)] \ 
\left(d(x_i,x_j)\le \frac1{k+1}\ \mbox{and} \ x_i \in AF_k\right). \]
Similarly for $\tilde{\Omega}$ instead of $\Omega$ (with $\Phi:=(\Phi^{++})^M$) 
where we then even have $\forall m\ge N (x_m\in AF_k).$
\end{corollary}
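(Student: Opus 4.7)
The plan is to reduce the corollary to Theorem \ref{Cauchy+as-reg-metastability} by constructing from the given data a selfmajorizing rate of metastability $\Psi^+$ for the Cauchy property of $(x_n)$, and then invoking that theorem with $\Psi := \Psi^+$ and $\Phi^+$ (resp.\ $\Phi^{++}$) in its original role. First, I would extract an approximate $F$-point bound from $\Phi^+$ by instantiating the metastability statement for asymptotic regularity with the constant-zero function for $g$: this yields $\exists N \le \Phi^+(k,0)$ with $x_N \in AF_k$, so $\Phi(k) := \Phi^+(k,0)$ is the required bound, and it is nondecreasing in $k$ because $\Phi^+$ is selfmajorizing. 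For the $\tilde{\Omega}$ variant, the choice $\Phi(k) := (\Phi^{++})^M(k)$ is a nondecreasing approximate $F$-point bound directly from the definition of a rate of asymptotic regularity.

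Next, I would apply Theorem \ref{main-step-general-GH-fejer} with $\chi,\alpha_G,\beta_H,\gamma$ replaced by their pointwise-maximized counterparts $\chi^M,\alpha_G^M,\beta_H^M,\gamma^M$. Replacing a modulus by a pointwise upper bound only enlarges it, and each enlarged version still witnesses the relevant existential quantifier; hence they remain valid moduli of uniform $(G,H)$-Fej\'er monotonicity, a $G$-modulus, an $H$-modulus, and a II-modulus of total boundedness, respectively. The conclusion of Theorem \ref{main-step-general-GH-fejer} then asserts that the functional $\Psi^+(k,g) := \Psi(k,g,\Phi,\chi^M,\alpha_G^M,\beta_H^M,\gamma^M)$ of the statement is a rate of metastability for the Cauchy property of $(x_n)$.

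The main obstacle is to verify that $\Psi^+$ is selfmajorizing, i.e.\ $\Psi^+ \gtrsim \Psi^+$ in the sense of the definition preceding the corollary. Inspecting Theorem \ref{main-step-general-GH-fejer}, the value $\Psi^+(k,g)$ is obtained by iterating $\Phi \circ \chi^M_g\!\left(\cdot,\,2\beta_H^M(2k+1)+1\right)$ to depth $P = \gamma^M\!\left(\alpha_G^M(2\beta_H^M(2k+1)+1)\right)$, starting at $0$, where $\chi^M_g(n,r) = \max_{i\le n}\chi^M(i,g(i),r)$. All five ingredients $\Phi,\chi^M,\alpha_G^M,\beta_H^M,\gamma^M$ are nondecreasing in every coordinate by construction, and the assignment $g \mapsto \chi^M_g(n,r)$ is monotone under $\gtrsim_{\N\to\N}$ because $g' \gtrsim g$ forces $g'(i) \ge g(i)$ pointwise and $\chi^M$ is nondecreasing in its middle argument. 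A straightforward induction on the iteration depth $n \le P$ then shows that $\Psi_0$ is nondecreasing in $k$ and in $g$ (and $P$ itself grows with $k$), which is precisely $\Psi^+ \gtrsim \Psi^+$.

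With $\Psi^+$ so constructed, Theorem \ref{Cauchy+as-reg-metastability}(i) applied to the selfmajorizing pair $(\Psi^+,\Phi^+)$ immediately delivers the bound $\Omega(k,g,\Psi^+,\Phi^+) = \Omega_{k,g}(\Psi^+,\Phi^+)$ together with the conjunction $d(x_i,x_j) \le 1/(k+1)$ and $x_i \in AF_k$ for all $i,j \in [N,N+g(N)]$. The $\tilde{\Omega}$ variant is obtained identically by applying Theorem \ref{Cauchy+as-reg-metastability}(ii) to $\Psi^+$ (built with $\Phi := (\Phi^{++})^M$) and the given rate of asymptotic regularity $\Phi^{++}$, and the stronger conclusion $x_m \in AF_k$ for all $m \ge N$ is then automatic from part (ii) of that theorem.
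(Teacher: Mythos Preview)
Your proposal is correct and follows essentially the same route as the paper's proof: observe that the $M$-versions of $\chi,\alpha_G,\beta_H,\gamma$ remain valid moduli, that $\Phi(k):=\Phi^+(k,0)$ is an approximate $F$-point bound, then apply Theorem~\ref{main-step-general-GH-fejer} to obtain $\Psi^+$ as a rate of metastability and finish by invoking Theorem~\ref{Cauchy+as-reg-metastability}. The paper merely asserts that $\Psi^+$ is selfmajorizing, whereas you supply the verification via the monotonicity of the ingredients and induction on the iteration depth; this added detail is sound and fills in exactly what the paper leaves implicit.
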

\begin{proof} With $\chi,\alpha_G,\beta_H,\gamma$ also $\chi^M,\alpha^M_G,\beta^M_H,\gamma^M$ 
are a modulus of uniform $(G,H)$-Fej\'er monotonicity, $G,H$-moduli and a 
II-modulus of total boundedness, respectively. Moreover, $\Phi$ is an approximate $F$-point 
bound for $(x_n).$ 
Hence, by Theorem \ref{main-step-general-GH-fejer}, $\Psi^+$ is a rate of 
metastability for $(x_n)$ which, moreover, is selfmajorizing. The claim now 
follows from Theorem 
\ref{Cauchy+as-reg-metastability}.
\end{proof}
We conclude this section with a trivial but instructive example for 
Theorem \ref{main-step-general-GH-fejer}, 
namely that the well-known rate of metastability for 
the Cauchy property of monotone bounded sequences from \cite[Proposition 2.27]{Kohlenbach(book)} 
can be recovered (modulo a constant) from this theorem:
let $X=[0,1]$ and $(x_n)$ be a nondecreasing sequence in $X$. Let us take
\[
F= \bigcap_{k\in\N}\tilde{F}_k, \quad \text{where } \tilde{F}_k:=\{p\in X \mid x_k\le p\}.
\]
Then, clearly, (i) $AF_k=\tilde{F}_k,$ (ii) $\Phi^{++}:=id$ 
is a rate of asymptotic regularity and $\chi(n,m,r):=n+m$ is a modulus 
of the uniform Fej\'er monotonicity of $(x_n)$ and we may take 
$\gamma(k)= k+1$ (see Example \ref{ex-tb-01}). 
For monotone $g,$ Theorem \ref{main-step-general-GH-fejer} now gives 
$\Psi(k,g):=\tilde{g}^{4(k+1)}(0)$ with $\tilde{g}(n):=n+g(n),$ while the 
direct proof in this case yields $\Psi(k,g):=\tilde{g}^{k}(0).$ 

\section{Quasi-Fej\'er monotone sequences} \label{section-quasi}

As a common consequence of arriving at a finitary quantitative version of 
an originally non-quan-titative theorem, one can easily incorporate error 
terms as has been considered under the name of quasi-Fej\'er monotonicity 
(due to \cite{Ermolev}). As pointed out in \cite{Com01}, quasi-Fej\'er monotone sequences  provide a framework  for the analysis of numerous optimization algorithms in Hilbert spaces.

\begin{definition} A sequence $(x_n)$ in a metric space $(X,d)$ is called 
quasi-Fej\'er monotone (of order $0<P<\infty$) 
w.r.t. some set $\emptyset\not= F\subseteq X$ if 
\[ \forall n\in\N\,\forall p\in F\ \big( d(x_{n+1},p)^P \le d(x_n,p)^P +
\varepsilon_n\big), \] 
where $(\varepsilon_n)$ is some summable sequence in $\R_+$.
\end{definition} 

The appropriate generalization to general functions $(G,H)$ then is:
\begin{definition} For $G,H$ as in the definition of $(G,H)$-Fej\'er 
monotonicity we say that $(x_n)$ is quasi-$(G,H)$-Fej\'er monotone w.r.t. $F$ if 
\[ \forall n,m\in\N\,\forall p\in F\ \big( H(d(x_{n+m},p))\le G(d(x_n,p))+
\sum_{i=n}^{n+m-1} \varepsilon_i\big).\] 
\end{definition} 
Note that for $G(x):=H(x):=x^P$ this covers the notion of quasi-Fej\'er monotonicity.
\\ The uniform version of this notion then is:
\begin{definition} 
$(x_n)$ is uniformly quasi-$(G,H)$-Fej\'er monotone w.r.t. $F$ and a given 
representation of $F$ via $AF_k$ as before if 
\[ \ba{l} \forall r,n,m\in\N\,\exists k\in\N\,\forall p\in X \,\big( 
p\in AF_k\to \\ \hspace*{1cm} 
\forall l\le m (H(d(x_{n+l},p))<G(d(x_n,p))+\sum_{i=n}^{n+m-1} \varepsilon_i +\frac{1}{r+1})\big).\ea \]
Any function $\chi:\N^3\to \N$ such that $\chi(r,n,m)$ provides such a $k$ is 
called a modulus of $(x_n)$ being uniformly quasi-$(G,H)$-Fej\'er monotone w.r.t. $F.$
\end{definition}
Let $\xi:\N\to\N$ be a Cauchy modulus of $\sum\varepsilon_i,$ 
i.e. $\sum\limits_{i=\xi(n)}^{\infty} \varepsilon_i <\frac{1}{n+1}$ for all $n\in\N.$ 
\\ If $(x_n)$ has the $\liminf$-property w.r.t. $F$ we can define 
\[ \widehat{\varphi}_F(k,n):=\min\{ m\in\N \mid m\ge n\wedge x_m\in AF_k\}. \]
Any monotone (in $k,n$) upper bound $\widehat{\Phi}$ of 
$\widehat{\varphi}_F$ is 
called a $\liminf$-bound w.r.t. $F.$
\begin{theorem}\label{main-step-general-quasi-GH-fejer}
Assume that
\be
\item $(x_n)$ is uniformly quasi-$(G,H)$-Fej\'er monotone w.r.t. $F$,  with modulus $\chi$, and $(\varepsilon_n)$ with Cauchy rate $\xi$ for 
$\sum \varepsilon_i$; 
\item $(x_n)$ has the $\liminf$-property w.r.t. $F$, with $\widehat{\Phi}$ 
being a $\liminf$-bound w.r.t. $F$.
\ee
Then $(x_n)$ is Cauchy and, moreover, for all $k\in\N$ and all $g:\N\to\N$,
\[\exists N\le \widehat{\Psi}(k,g,\Phi,\chi,\alpha_G,\beta_H,\gamma,\xi) 
\,\forall i,j\in [N,N+g(N)]\ 
\left(d(x_i,x_j)\le \frac1{k+1}\right), \]
where $\widehat{\Psi}(k,g,\Phi,\chi,\alpha_G,\beta_H,\gamma,\xi):=
\widehat{\Psi}_0(P,k,g,\Phi,\chi,\beta_H,\xi)$, with 
\[ \chi_g(n,k):=\chi(n,g(n),k),\quad  \chi^M_g(n,k):=\max\{ \chi_g(i,k) \mid i\le n\},\]
$P:=\gamma\left( \alpha_G\left(4\beta_H(2k+1)+3\right)\right)+1$ 
and
\[ \left\{ \ba{l} \widehat{\Psi}_0(0,k,g,\Phi,\chi,\beta_H,\xi):=0 \\ 
\widehat{\Psi}_0(n+1,k,g,\Phi,\chi,\beta_H,\xi):= \\ 
\hspace*{1cm} \widehat{\Phi}\left(\chi^M_g\left(\Psi_0(n,k,g,\Phi,\chi,\beta_H,\xi),4\beta_H(2k+1)+3\right),\xi(4\beta_H(2k+1)+3)\right). \ea \right. \]  
\end{theorem}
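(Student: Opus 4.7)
The plan is to follow the structure of the proof of Theorem \ref{main-step-general-GH-fejer} almost verbatim, with three adjustments to absorb the new features. First, the role of the approximate $F$-point bound is taken over by the liminf-bound $\widehat{\Phi}$, which receives a second argument specifying a lower threshold for the returned index. Second, the summable errors $(\varepsilon_n)$ must be controlled via the Cauchy rate $\xi$ of $\sum\varepsilon_i$. Third, because at the decisive step the quasi-Fej\'er inequality contributes three error-type summands instead of two, the internal constant $2\beta_H(2k+1)+1$ is strengthened to $4\beta_H(2k+1)+3$, so that each summand can be bounded by $\frac{1}{4\beta_H(2k+1)+4}$.

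Concretely, writing $r_0 := 4\beta_H(2k+1)+3$, I would first set $n_i := \widehat{\Psi}_0(i,k,g,\widehat{\varphi}_F,\chi,\beta_H,\xi)$ using the actual liminf function $\widehat{\varphi}_F$. The recursion is designed so that for $i \geq 1$, $x_{n_i}$ is a $\chi^M_g(n_{i-1}, r_0)$-approximate $F$-point and, simultaneously, $n_i \geq \xi(r_0)$, the latter by construction since $\widehat{\Phi}$ is supplied $\xi(r_0)$ as its second argument. Monotonicity of $\widehat{\Phi}$ yields that $(n_i)$ is nondecreasing and is dominated by $\widehat{\Psi}_0(i,k,g,\widehat{\Phi},\chi,\beta_H,\xi)$, in parallel with the original proof. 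The first claim of the previous theorem transfers verbatim: for $j \geq 1$ and $0 \leq i < j$, the point $x_{n_j}$ is a $\chi_g(n_i, r_0)$-approximate $F$-point.

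For the main pigeonhole step, I would apply the II-modulus $\gamma$ to the sequence $x_{n_1}, x_{n_2}, \ldots, x_{n_P}$ of length $P = \gamma(\alpha_G(r_0))+1$. The $+1$ is crucial: after reindexing, it guarantees indices $1 \leq I < J \leq P$, ensuring that $n_I$ was produced by $\widehat{\Phi}$ (and not the base case $n_0 = 0$) and therefore $n_I \geq \xi(r_0)$. The $G$-modulus then gives $G(d(x_{n_I}, x_{n_J})) \leq \frac{1}{4\beta_H(2k+1)+4}$. Applying uniform quasi-$(G,H)$-Fej\'er monotonicity with $r := r_0$, $n := n_I$, $m := g(n_I)$, $p := x_{n_J}$ (which is legitimate by the first claim) yields for each $l \leq g(n_I)$,
\[
H(d(x_{n_I+l}, x_{n_J})) < G(d(x_{n_I}, x_{n_J})) + \sum_{i=n_I}^{n_I+g(n_I)-1}\varepsilon_i + \frac{1}{4\beta_H(2k+1)+4} \le \frac{3}{4\beta_H(2k+1)+4} \le \frac{1}{\beta_H(2k+1)+1},
\]
the three summands being controlled respectively by the pigeonhole choice of $P$, by $n_I \geq \xi(r_0)$, and by the choice of $r$. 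The defining property of the $H$-modulus then gives $d(x_{n_I+l}, x_{n_J}) \leq \frac{1}{2k+2}$, and the triangle inequality yields $d(x_i, x_j) \leq \frac{1}{k+1}$ on $[N, N+g(N)]$ with $N := n_I$.

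The main obstacle I anticipate is the careful bookkeeping guaranteeing $I \geq 1$ in the pigeonhole step; without this, the tail-of-errors bound $\sum_{i=n_I}^{\infty}\varepsilon_i < \frac{1}{4\beta_H(2k+1)+4}$ would be unavailable, since $n_0 = 0$ has no such guarantee. All other steps are algebraically close to the Fej\'er case; the quasi-Fej\'er modification simply forces slightly wider safety margins, built into the constant $r_0$ and into $P$, and an extra tail-control threshold threaded through $\widehat{\Phi}$.
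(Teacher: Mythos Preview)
Your proposal is correct and follows essentially the same approach as the paper's proof: both mirror Theorem \ref{main-step-general-GH-fejer}, tighten the internal accuracy constant to $r_0=4\beta_H(2k+1)+3$, thread the threshold $\xi(r_0)$ through the second argument of $\widehat{\Phi}$, and use the $+1$ in $P$ precisely to force $I\ge 1$ so that $n_I\ge\xi(r_0)$ controls the tail $\sum_{i\ge n_I}\varepsilon_i$. Your explicit three-term estimate $\frac{3}{4\beta_H(2k+1)+4}\le\frac{1}{\beta_H(2k+1)+1}$ is just a slightly different (and arguably cleaner) bookkeeping of the same bound the paper obtains.
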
 
\begin{proof} The proof is the same as the one of Theorem 
\ref{main-step-general-GH-fejer} up to 
(\ref{q-thm-1-useful-1}) which now holds with $1/(4\beta_H(2k+1)+4)$ 
instead of $1/(2\beta_H(2k+1)+2).$ We then use uniform quasi-$(G,H)$-Fej\'er 
monotonicity as we did before without `quasi' to now get that for all 
$l\leq g(n_I)$ 
\[ H(d(x_{n_I +l},x_{n_J}))\le G(d(x_{n_I},x_{n_J}))+\sum^{n_I+l-1}_{i=n_I} 
\varepsilon_i +\frac{1}{4\beta_H(2k+1)+4}. \] 
By construction of $n_I$ we know that $n_I\ge \xi(4\beta_H(2k+1)+3)$ (not that 
by the addition of `$+1$' to the original definition of $P$ that was used 
in the proof of Theorem \ref{main-step-general-GH-fejer} 
$I,J$ can now be choosen so that $0<I<J\le P$ rather than only 
$0\le I<J\le P$) and so 
\[ \sum^{n_I+l-1}_{i=n_I} 
\varepsilon_i \le \frac{1}{4\beta_H(2k+1)+4}\] and so we get in 
total 
\[ H(d(x_{n_I +l},x_{n_J}))\le G(d(x_{n_I},x_{n_J})) +\frac{1}{2\beta_H(2k+1)+2} \]  from where we can finish the proof as before.
\end{proof}   
As it is clear from the proof above, one actually does not need a Cauchy modulus $\xi$ of the error-sum but only a rate of metastability.\\[1mm] 
With the new bound $\widehat{\Psi}$ from 
Theorem \ref{main-step-general-quasi-GH-fejer} instead of $\Psi$ all the other 
results of the previous section extend in the obvious way to the `quasi'-case.
As a consequence of this, we could incorporate also in the iterations 
considered in the rest of this paper error terms which we, however, will 
not carry out.

\section{Application - $F$ is $Fix(T)$}\label{section-F-FT}

Let $X$ be a metric space, $C\se X$ a nonempty subset and $T:C\to C$ be a mapping. We assume that $T$ has  fixed points and define $F$ as  
the nonempty fixed point set $Fix(T)$ of  $T$. 

One has
\[
F=  \bigcap_{k\in\N}\tilde{F}_k, \quad \text{where } \tilde{F}_k=\left\{ x\in C \mid d(x,Tx)\le \frac1{k+1}\right\}.
\]
In this case, for all $k\in\NN$  we have that that $AF_k=\tilde{F}_k$ and the $k$-approximate $F$-points are  precisely the 
$1/(k+1)$-approximate fixed points of $T$.

Let us recall that the mapping $T$ is uniformly continuous with modulus $\omega_T:\N\to\N$ if for all $k\in\N$ and all $p,q\in C$,
\[
d(p,q)\leq \frac1{\omega_T(k)+1}  \,\, \rightarrow  \,\, d(Tp,Tq)\leq  \frac1{k+1}.  
\]

One can see easily that the following properties hold.

\begin{lemma}\label{FixT-xn-basic}
Let $(x_n)$ be a sequence in $C$.
\be
\item $(x_n)$ has approximate $F$-points if and only if for all $k\in\NN$ there exists  $N\in \N$ such that  $\ds d(x_N,Tx_N)\le  \frac1{k+1}$. 
If this is the case, we say also that $(x_n)$ has 
approximate fixed points.
\item $(x_n)$ has the liminf property w.r.t. $F$ if and only if $\linfn d(x_n,Tx_n)=0$. 
\item $(x_n)$ is asymptotically regular w.r.t. $F$ if and only if $\limn d(x_n,Tx_n)=0$. 
\item If $T$ is continuous, then $F$ is explicitly closed.
\item\label{Tuc-Fuc} If $T$ is uniformly continuous with modulus $\omega_T$, then $F$ is uniformly closed with moduli $\ds \omega_F(k)=\max\{4k+3,\omega_T(4k+3)\}$ and 
$\delta_F(k)=2k+1.$ 
\ee
\end{lemma}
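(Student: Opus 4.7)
The plan is to dispatch (i)--(iii) as mechanical unpacking of definitions and then focus the real work on the last two parts. The identification $AF_k = \tilde F_k = \{x\in C : d(x,Tx)\le 1/(k+1)\}$, noted just before the lemma, makes the membership predicate ``$x_N \in AF_k$'' literally equivalent to $d(x_N, Tx_N)\le 1/(k+1)$. With this substitution, each of the three statements reduces to the corresponding translation (``approximate $F$-points exist'', the liminf property, asymptotic regularity) from Section \ref{section-approximate}, so no further work is required beyond reading off the equivalences.

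For (iv), I would invoke the remark between Definition \ref{explicit-closed} and its re-formulation, which says that $F$ is explicitly closed whenever every $\tilde F_k$ is closed. Since $T$ is continuous, the map $x\mapsto d(x,Tx)$ is continuous on $C$, and $\tilde F_k$ is its sublevel set at $1/(k+1)$, hence closed, which finishes this case.

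Part (v) is the genuinely quantitative step and the main obstacle. Unfolding Definition \ref{uniformly-closed} with $\delta_F(k)=2k+1$, the task is: given $q\in C$ with $d(q,Tq)\le 1/(2k+2)$ and $p\in X$ with $d(p,q)\le 1/(\omega_F(k)+1)$, conclude $d(p,Tp)\le 1/(k+1)$. My strategy is the three-term triangle inequality
\[ d(p,Tp) \le d(p,q) + d(q,Tq) + d(Tq,Tp), \]
where I budget the target $1/(k+1)$ as $1/(2k+2) + 1/(2k+2)$. The middle summand absorbs the first half directly from the hypothesis $q\in AF_{2k+1}$. For the second half I split further as $1/(4k+4)+1/(4k+4)$: the first piece bounds $d(p,q)$ provided $\omega_F(k)\ge 4k+3$, and the second piece bounds $d(Tp,Tq)$ via the uniform continuity of $T$ provided $\omega_F(k)\ge \omega_T(4k+3)$. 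Taking the maximum of these two constraints forces the stated $\omega_F(k)=\max\{4k+3,\omega_T(4k+3)\}$.

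The only delicate point is the arithmetic bookkeeping of halving the budget $1/(k+1)$ twice, which is what produces both occurrences of the factor $4$ in $\omega_F$; once the right ``quarters'' are identified, the triangle estimate assembles to exactly $1/(2k+2)+1/(2k+2)=1/(k+1)$ and nothing further is needed.
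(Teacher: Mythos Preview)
Your proposal is correct and essentially mirrors the paper's own argument: parts (i)--(iii) are indeed just unpacking the definitions via $AF_k=\tilde F_k$, and your proof of (v) is exactly the triangle-inequality budget $d(p,Tp)\le d(p,q)+d(q,Tq)+d(Tq,Tp)\le \tfrac{1}{4k+4}+\tfrac{1}{2k+2}+\tfrac{1}{4k+4}$ that the paper uses. The only minor difference is in (iv): the paper verifies the reformulated explicit-closedness condition directly from continuity of $x\mapsto d(x,Tx)$ at a given point, whereas you route through the remark that closedness of every $\tilde F_k$ suffices---but this is the same idea packaged slightly differently.
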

\details{\begin{proof}
\be
\item is obvious.
\item Since $(d(x_n,Tx_n))_{n\in\N}$ is a sequence of nonnegative reals, we have that $\lim \inf d(x_n,Tx_n)=0$ iff 
$(x_n)$ has a subsequence which converges to $0$ iff $(x_n)$ has the liminf property w.r.t. $F$.
\item $(x_n)$ is asymptotically regular w.r.t. $F$ iff $\forall k\in \NN\,\exists N\in\NN\,\forall m\ge N \,\,\, \left(d(x_m,Tx_m)\leq  \frac1{k+1}\right)$ iff  $\limn d(x_n,Tx_n)=0$.
\item Assume that $T$ is continuous. Then the mapping $d_T:C\to \RR_+, \,\, d_T(x)=d(x,Tx)$ is continuous too. Let $k\ge 1$ and $p\in C$. 
Since $d_T$ is continuous at $p$, there exists $N\in\NN$ such that for all $q\in C$ such that  $d(p,q)\le  \frac1{N+1}$ we have that 
$|d_T(q)-d_T(p)|\leq  \frac1{2(k+1)}$. Since  
\[d(p,Tp)=d_T(p)\leq d_T(q)+\frac1{2(k+1)} =d(q,Tq)+\frac1{2(k+1)} \]
it follows that 
\[ d(q,Tq)\le \frac1{2(k+1)}\to d(p,Tp)\le \frac1{k+1} \]
Take $M:=\frac1{2(k+1)}$. 
\item Assume that $T$ is uniformly continuous with modulus $\omega_T$. Let $k\ge 1$ and $p,q\in C$ be such that
$d(q,Tq)\le   \frac1{\delta_F(k)+1}=\frac1{2(k+1)}$  and $d(p,q)\le \frac1{\omega_F(k)+1}\le  \frac1{\omega_T(4k+3)+1},  \frac1{4k+4}$, 
by the definition of $\omega_F$.  We get that  $d(Tp,Tq)\leq \frac1{4k+4}$. Hence,
\bua
d(p,Tp) & \leq & d(p,q)+d(q,Tq)+d(Tq,Tp)\leq \frac1{4k+4}+\frac1{2k+2}+\frac1{4k+4}\\
&=& \frac1{k+1}.
\eua
\ee
\end{proof}
}

\begin{remark} Uniform closedness can be viewed as a quantitative version of the special extensionality statement $(*)\ q\in F\wedge p=_X q\to p\in F.$
Extensionality w.r.t. $p=_X q :=\| p-q\|_X =0$ is not included as an
axiom in our formal framework (for reasons explained in \cite{Koh05a})
and has to be derived (if needed) from appropriate uniform continuity
assumptions (see item $(v)$ in the lemma above). In the case of $(*)$,
however, it suffices to have the moduli $\omega_F,\delta_T$ which (as we will see in Section \ref{section-E}) are also available for interesting classes of in general discontinuous mappings $T$ (where, in particular, 
the model theoretic approach 
to metastability from \cite{AvigadIovino} is not applicable as is stands).
\end{remark}

As a consequence of Proposition \ref{general-GH-fejer} and Remark \ref{general-fejer-boundedly-compact}, we get 

\begin{proposition}\label{general-fejer-FT} 
Let $C$ be a boundedly compact subset of a metric space $X$ and 
$T:C\to C$ be continuous with $F=Fix(T)\ne\emptyset$. Assume that   $(x_n)$  is bounded and $(G,H)$-Fej\'er monotone with respect to $F$ and that  $(x_n)$ has approximate fixed points. Then $(x_n)$  converges to a fixed point of $T$. \\ 
The continuity of $T$ can be replaced by the weaker assumption that 
$F$ is explicitly closed (see Section \ref{section-E} for a class of in general 
discontinuous functions for which $Fix(T)$ is uniformly closed).
\end{proposition}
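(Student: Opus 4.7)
The plan is to deduce the statement directly from Proposition \ref{general-GH-fejer} together with Remark \ref{general-fejer-boundedly-compact}, after checking the hypotheses in the concrete setting where $F = Fix(T)$ is represented by $\tilde F_k = \{x\in C \mid d(x,Tx)\le 1/(k+1)\}$, so that the $k$-approximate $F$-points are exactly the $1/(k+1)$-approximate fixed points of $T$ (Lemma \ref{FixT-xn-basic}).

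First, I would verify explicit closedness of $F$: since $T$ is continuous, Lemma \ref{FixT-xn-basic}(iv) immediately gives that $F$ is explicitly closed with respect to the representation above. Second, the hypothesis that $(x_n)$ has approximate fixed points is, by Lemma \ref{FixT-xn-basic}(i), exactly the statement that $(x_n)$ has approximate $F$-points. Combined with the standing assumption that $(x_n)$ is $(G,H)$-Fej\'er monotone with respect to $F$, this places us essentially in the setting of Proposition \ref{general-GH-fejer}, except that $C$ is only assumed boundedly compact rather than compact.

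To bridge this gap, I would invoke Remark \ref{general-fejer-boundedly-compact}: since $(x_n)$ is bounded in the boundedly compact set $C$, there exist $a\in C$ and $r>0$ with $\{x_n\mid n\in\N\}\se \ol{B}(a,r)\cap C$, and the latter set is compact. Inside this compact subset, the proof of Proposition \ref{general-GH-fejer} applies verbatim: Lemma \ref{xn-cluster-point-F} produces an adherent point $\hat x\in F$ of $\{x_n\}$, and Lemma \ref{GH-Fejer-basic}(\ref{GH-Fejer-basic-cluster}) then yields $x_n\to \hat x\in Fix(T)$. For the final assertion, note that continuity of $T$ was used solely to invoke Lemma \ref{FixT-xn-basic}(iv); assuming explicit closedness of $F$ directly dispenses with that single step and leaves the rest of the argument unchanged.

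There is no substantial obstacle; the only point to check is that the subsequential-compactness argument underlying Lemma \ref{xn-cluster-point-F} still applies after localizing from the ambient space to the compact ball $\ol{B}(a,r)\cap C$. Since both explicit closedness of $F$ and $(G,H)$-Fej\'er monotonicity are hereditary properties, this localization is routine, and the entire proof is essentially a reading-off of the hypotheses.
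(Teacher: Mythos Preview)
Your proposal is correct and follows exactly the approach the paper takes: the paper simply states that the proposition is a consequence of Proposition \ref{general-GH-fejer} and Remark \ref{general-fejer-boundedly-compact}, and your argument spells out precisely how those pieces (together with Lemma \ref{FixT-xn-basic}(i),(iv)) fit together. Your remark about hereditary properties is slightly superfluous---one only needs compactness of $\ol{B}(a,r)\cap C$ to extract the convergent subsequence in Lemma \ref{xn-cluster-point-F}, while explicit closedness and $(G,H)$-Fej\'er monotonicity are used in the ambient space---but this does not affect the correctness.
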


If we weaken `boundedly compact' to `totally bounded' or drop the assumption that $T$ is continuous, one cannot even prove that $(x_n)$ is Cauchy, as the following examples show.

\begin{example}\label{general-fejer-FT-counter}
Let $C:=(0,1]\cup \{ 2\}$ with the metric $d(x,y):=\min \{ |x-y|,1\}.$ Then $C$ is totally bounded and the mapping
\[T:C\to C, \ T(x):=x/2, \text{~if }x\in (0,1],\ T(2):=2\]
 is continuous with  $F:=Fix(T)=\{ 2\}.$ Now let $x_n:=T^n(1)$, for even $n$, and 
$x_n:=1$ for odd $n$.  Then $(x_n)$ has approximate fixed points and 
is Fej\'er monotone w.r.t. $F$ but clearly not Cauchy. 

If we drop the explicit closedness of $F$, we can slightly modify the above example to get a counterexample to the Cauchyness of $(x_n)$ even for compact $C$: just  take 
$C:=[0,1]\cup\{ 2\}$ and define $T(0):=2$. 
\end{example}  

\subsection{Picard iteration for  (firmly) nonexpansive mappings}\label{section-Picard-ne}

Assume that $T$ is nonexpansive. Then, obviously, $T$ is uniformly continuous with  modulus $\omega_T=id_\N$. We consider in the sequel  
the Picard iteration starting from $x\in C$: 
\[x_n:=T^nx.\]

One can see by induction that for all $n,m\in\N$ and $p\in C$, 
\[
d(x_{n+m},p)\le d(x_n,p) + md(p,Tp). 
\]
\details{By induction on $m$. The case $m=0$ is obvious.  $m\Ra m+1$:
\bua
d(x_{n+m+1},p) & = & d(Tx_{n+m},p)\leq d(Tx_{n+m},Tp)+d(Tp,p)\leq d(x_{n+m},p)+d(Tp,p)\\
& \leq & d(x_n,p) + (m+1)d(p,Tp) 
\quad \text{by the induction hypothesis}
\eua
}
As an immediate consequence, we get that $(x_n)$ is Fej\'er monotone, hence, in particular, bounded. In fact, one can easily prove more:

\begin{lemma}\label{lemma-Picard-ne-unif-Fejer}
$(x_n)$ is uniformly Fej\' er monotone w.r.t. $F$ with modulus 
\[\chi(n,m,r)=m(r+1).\]
\end{lemma}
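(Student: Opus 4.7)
The plan is to feed the already-established inequality
\[
d(x_{n+m},p) \le d(x_n,p) + m\, d(p,Tp) \qquad (n,m\in\N,\ p\in C)
\]
into the definition of uniform Fej\'er monotonicity. The candidate modulus $\chi(n,m,r) := m(r+1)$ is chosen precisely so that the error term $m\,d(p,Tp)$ coming from approximate--fixed--pointness of $p$ can be absorbed into $1/(r+1)$.

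Concretely, given $r,n,m\in\N$ and $p\in X$ with $p\in AF_{m(r+1)}$ (so $d(p,Tp)\le 1/(m(r+1)+1)$), I would apply the displayed inequality with each $l\le m$ in place of $m$, obtaining
\[
d(x_{n+l},p) \le d(x_n,p) + l\, d(p,Tp) \le d(x_n,p) + \frac{m}{m(r+1)+1}.
\]
Since $m(r+1) < m(r+1)+1$, the estimate $\tfrac{m}{m(r+1)+1} < \tfrac{1}{r+1}$ holds, which yields the required strict inequality $d(x_{n+l},p) < d(x_n,p) + 1/(r+1)$ for all $l\le m$. This is exactly what uniform Fej\'er monotonicity with modulus $\chi(n,m,r) = m(r+1)$ demands (recall $G=H=\mathrm{id}_{\R_+}$ here).

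The case $m=0$ needs a brief check: then $\chi(n,0,r)=0$ and the condition reduces to $d(x_n,p) < d(x_n,p) + 1/(r+1)$ for $l=0$, which is trivial. There is no real obstacle in this proof: the argument is a one-line algebraic manipulation once the preliminary inequality $d(x_{n+m},p)\le d(x_n,p)+m\,d(p,Tp)$ (itself a trivial induction using nonexpansivity of $T$) is in hand. The only thing to be slightly careful about is the strict versus non-strict inequality in the definition of uniform Fej\'er monotonicity, which is guaranteed here by the $+1$ in the denominator of $1/(m(r+1)+1)$.
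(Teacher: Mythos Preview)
Your proof is correct and follows essentially the same approach as the paper: both use the preliminary inequality $d(x_{n+m},p)\le d(x_n,p)+m\,d(p,Tp)$, bound $d(p,Tp)$ by $1/(m(r+1)+1)$ from the hypothesis $p\in AF_{m(r+1)}$, and observe that $m/(m(r+1)+1)<1/(r+1)$. You are in fact slightly more careful than the paper's sketch in explicitly handling the $m=0$ case and in tracking the strict inequality.
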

\details{\begin{proof}
Let $r,n,m\in\NN$ and $p\in C$ be such that $d(p,Tp)\leq \frac1{\chi(n,m,r)+1}=\frac1{m(r+1)+1}<\frac1{m(r+1)}$. It follows that for all $l\le m$,
\bua
d(x_{n+l},p) &\le & d(x_n,p) + ld(p,Tp)\le d(x_n,p) + md(p,Tp)\\
&\le &  d(x_n,p) +m\frac1{m(r+1)}\le d(x_n,p) +\frac1{r+1}. 
\eua
\end{proof}
}

Applying Proposition \ref{general-fejer-FT}, we get  

\begin{corollary}\label{general-fejer-FT-Picard-ne}
Let $C$ be a boundedly compact subset of a metric space $X$ and $T:C\to C$ be nonexpansive with $Fix(T)\ne\emptyset$. 
Assume that $(x_n)$  has approximate fixed points. Then $(x_n)$  converges to a fixed point of $T$. 
\end{corollary}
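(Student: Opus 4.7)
The plan is to apply Proposition \ref{general-fejer-FT} directly, verifying each of its hypotheses for the Picard iteration. First I would observe that every nonexpansive map is (uniformly) continuous, so the continuity requirement on $T$ is immediate; alternatively, I could invoke Lemma \ref{FixT-xn-basic}.\eqref{Tuc-Fuc} with $\omega_T = \mathrm{id}_\N$, which even gives uniform closedness of $F = \mathrm{Fix}(T)$. The bounded compactness of $C$ and the existence of approximate fixed points for $(x_n)$ are both assumed in the corollary.

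Next I would verify the remaining two conditions: that $(x_n)$ is $(G,H)$-Fej\'er monotone with respect to $F$ (with $G = H = \mathrm{id}_{\R_+}$) and that $(x_n)$ is bounded. For any $p \in \mathrm{Fix}(T)$, nonexpansiveness yields $d(x_{n+1},p) = d(Tx_n,Tp) \le d(x_n,p)$, so $(x_n)$ is Fej\'er monotone in the classical sense (this is also an immediate qualitative consequence of Lemma \ref{lemma-Picard-ne-unif-Fejer}). Fej\'er monotonicity with respect to any fixed $p \in F \ne \emptyset$ then gives $d(x_n,p) \le d(x_0,p)$ for all $n$, so $(x_n)$ is bounded.

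With all hypotheses of Proposition \ref{general-fejer-FT} verified, its conclusion gives at once that $(x_n)$ converges to a point of $F = \mathrm{Fix}(T)$. There is no real obstacle here: the corollary is essentially a specialization of the general Proposition \ref{general-fejer-FT} to the Picard iteration, the only substantive verifications being continuity (trivial from nonexpansiveness) and Fej\'er monotonicity (one line from the definition of nonexpansiveness), the latter also yielding boundedness for free.
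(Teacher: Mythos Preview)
Your proposal is correct and follows essentially the same approach as the paper: the corollary is obtained directly by applying Proposition~\ref{general-fejer-FT}, after noting that nonexpansiveness gives continuity of $T$ and that the Picard iterates are Fej\'er monotone w.r.t.\ $F=\mathrm{Fix}(T)$ (hence bounded). The paper records the Fej\'er monotonicity and boundedness immediately before Lemma~\ref{lemma-Picard-ne-unif-Fejer} and then simply invokes Proposition~\ref{general-fejer-FT}, just as you do.
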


As $(x_n)$ is uniform Fej\'er monotone  w.r.t. $F$ and $F$ is uniformly closed, we can apply our quantitative Theorems \ref{main-step-general-GH-fejer}  and \ref{finitization-general-GH-Fejer}  to get the following:

\begin{theorem}\label{Picard-ne-general-GH-fejer}
Assume that $C$ is totally bounded with II-modulus of total boundedness $\gamma$, $T:C\to C$ is nonexpansive with $Fix(T)\ne\emptyset$ and that $(x_n)$ has approximate fixed points, with $\Phi$ being an approximate fixed point bound.
Then for all $k\in\N$ and all $g:\N\to\N$,
\be
\item There exists $N\le \Sigma(k,g,\Phi,\gamma)$ such that 
\[\forall i,j\in [N,N+g(N)]\left( d(x_i,x_j)\le \frac1{k+1}\right), \]
where $\Sigma(k,g,\Phi,\gamma)=\Sigma_0(\gamma(4k+3),k,g,\Phi)$, with $\Sigma_0(0,k,g,\Phi)=0$ and $$\Sigma_0(n+1,k,g,\Phi) =
\Phi\left((4k+4)g^M(\Sigma_0(n,k,g,\Phi))\right).$$
\item There exists $N\le \tilde{\Sigma}(k,g,\Phi,\gamma)$ such that 
\[\forall i,j\in [N,N+g(N)]\
 \left( d(x_i,x_j)\le \frac1{k+1} \text{~and~} d(x_i,Tx_i)\le \frac1{k+1}  \right), \]
where $\tilde{\Sigma}(k,g,\Phi,\gamma)=\tilde{\Sigma}_0(\gamma(8k+7),k,g,\Phi)$, with $\tilde{\Sigma}_0(0,k,g,\Phi)=0$ and 
$$\tilde{\Sigma}_0(n+1,k,g,\Phi)=\Phi\left(\max\left\{2k+1, (8k+8)g^M(\tilde{\Sigma}_0(n,k,g,\Phi))\right\}\right).$$
\ee
\end{theorem}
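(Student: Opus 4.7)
The plan is to specialize Theorem \ref{main-step-general-GH-fejer} and Theorem \ref{finitization-general-GH-Fejer} to the present setting, where $F=Fix(T)$, the representation is $AF_k=\tilde F_k=\{p\in C\mid d(p,Tp)\le 1/(k+1)\}$, and $G=H=\operatorname{id}_{\R_+}$ so that we may take $\alpha_G=\beta_H=\operatorname{id}_\N$. All the needed data are already in place: $\gamma$ is a II-modulus of total boundedness for $C$, $\Phi$ is (w.l.o.g. monotone) an approximate fixed point bound for $(x_n)$, and by Lemma \ref{lemma-Picard-ne-unif-Fejer} the sequence $(x_n)$ is uniformly Fej\'er monotone w.r.t. $F$ with modulus $\chi(n,m,r)=m(r+1)$. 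The first step in both parts is just to substitute these moduli into the general bounds and simplify.

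For part (i) I would invoke Theorem \ref{main-step-general-GH-fejer} directly. With $\alpha_G=\beta_H=\operatorname{id}$ one gets $2\beta_H(2k+1)+1=4k+3$, hence $P=\gamma(\alpha_G(4k+3))=\gamma(4k+3)$. Since $\chi_g(n,r)=g(n)(r+1)$, taking the running maximum in $n$ yields $\chi_g^M(n,r)=(r+1)\,g^M(n)$, so the recursion of $\Psi_0$ becomes
\[
\Psi_0(n+1,k,g,\Phi,\chi,\beta_H)=\Phi\bigl((4k+4)\,g^M(\Psi_0(n,k,g,\Phi,\chi,\beta_H))\bigr),
\]
which is exactly the defining recursion of $\Sigma_0$. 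Thus $\Psi(k,g,\Phi,\chi,\alpha_G,\beta_H,\gamma)=\Sigma(k,g,\Phi,\gamma)$ and the claim follows.

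For part (ii) I would first note that since $T$ is nonexpansive it is uniformly continuous with $\omega_T=\operatorname{id}_\N$, so Lemma \ref{FixT-xn-basic}.\eqref{Tuc-Fuc} gives that $F$ is uniformly closed with $\delta_F(k)=2k+1$ and $\omega_F(k)=\max\{4k+3,\omega_T(4k+3)\}=4k+3$. Then Theorem \ref{finitization-general-GH-Fejer} applies, with
\[
k_0=\max\bigl\{k,\lceil(\omega_F(k)-1)/2\rceil\bigr\}=\max\{k,2k+1\}=2k+1
\]
and $\chi_{k,\delta_F}(n,m,r)=\max\{2k+1,m(r+1)\}$. Running the computation of part (i) with $k_0=2k+1$ in place of $k$ and $\chi_{k,\delta_F}$ in place of $\chi$ gives $2\beta_H(2k_0+1)+1=8k+7$, $P=\gamma(8k+7)$, and
\[
(\chi_{k,\delta_F})_g^M(n,8k+7)=\max\{2k+1,(8k+8)\,g^M(n)\},
\]
so the recursion for $\Psi_0$ specializes to the defining recursion of $\tilde\Sigma_0$. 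The desired conclusion with both $d(x_i,x_j)\le 1/(k+1)$ and $d(x_i,Tx_i)\le 1/(k+1)$ on $[N,N+g(N)]$ is then precisely what Theorem \ref{finitization-general-GH-Fejer} yields.

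There is no real obstacle here beyond careful bookkeeping; the only mildly subtle point is checking that the max appearing inside $\chi_{k,\delta_F}$ survives the passage to $(\chi_{k,\delta_F})_g^M$ in the exact shape $\max\{2k+1,(8k+8)g^M(\cdot)\}$ required by $\tilde\Sigma_0$, which is immediate since taking a running maximum over $i\le n$ commutes with $\max\{2k+1,\cdot\}$.
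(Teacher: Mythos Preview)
Your proposal is correct and follows essentially the same route as the paper: both parts are obtained by specializing Theorem \ref{main-step-general-GH-fejer} resp.\ Theorem \ref{finitization-general-GH-Fejer} with $G=H=\mathrm{id}$, $\alpha_G=\beta_H=\mathrm{id}_\N$, the modulus $\chi(n,m,r)=m(r+1)$ from Lemma \ref{lemma-Picard-ne-unif-Fejer}, and (for (ii)) the uniform closedness moduli $\delta_F(k)=2k+1$, $\omega_F(k)=4k+3$ from Lemma \ref{FixT-xn-basic}.\eqref{Tuc-Fuc}. The only cosmetic difference is that the paper first replaces $g$ by $g^M$ before invoking the general theorems, whereas you apply them with $g$ directly; since $\chi^M_g(n,r)=\max_{i\le n}g(i)(r+1)=(r+1)g^M(n)$ either way, both computations yield the same recursion for $\Sigma_0$ and $\tilde\Sigma_0$.
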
 
\begin{proof}
\be
\item With $\Psi,\Psi_0$ as in Theorem \ref{main-step-general-GH-fejer}, $\alpha_G=\beta_H=id_\N$ and $\chi$ as in Lemma \ref{lemma-Picard-ne-unif-Fejer}, 
define  $\Sigma(k,g,\Phi,\gamma)=\Psi(k,g^M,\Phi,\chi,\alpha_G,\beta_H,\gamma)$  and $\Sigma_0(l,k,g,\Phi) = \Psi_0(l,k_0,g^M,\Phi,\chi,\beta_H)$.

\details{
We have that 
\bua
P&=& \gamma\left( \alpha_G\left(2\beta_H(2k+1)+1\right)\right) =\gamma(4k+3)\\
\chi_{g^M}(n,r)&=& \chi(n,g^M(n),r)=g^M(n)(r+1)\\
\chi^M_{g^M}(n,r)&=& \max\{ \chi_{g^M}(i,r) \mid  i\le n\}=\max\{g^M(i)(r+1) \mid i\le n\}\\
&=& g^M(n)(r+1)=\chi_{g^M}(n,r).
\eua
Hence,
\bua
\Sigma(k,g,\Phi,\gamma)=\Sigma_0(P,k,g,\Phi)=\Sigma_0(\gamma(4k+3),k,g,\Phi),
\eua
$\Sigma_0(0,k,g,\Phi)=0$ and
\bua
\Sigma_0(n+1,k,g,\Phi) &=& \Psi_0(n+1,k,g^M,\Phi,\chi,\beta_H) \\
&=& \Phi\left(\chi^M_{g^M}\left(\Psi_0(n,k,g^M,\Phi,\chi,\beta_H),2\beta_H(2k+1)+1\right)\right)\\
&=& \Phi\left(\chi^M_{g^M}\left(\Sigma_0(n,k,g,\Phi),4k+3\right)\right)\\
&=& \Phi\left((4k+4)g^M(\Sigma_0(n,k,g,\Phi))\right).
\eua
}

\item  Apply Theorem \ref{finitization-general-GH-Fejer} for $g^M$, using that $\omega_F(k)=4k+3$ and $\delta_F(k)=2k+1$, by Lemma \ref{FixT-xn-basic}.\eqref{Tuc-Fuc}.
It follows that  $k_0=2k+1$ and $(\chi_k)^M_{g^M}(n,r)= \max\{2k+1, g^M(n)(r+1)\}$.

\details{Using the fact that  $\delta_F(k)=2k+1$ and $\omega_F(k)=\max\{4k+3,\omega_T(4k+3)\}=4k+3$ , 
we get that
\bua
k_0&=&\max\left\{ k,\left\lceil\frac{\omega_F(k)-1}2\right\rceil \right\}=\max\{k,2k+1\}=2k+1\\
\chi_{k}(n,m,r)&:=& \chi_{k,\delta_F}(n,m,r)=  \max\{\delta_F(k),\chi(n,m,r)\}=\max\{2k+1, m(r+1)\}\\
\eua
It follows that 
\bua
P &=& \gamma\left( \alpha_G\left(2\beta_H(2k_0+1)+1\right)\right)=\gamma(4k_0+3)=\gamma(8k+7)\\
(\chi_k)_{g^M}(n,r)&=&\chi_k(n,g^M(n),r)=\max\{2k+1, g^M(n)(r+1)\}\\
(\chi_k)^M_{g^M}(n,r)&=&\max\{ (\chi_k)_{g^M}(i,r): i\le n\}=\max\{\max\{2k+1,g^M(i)(r+1): i\le n\}\\
&=& \max\{2k+1, g^M(n)(r+1)\}=(\chi_k)_{g^M}(n,r).
\eua
Hence,
\bua
\tilde{\Sigma}(k,g,\Phi,\gamma)=\tilde{\Sigma}_0(P,k,g,\Phi)=\tilde{\Sigma}_0(\gamma(8k+7),k,g,\Phi),
\eua
$\tilde{\Sigma}_0(0,k,g,\Phi)=0$ and
\bua
\tilde{\Sigma}_0(n+1,k,g,\Phi) &=& \Psi_0(n+1,k_0,g^M,\Phi,\chi_k,\beta_H) \\
&=& \Phi\left((\chi_k)^M_{g^M}\left(\Psi_0(n,k_0,g^M,\Phi,\chi_k,\beta_H),2\beta_H(2k_0+1)+1\right)\right)\\
&=& \Phi\left((\chi_k)^M_{g^M}\left(\tilde{\Sigma}_0(n,k,g,\Phi),8k+7\right)\right)\\
&=& \Phi\left(\max\{2k+1, (8k+8)g^M(\tilde{\Sigma}_0(n,k,g,\Phi))\}\right)
\eua
}
 \ee
\end{proof}

If, moreover,  $(x_n)$ is asymptotic regular and  we can compute a rate of asymptotic regularity $\Phi^{++}$,  then we can also apply 
Corollary \ref{cor-self-maj}.

\begin{theorem}\label{Picard-ne-general-GH-fejer-rate-as-reg}
Assume that $C$ is totally bounded with II-modulus of total boundedness $\gamma$, $T:C\to C$ is nonexpansive with $Fix(T)\ne\emptyset$ and that 
$(x_n)$ is asymptotic regular with $\Phi^{++}$ being a rate of asymptotic regularity.
Then for all $k\in\N$ and all $g:\N\to\N$, there exists $N\le \Theta(k,g,\Phi^{++},\gamma)$ such that 
\[ \forall i,j\in [N,N+g(N)]\,\forall m\ge N\,\left( d(x_i,x_j)\le \frac1{k+1} \text{~and~}  d(x_m,Tx_m)\le \frac1{k+1} \right),\]  
where 
\[\Theta(k,g,\Phi^{++},\gamma) =\Theta_0(\gamma^M(4k+3),k,g,\Phi^{++})+K,\]
with $K=(\Phi^{++})^M(k)$, $\Theta_0(0,k,g,\Phi^{++})=0$ and $$
\Theta_0(n+1,k,g,\Phi^{++})=(\Phi^{++})^M\left((g^M(\Theta_0(n,k,g,\Phi^{++})+K)+K)(4k+4) \right).$$
\end{theorem}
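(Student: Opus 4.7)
The plan is to combine Theorem \ref{Picard-ne-general-GH-fejer}.(i), which provides a rate of metastability for the Cauchy property of $(x_n)$, with Theorem \ref{Cauchy+as-reg-metastability}.(ii), which converts a Cauchy rate of metastability together with a rate of asymptotic regularity into a bound satisfying the stronger conclusion (also controlling $d(x_m,Tx_m)$ for all $m\ge N$). The hypotheses of Theorem \ref{Picard-ne-general-GH-fejer}.(i) are available: by Lemma \ref{lemma-Picard-ne-unif-Fejer}, $(x_n)$ is uniformly Fej\'er monotone with modulus $\chi(n,m,r)=m(r+1)$, and since asymptotic regularity implies in particular the existence of approximate fixed points, the function $\Phi:=(\Phi^{++})^M$ serves as a (monotone) approximate fixed point bound.

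The first concrete step is to produce a selfmajorizing Cauchy rate, as required by Theorem \ref{Cauchy+as-reg-metastability}. This is achieved by replacing $\gamma$ by $\gamma^M$ in the formula from Theorem \ref{Picard-ne-general-GH-fejer}.(i); note that $\chi$ is already monotone in each argument, and $\alpha_G=\beta_H=\mathrm{id}_{\N}$, so no further majorization of these data is needed. This yields a selfmajorizing rate
\[\Psi^+(k,g):=\Sigma_0(\gamma^M(4k+3),k,g,(\Phi^{++})^M),\]
with $\Sigma_0$ as in Theorem \ref{Picard-ne-general-GH-fejer}.(i).

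The second step is to invoke Theorem \ref{Cauchy+as-reg-metastability}.(ii) with $\Psi:=\Psi^+$ and the given rate of asymptotic regularity $\Phi^{++}$, obtaining the bound
\[\tilde{\Omega}_{k,g}(\Psi^+,(\Phi^{++})^M)=\Psi^+(k,g_K)+K,\]
where $K=(\Phi^{++})^M(k)$ and $g_K(n)=g^M(n+K)+K$.

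The remaining calculation matches this bound with the claimed $\Theta$. The key observation is that, since $g^M$ is nondecreasing, $(g_K)^M(n)=g^M(n+K)+K$, so substituting $g_K$ into the recursion for $\Sigma_0$ gives exactly
\[\Sigma_0(n+1,k,g_K,(\Phi^{++})^M)=(\Phi^{++})^M\bigl((4k+4)\,(g^M(\Sigma_0(n,k,g_K,(\Phi^{++})^M)+K)+K)\bigr),\]
i.e.\ the recursion defining $\Theta_0(n+1,k,g,\Phi^{++})$ in the statement. Therefore $\Psi^+(k,g_K)+K=\Theta_0(\gamma^M(4k+3),k,g,\Phi^{++})+K=\Theta(k,g,\Phi^{++},\gamma)$, establishing the theorem. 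No serious obstacle is expected; the only delicate point is the bookkeeping in matching the iteration of $\Sigma_0$ under the shifted function $g_K$ with the iteration defining $\Theta_0$, which is routine given the monotonicity of $g^M$.
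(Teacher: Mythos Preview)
Your proof is correct and follows essentially the same route as the paper: the paper invokes Corollary~\ref{cor-self-maj} (which is precisely the packaging of Theorem~\ref{Cauchy+as-reg-metastability}.(ii) with the selfmajorized rate $\Psi^+(k,g):=\Psi(k,g,\Phi,\chi^M,\alpha_G^M,\beta_H^M,\gamma^M)$ and $\Phi=(\Phi^{++})^M$), whereas you unfold that corollary by hand, noting that $\chi^M=\chi$ and $\alpha_G^M=\beta_H^M=\mathrm{id}_\N$ so only $\gamma$ needs to be replaced by $\gamma^M$. The bookkeeping you describe---that $(g_K)^M=g_K$ and hence the $\Sigma_0$-recursion with input $g_K$ becomes exactly the $\Theta_0$-recursion---matches the paper's (suppressed) computation verbatim.
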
 
\begin{proof}
Define  $\Theta(k,g,\Phi^{++},\gamma) = \tilde{\Omega}(k,g,\Psi^+,\Phi^{++})$, with $\tilde{\Omega}$ as in Corollary \ref{cor-self-maj}, $\Phi=(\Phi^{++})^M$ and $\Psi^+(k,g)=\Sigma(k,g,\Phi,\gamma)$,  where $\Sigma$ is defined in Theorem \ref{Picard-ne-general-GH-fejer}.(i).
   
\details{
We have that 
\bua
\Theta(k,g,\Phi^{++},\gamma) &=& \tilde{\Omega}(k,g,\Psi^+,\Phi^{++}) = \tilde{\Omega}_{k,g}(\Psi^+,(\Phi^{++})^M)\\
&=& \Psi^+(k, h)+K.
\eua
where 
\bua 
h(n) &=& g_{K}(n)=g^M(n+K)+K.
\eua
Let us remark that $\alpha_G^M=\alpha_G=\beta_H^M=\beta_H=id_\N$ and $\chi^M=\chi$. Furthermore, 
\bua
\chi_{h}(n,r)&=& \chi(n,h(n),r)=h(n)(r+1)= (g^M(n+K)+K)(r+1) \\
\chi^M_{h}(n,r)&=& \max\{ \chi_{h}(i,l): i\le n\}=l+\max\{(g^M(i+K)+K)(r+1) : i\le n\}\\
&=&(g^M(n+K)+K)(r+1)=\chi_{h}(n,r).
\eua

If we denote
$\Theta_0(n,k,g,\Phi^{++})= \Psi_0(n, k,h,(\Phi^{++})^M,\chi,id_\N)$, we get that 

\bua
\Psi^+(k, h) &=& \Psi(k,h,(\Phi^{++})^M,\chi^M,\alpha^M_G,\beta^M_H,\gamma^M)= \Psi(k,h,(\Phi^{++})^M,\chi,id_\N,id_\N,\gamma^M)\\
&=&
\Psi_0(\gamma^M(k+2),k,h,(\Phi^{++})^M,\chi,id_\N)=\Theta_0(\gamma^M(4k+3),k,g,\Phi^{++}), 
\eua
where 
$\Theta_0(0,k,g,\Phi^{++})=0$ and
\bua
\Theta_0(n+1,k,g,\Phi^{++}) &=& \Psi_0(n+1,k,h,(\Phi^{++})^M,\chi,id_\N)\\
&=& (\Phi^{++})^M\left(\chi^M_{h}\left(\Psi_0(n,k,h,(\Phi^{++})^M,\chi,id_\N),4k+3\right)\right)\\
&=& (\Phi^{++})^M\left(\chi^M_{h}\left(\Theta_0(n,k,g,\Phi^{++}),4k+3\right)\right)\\
&=& (\Phi^{++})^M\left((g^M(\Theta_0(n,k,g,\Phi^{++})+K)+K)(4k+4)\right).
\eua
}
\end{proof}

We recall that a $W$-hyperbolic space \cite{Koh05a} is a metric space $X$ endowed with a convexity mapping $W:X\times X\times [0,1]\to X$ satisfying
\begin{eqnarray*}
(W1) & d(z,W(x,y,\lambda))\le (1-\lambda)d(z,x)+\lambda d(z,y),\\
(W2) & d(W(x,y,\lambda),W(x,y,\tilde{\lambda}))=|\lambda-\tilde{\lambda}|\cdot 
d(x,y),\\
(W3) & W(x,y,\lambda)=W(y,x,1-\lambda),\\
(W4) & \,\,\,d(W(x,z,\lambda),W(y,w,\lambda)) \le (1-\lambda)d(x,y)+\lambda
d(z,w).
\end {eqnarray*}
for all $x,y,z\in X$ and all $\lambda,\tilde{\lambda}\in [0,1]$.  We use in the sequel the notation $(1-\lambda)x+\lambda y$ for $W(x,y,\lambda)$.  

Following \cite{GoeRei84}, one can define in the setting of $W$-hyperbolic spaces a notion of uniform convexity.  A $W$-hyperbolic space $X$ is {\em uniformly convex with modulus} 
$\eta:(0,\infty)\times(0,2]\rightarrow (0,1]$ if for any $r<0,\eps\in (0,2]$ and all $a,x,y\in X$,  
\[ d(x,a)\le r, \, d(y,a)\le r, \text{ and } d(x,y)\ge\varepsilon r \quad \text{imply} \quad d\left(\frac12x+\frac12y,a\right)\le (1-\eta(r,\eps))r.\]
A modulus $\eta$ is said to be {\em monotone} if it is nonincreasing in the first argument. 
Uniformly convex $W$-hyperbolic spaces with a monotone modulus $\eta$ are called $UCW$-hyperbolic spaces in \cite{Leu10}.  One can easily see that CAT(0) spaces \cite{BriHae99} are $UCW$-hyperbolic spaces with modulus $\eps^2/8$. We refer to  \cite{Leu10,Leu07,KohLeu10} for  properties of $UCW$-hyperbolic spaces.

\mbox{}

A very important class of nonexpansive mappings are the firmly nonexpansive ones. They are central  in convex optimization  because of  the correspondence with maximal monotone operators due to Minty \cite{Min62}. We refer to \cite{BauMofWan12} for a systematic analysis of this correspondence. Firmly nonexpansive mappings were introduced by Browder \cite{Bro67} in Hilbert spaces and by Bruck \cite{Bru73} in Banach spaces, but they are also studied in the Hilbert ball \cite{GoeRei84,KopRei09} or in different classes of geodesic spaces \cite{ReiSha87,ReiSha90,AriLeuLop14,Nic13}.

Let  $C\se X$ be a nonempty subset of a $W$-hyperbolic space $X$.  A mapping 
$T:C\to C$ is  {\em $\lambda$-firmly nonexpansive} (where $\lambda\in (0,1)$) if for all $x,y\in C$,
\[
d(Tx,Ty)\leq d((1-\lambda)x+\lambda Tx,(1-\lambda)y+\lambda Ty)\leq d(x,y).
\]

Using proof mining methods, effective uniform rates of asymptotic regularity for the Picard iteration were obtained for $UCW$-hyperbolic spaces in \cite{AriLeuLop14} and for $W$-hyperbolic spaces in \cite{Nic13}.   For  a CAT(0) space $X$, $C\se X$ a bounded subset, one gets, as an immediate consequence of \cite[Theorem 7.1]{AriLeuLop14}\footnote{Correction to \cite{AriLeuLop14}: In Corollary 7.4 should be `$(b+1)^2$' instead of `$(b+1)$' in the definition of $\Phi(\varepsilon,\lambda,b)$.} the following rate of asymptotic regularity for the Picard iteration of a $\lambda$-firmly nonexpansive mapping $T:C\to C$:  
\beq
\Phi^{++}(k,b,\lambda):=\left\lceil\frac{8(b+1)^2}{\lambda\,(1-\lambda)}\right\rceil (k+1)^2, \label{rate-as-reg-Picard-fne}
\eeq
where  $b>0$ is an upper bound on the diameter of $C$.

We can, thus, apply Theorem \ref{Picard-ne-general-GH-fejer-rate-as-reg} and remark that $(\Phi^{++})^M=\Phi^{++}$ to obtain

\begin{corollary}\label{Picard-fne-GH-fejer}
Assume that $X$ is a CAT(0) space, $C\se X$ a totally bounded subset and $T:C\to C$ is a 
$\lambda$-firmly nonexpansive mapping with $Fix(T)\ne\emptyset$. 
Let $\gamma$ be a II-modulus of total boundedness of $C$ and $b>0$ be an upper bound on 
the diameter of $C$. Then for all $k\in\N$ and all $g:\N\to\N$,
there exists $N\le \Theta(k,g,\gamma,b,\lambda)$ such that 
\[ \forall i,j\in [N,N+g(N)]\,\forall m\ge N\,\left( d(x_i,x_j)\le \frac1{k+1} \text{~and~}  d(x_m,Tx_m) 
\le \frac1{k+1} \right),\]
where $\Theta\left(k,g,\gamma,b,\lambda\right):=\Theta_0(\gamma^M(4k+3),k,g,b,\lambda)$, 
with 
\bua 
c=\left\lceil\frac{8(b+1)^2}{\lambda\,(1-\lambda)}\right\rceil, \quad  K=c\cdot (k+1)^2, \,
 \Theta_0(0,k,g,b,\lambda)=0 \text{~and} \\
 \Theta_0(n+1,k,g,b,\lambda) = c\left((g^M(\Theta_0(n,k,g,b,\lambda)+K)+K)(4k+4)\right)^2.
 \eua
\end{corollary}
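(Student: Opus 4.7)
My plan is to derive this corollary as a direct specialization of Theorem \ref{Picard-ne-general-GH-fejer-rate-as-reg}. First I would verify the hypotheses of that theorem: every $\lambda$-firmly nonexpansive map satisfies $d(Tx,Ty)\le d(x,y)$ as the outermost bound of its defining chain of inequalities, so $T$ is in particular nonexpansive. The remaining hypotheses — total boundedness of $C$ with II-modulus $\gamma$ and non-emptiness of $Fix(T)$ — are part of the assumption. Hence it only remains to exhibit a rate of asymptotic regularity for the Picard iteration $x_n=T^nx$ and then plug everything into the formula.

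Next I would supply the rate of asymptotic regularity. A CAT(0) space is a $UCW$-hyperbolic space with monotone modulus $\eta(r,\varepsilon)=\varepsilon^2/8$, so the quantitative result of \cite{AriLeuLop14} (Theorem 7.1, with the correction footnoted in the text) applies and yields exactly
\[
\Phi^{++}(k):=\left\lceil\frac{8(b+1)^2}{\lambda(1-\lambda)}\right\rceil(k+1)^2=c\cdot(k+1)^2
\]
as recorded in \eqref{rate-as-reg-Picard-fne}, where $b$ is an upper bound on the diameter of $C$ (which exists because total boundedness implies boundedness, as already noted in Section \ref{logic-section}). This function is monotone nondecreasing in $k$, so $(\Phi^{++})^M=\Phi^{++}$ and the recursive expression from Theorem \ref{Picard-ne-general-GH-fejer-rate-as-reg} simplifies correspondingly.

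The last step is purely bookkeeping: substitute $\Phi^{++}$ into the recursion defining $\Theta_0$ in Theorem \ref{Picard-ne-general-GH-fejer-rate-as-reg}. With $K=(\Phi^{++})^M(k)=c(k+1)^2$ the base case $\Theta_0(0,k,g,b,\lambda)=0$ is immediate, and the recursive clause
\[
\Theta_0(n+1,k,g,\Phi^{++})=(\Phi^{++})^M\bigl((g^M(\Theta_0(n,\cdot)+K)+K)(4k+4)\bigr)
\]
unfolds via $\Phi^{++}(m)=c(m+1)^2$ to the expression stated in the corollary (up to the routine squaring). I do not anticipate any real obstacle here: no new quantitative input is needed beyond the rate from \cite{AriLeuLop14}, and the main point requiring some care is simply the correct identification of the constants $c,K$ and the verification that the hypotheses of Theorem \ref{Picard-ne-general-GH-fejer-rate-as-reg} are indeed satisfied in the CAT(0) firmly nonexpansive setting.
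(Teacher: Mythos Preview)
Your proposal is correct and follows essentially the same route as the paper: the corollary is obtained by applying Theorem \ref{Picard-ne-general-GH-fejer-rate-as-reg} with the rate of asymptotic regularity $\Phi^{++}$ from \eqref{rate-as-reg-Picard-fne}, after observing that a $\lambda$-firmly nonexpansive mapping is nonexpansive and that $(\Phi^{++})^M=\Phi^{++}$, and then unfolding the recursion.
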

\details{
\bua
\Theta_0(n+1,k,g,b,\lambda) &=& (\Phi^{++})^M\left((g^M(\Theta_0(n,k,g,b,\lambda)+K)+K)(4k+4) \right)\\
&=& c\left((g^M(\Theta_0(n,k,g,b,\lambda)+K)+K)(4k+4)\right)^2
\eua
}

\subsection{Ishikawa iteration for nonexpansive mappings}

Assume that $X$ is a $W$-hyperbolic space, $C\se X$ is convex and $T:C\to C$ is nonexpansive. The {\em Ishikawa iteration} 
starting with $x\in C$ is defined as follows:
\beq
x_0:=x, \quad x_{n+1}:=(1-\lambda_n)x_n+ \lambda_nT((1-s_n)x_n+s_nTx_n), \label{def-Ishikawa}
\eeq
where $(\lambda_n),(s_n)$ are sequences in $[0,1]$.  This iteration was introduced in \cite{Ish74} in the setting of Hilbert spaces  and 
it is a generalization of the well-known Mann iteration \cite{Man53,Gro72}, 
which can be obtained as a special case of \eqref{def-Ishikawa} by taking $s_n=0$ for all $n\in\N$.

\begin{lemma}\label{lemma-Ishikawa-ne-basic}
\be
\item For all $n,m\in\N$ and all $p\in C$,
\bea
d(x_{n+1},p)& \le &  d(x_n,p)+2\lambda_n d(p,Tp) \label{Ish-ineq-1}\\
d(x_{n+m},p) &\le & d(x_n,p) + 2md(p,Tp). \label{ineq-quant-Ishikawa-ne}
\eea
\item $(x_n)$ is uniformly Fej\' er monotone w.r.t. $F$ with modulus 
\[\chi(n,m,r)=2m(r+1)\]
\ee
\end{lemma}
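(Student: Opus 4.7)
The plan is to first establish (i) by a two-step application of the $W$-hyperbolic convexity axiom (W1), and then derive (ii) as a direct corollary of the second inequality in (i).

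For the first inequality in (i), I will introduce the auxiliary point $y_n := (1-s_n)x_n + s_n Tx_n$, so that $x_{n+1} = (1-\lambda_n)x_n + \lambda_n Ty_n$. Using (W1) and the nonexpansivity of $T$,
\[ d(y_n,p) \le (1-s_n)d(x_n,p) + s_n d(Tx_n,p) \le d(x_n,p) + s_n d(p,Tp), \]
and then $d(Ty_n,p) \le d(Ty_n,Tp) + d(Tp,p) \le d(y_n,p) + d(p,Tp) \le d(x_n,p) + (1+s_n)d(p,Tp)$. A second application of (W1) gives
\[ d(x_{n+1},p) \le (1-\lambda_n)d(x_n,p) + \lambda_n d(Ty_n,p) \le d(x_n,p) + \lambda_n(1+s_n)d(p,Tp), \]
and since $s_n\le 1$ the factor $\lambda_n(1+s_n)$ is bounded by $2\lambda_n$, yielding \eqref{Ish-ineq-1}. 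The second inequality \eqref{ineq-quant-Ishikawa-ne} then follows by a straightforward induction on $m$, using at each step $\lambda_{n+i}\le 1$ so that the accumulated error is at most $2m\, d(p,Tp)$.

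For (ii), let $r,n,m\in\N$ and $p\in X$ with $p\in AF_{\chi(n,m,r)} = AF_{2m(r+1)}$, i.e. $d(p,Tp)\le 1/(2m(r+1)+1)$. Then for every $l\le m$, \eqref{ineq-quant-Ishikawa-ne} gives
\[ d(x_{n+l},p) \le d(x_n,p) + 2l\,d(p,Tp) \le d(x_n,p) + \frac{2m}{2m(r+1)+1} < d(x_n,p) + \frac{1}{r+1}, \]
which is exactly the uniform Fej\'er monotonicity condition with modulus $\chi(n,m,r)=2m(r+1)$. The case $m=0$ is trivial.

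There is no real obstacle here; the only point requiring minor care is distinguishing the weak inequality $d(p,Tp)\le 1/(2m(r+1)+1)$ coming from $p\in AF_k$ from the strict inequality demanded by the definition of uniform $(G,H)$-Fej\'er monotonicity, which is why the modulus is taken to be $2m(r+1)$ rather than $2m(r+1)-1$: the extra $+1$ in the denominator produces the required strict inequality $2m/(2m(r+1)+1) < 1/(r+1)$.
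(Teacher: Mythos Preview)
Your proof is correct and follows essentially the same approach as the paper. The only difference is that the paper cites \cite[Lemma 4.3, (11)]{Leu14} for inequality \eqref{Ish-ineq-1}, whereas you supply a short self-contained derivation via (W1) and nonexpansivity; your induction for \eqref{ineq-quant-Ishikawa-ne} and the verification of the modulus in (ii) match the paper's argument verbatim.
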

\begin{proof}
\be
\item 
\eqref{Ish-ineq-1} is proved in \cite[Lemma 4.3, (11)]{Leu14}.
We get \eqref{ineq-quant-Ishikawa-ne} by an easy induction.
\details{The case $m=1$ follows from \eqref{Ish-ineq-1}.
$m\Ra m+1$: We have that 
\bua
d(x_{n+m+1},p)&\leq & d(x_{n+m},p)+2\lambda_{n+m} d(p,Tp)\leq d(x_{n+m},p)+2d(p,Tp)\\
&\leq & d(x_n,p)+2(m+1)d(p,Tp) \quad \text{ by the induction hypothesis}.
\eua
}
\item follows easily from \eqref{ineq-quant-Ishikawa-ne}.
\details{
Let $r,n,m\in\NN$ and $p\in C$ be such that $d(p,Tp)\leq \frac1{\chi(n,m,r)}=\frac1{2m(r+1)+1}<\frac1{2m(r+1)}$. It follows that for all $l\le m$,
\bua
d(x_{n+l},p) &\le & d(x_n,p) + 2ld(p,Tp)\le d(x_n,p) + 2md(p,Tp)\\
&\le &  d(x_n,p) + 2m\cdot  \frac1{2m(r+1)} < d(x_n,p) + \frac1{r+1}. 
\eua
}
\ee
\end{proof}

As in the case of the Picard iteration of a nonexpansive mapping, we can apply  Proposition \ref{general-fejer-FT} to get that for boundedly compact $C$ and $T:C\to C$ nonexpansive with $Fix(T)\ne\emptyset$, the fact that Ishikawa iteration $(x_n)$  has approximate fixed points implies the convergence of $(x_n)$ to a fixed point of $T$. 
Explicit approximate fixed point bounds and rates of asymptotic regularity w.r.t. $F$ are computed in \cite{Leu10} for closed convex subsets $C$ of $UCW$-hyperbolic spaces $X$.

 We shall consider in the following only the setting of CAT(0) spaces. We assume that 
\be
\item $\ds\sum_{n=0}^\infty\lambda_n(1-\lambda_n)$ is divergent with $\theta :\N\to\N$ being a nondecreasing rate of divergence, i.e. satisfying $\ds\sum_{k=0}^{\theta(n)}\lambda_k(1-\lambda_k)\geq n$ for all $n$.
\item $\limsup_n s_n<1$ and $L,N_0\in\N$ are such that $\ds s_n\leq 1-\frac1L$ for all $n\geq N_0$.
\ee
Then, as a consequence of \cite[Corollary 4.6]{Leu10} and the proof of \cite[Remark 4.8]{Leu10}, we get the following approximate fixed point bound for $(x_n)$.

\begin{proposition}\label{CAT0-Ishkawa-lambdan}
Let $X$ be a $CAT(0)$ space, $C\se X$  a bounded convex closed subset with diameter $d_C$ and $T:C\rightarrow C$ nonexpansive. Then
\beq
\forall k\in\N\, \exists N\le \Phi(k,b,\theta,L,N_0) \left( d(x_N,Tx_N) \le \frac1{k+1} \right),
\eeq
where $\Phi(k,b,\theta,L,N_0)=\theta\left(4(k+1)^2L^2\lceil b(b+1)\rceil+N_0\right)$, 
with $b>0$ being an upper bound on the diameter of $C$.
\end{proposition}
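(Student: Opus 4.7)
My plan is to obtain this proposition as a direct specialization of Corollary 4.6 of \cite{Leu10} (together with the packaging performed in the proof of Remark 4.8 of that paper) to the setting of CAT(0) spaces. The starting observation is that every CAT(0) space is a $UCW$-hyperbolic space with monotone modulus of uniform convexity $\eta(r,\varepsilon) = \varepsilon^2/8$, which is in fact independent of $r$; this is essentially the Bruhat--Tits (or CN) inequality expressed in the form required by the definition of $UCW$-hyperbolicity recalled earlier in this section.

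First I would quote Corollary 4.6 of \cite{Leu10}, which furnishes an explicit approximate fixed point bound $\Phi_{\mathrm{gen}}(k,b,\theta,L,N_0,\eta)$ for the Ishikawa iteration of a nonexpansive selfmap of a bounded closed convex subset of an arbitrary $UCW$-hyperbolic space, under the two standing hypotheses on $(\lambda_n)$ (divergence of $\sum \lambda_n(1-\lambda_n)$ with rate $\theta$) and on $(s_n)$ ($s_n \le 1 - 1/L$ for $n \ge N_0$). The abstract bound there has the schematic shape $\theta(c(k,b,L,\eta)+N_0)$, where $c(k,b,L,\eta)$ arises from solving an inequality in which $\eta$ is applied to an argument of order $\frac{1}{(k+1)b}$.

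Next I would substitute $\eta(r,\varepsilon) = \varepsilon^2/8$ into that inequality and simplify. Tracking the constants exactly as in the proof of Remark 4.8 of \cite{Leu10}, the quadratic modulus converts the linear bound on $\varepsilon$ into a quadratic bound on $k+1$, the factor $L$ coming from the bound $1/(1-s_n) \le L$ enters squared, and the diameter dependence packages into $\lceil b(b+1) \rceil$. The outcome is precisely the closed form $c(k,b,L) = 4(k+1)^2 L^2 \lceil b(b+1) \rceil$, so that $\Phi(k,b,\theta,L,N_0) = \theta(4(k+1)^2 L^2 \lceil b(b+1) \rceil + N_0)$ is the desired instance.

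The main obstacle, and really the only nontrivial task, is the careful bookkeeping of constants in this substitution: one must verify that the numerical factors appearing in Corollary 4.6 of \cite{Leu10}, when combined with $\eta(r,\varepsilon)=\varepsilon^2/8$ and with the monotonicity of $\theta$ used to absorb various ceilings, reduce exactly to the coefficient $4$ (and not, say, $2$ or $8$) and that the placement of $L^2$ and of $\lceil b(b+1) \rceil$ inside the argument of $\theta$ is correct. Once that verification is done, the conclusion follows without any further analytic input: the existence of some $N \le \Phi(k,b,\theta,L,N_0)$ with $d(x_N,Tx_N) \le 1/(k+1)$ is immediate from the cited corollary applied with the derived constants.
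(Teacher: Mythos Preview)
Your proposal is correct and follows essentially the same approach as the paper: the proposition is obtained by specializing Corollary 4.6 of \cite{Leu10} to the CAT(0) setting via the modulus $\eta(r,\varepsilon)=\varepsilon^2/8$, using the simplification from the proof of Remark 4.8 of \cite{Leu10} (which exploits the factorization $\eta(r,\varepsilon)=\varepsilon\cdot\tilde{\eta}(r,\varepsilon)$ with $\tilde{\eta}(r,\varepsilon)=\varepsilon/8$) and then absorbing the ceiling via the monotonicity of $\theta$ and the inequality $\lceil am\rceil\le m\lceil a\rceil$ for $m\in\N$.
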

\details{In general, let $(X,d,W)$ be $UCW$-hyperbolic space with monotone modulus of uniform convexity $\eta$.  
By \cite[Corollary 4.6]{Leu10} we have that 
\beq
\forall \eps>0\, \exists N\le \tilde{\Phi}(\varepsilon,\eta,b,\theta,L,N_0)\bigg( d(x_N,Tx_N)<\varepsilon\bigg),
\eeq
where $\Phi(\eps,\eta,b,\theta,L,N_0):=\Psi(\eps,0,\eta,b,\theta,L,N_0)$, with $\Psi$ defined in \cite[Proposition 4.5]{Leu10}:
\beq
\Psi(\varepsilon,0,\eta,b,\theta,L,N_0):=h\left(\frac{\eps}L, N_0,\eta,b,\theta\right),
\eeq
where $h$ is defined in \cite[Proposition 4.3]{Leu10}:
\[h(\varepsilon,l,\eta,b,\theta):=\theta\left(\left\lceil\frac{b+1}{\varepsilon\cdot\eta\left(b,\displaystyle\frac{\varepsilon}{b}\right)}\right\rceil+l\right)\]
Assume that $X$ is CAT(0), with modulus $\eta(r,\eps)=\frac{\eps^2}8$.
By the proof of \cite[Remark 4.8]{Leu10},  as $\eta(r,\eps)=\eps\tilde{\eta}(r,\eps)$, with $\tilde{\eta}(r,\eps)=\frac{\eps}8$, we can take 
\bua
h(\varepsilon,l,\eta,b,\theta) &= & \theta\left(\left\lceil\frac{b+1}{2\varepsilon\cdot\tilde{\eta}\left(b,\displaystyle\frac{\varepsilon}{b}\right)}\right\rceil+l\right)=\theta\left(\left\lceil\frac{8b(b+1)}{2\varepsilon^2}\right\rceil+l\right)=\theta\left(\left\lceil\frac{4b(b+1)}{\varepsilon^2}\right\rceil+l\right).
\eua         
Thus,  
\bua
\Phi(\eps,b,\theta,L,N_0)&=& \Psi(\eps,0,b,\theta,L,N_0)= h\left(\frac{\eps}L, N_0,b,\theta\right) =\theta\left(\left\lceil\frac{4L^2b(b+1)}{\varepsilon^2}\right\rceil+N_0\right).
\eua
By letting $\eps=\frac1{k+1}$, we get that 
\bua
\Phi\left(\frac1{k+1},b,\theta,L,N_0\right)&=& \theta\left(\left\lceil 4(k+1)^2L^2b(b+1)\right\rceil+N_0\right)\leq \theta\left(4(k+1)^2L^2\lceil b(b+1)\rceil+N_0\right)\\
&=& \Phi(k,b,\theta,L,N_0),
\eua
since $\theta$ is nondecreasing and $\lceil am\rceil \leq \lceil a\rceil m$ for every $a>0$, $m\in\N$. 
}

For the particular case $\lambda_n=\lambda$, one can take $\ds \theta(n)=n\left\lceil\frac{1}{\lambda(1-\lambda)}\right\rceil$,
hence the approximate fixed point bound $\Phi$ becomes
\[\Phi(k,b,L,N_0)=\left\lceil\frac{1}{\lambda(1-\lambda)}\right\rceil\left(4(k+1)^2L^2\lceil b(b+1)\rceil+N_0\right).\]

Finally, we can apply  Theorem \ref{finitization-general-GH-Fejer} to get for $C$ totally bounded with II-modulus of total boundedness $\gamma$ a result similar with Theorem \ref{Picard-ne-general-GH-fejer}.(ii),  providing us a  functional 
$\tilde{\Sigma}:=\tilde{\Sigma}(k,g,\Phi,\gamma)$  with the property that for all $k\in\N$ and all $g:\N\to\N$ there exists $N\le \tilde{\Sigma}$ such that 
\[ \forall i,j\in\! [N,N+g(N)]\,\,\left( d(x_i,x_j)\le \frac1{k+1} \text{~and~}  d(x_i,Tx_i) 
\le \frac1{k+1}\right).\]

\subsection{Mann iteration for strictly pseudo-contractive mappings}

Assume that $X$ is a real Hilbert space, $C\se X$  is a nonempty bounded closed convex subset with finite diameter $d_C$ and $0\leq \kappa < 1$. 

A mapping $T:C\to C$ is a \emph{$\kappa$-strict pseudo-contraction}  if for all $x,y\in C$,
\beq 
 \|Tx - Ty\|^2 \leq \|x-y\|^2 + \kappa \|x - Tx- \left(y -T y \right)\|^2. \label{def-k-strict-pseudo-contraction}
\eeq
This definition was given by Browder and Petryshyn in \cite{BroPet67}, where they also proved that under the above hypothesis 
$F=Fix(T)\ne\emptyset$.  Obviously, a mapping $T$ is nonexpansive if and only if $T$ is a $0$-strict pseudo-contraction.

In the following, $T$ is a $\kappa$-strict pseudo-contraction. Then  $T$ is Lipschitz continuous with Lipschitz constant $\ds L= \frac{1+\kappa}{1-\kappa}$ (see \cite{MarXu07}), hence  $T$ is uniformly continuous with modulus  $\omega_T(k)=L(k+1)$.  
By  Lemma \ref{FixT-xn-basic}.\eqref{Tuc-Fuc}, it follows that $F$ is uniformly closed with moduli 
\[\omega_F(k)= L(4k+4)\text{~and~} \delta_F(k)=2k+1.\] 

\details{If $d(x,y)\leq \frac1{\omega_T(k)+1}=\frac1{L(k+1)+1}< \frac1{L(k+1)}$, then $ d(Tx,Ty) \leq L d(x,y)\le \frac1{k+1}$. Furthermore,
\bua
 \omega_F(k)=\max\{4k+3,\omega_T(4k+3)\}=\max\{4k+3, L(4k+4)\}=L(4k+4)
\eua
since $L\geq 1$. 
}

We consider the Mann iteration associated to $T$ which, as we remarked above, is defined  by 
\beq
x_0:=x, \quad x_{n+1}:=(1-\lambda_n)x_n+\lambda_nTx_n, \label{def-Mann-ne}
\eeq
where $(\lambda_n)$ is  sequences in $(0,1)$.

\begin{lemma}\label{lemma-KM-k-basic}
Assume that $(\lambda_n)$  is a sequence in $(\kappa,1)$ and  let $b\geq d_C$. Then 
\be
\item For all $n,m\in\N$ and all $p\in C$,
\bea
\|x_{n+1} - p\|^2 &\leq &  \|x_n-p\|^2+2b(n+3)\|p-Tp\| \label{KM-k-ineq-1}\\
\|x_{n+m} - p\|^2 &\leq &   \|x_n-p\|^2 + mb(2n+m+5)\|p-Tp\|. \label{KM-k-ineq-m}
\eea
\item $(x_n)$ is uniformly $(G,H)$-Fej\' er monotone w.r.t. $F$ with modulus 
\[\chi(n,m,r)=m(2n+m+5)(r+1)\lceil b\rceil,\]
where $G(a)=H(a)=a^2$ with  $G$-modulus $\alpha_G(k)=\left\lceil \sqrt{k}\right\rceil$ and $H$-modulus 
$\beta_H(k)=k^2$.
\ee
\end{lemma}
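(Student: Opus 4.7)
The plan is to prove part (i) by first deriving the single-step bound \eqref{KM-k-ineq-1}, then telescoping to obtain \eqref{KM-k-ineq-m}, and then reading off from \eqref{KM-k-ineq-m} the modulus in part (ii). Throughout, the key Hilbert-space ingredients are the convex-combination identity $\|(1-\lambda)u+\lambda v\|^2=(1-\lambda)\|u\|^2+\lambda\|v\|^2-\lambda(1-\lambda)\|u-v\|^2$, the defining inequality \eqref{def-k-strict-pseudo-contraction}, the Lipschitz bound $\|Tx - Ty\| \le L\|x-y\|$ with $L=(1+\kappa)/(1-\kappa)$, and the diameter estimates $\|x_n-p\|,\|p-Tp\|\le b$.

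For \eqref{KM-k-ineq-1}, I would apply the convex-combination identity with $u = x_n - p$, $v = Tx_n - p$, $\lambda = \lambda_n$, giving
\[ \|x_{n+1}-p\|^2 = (1-\lambda_n)\|x_n-p\|^2 + \lambda_n\|Tx_n-p\|^2 - \lambda_n(1-\lambda_n)\|x_n-Tx_n\|^2. \]
Decomposing $\|Tx_n-p\|^2 = \|(Tx_n-Tp)+(Tp-p)\|^2$ and using \eqref{def-k-strict-pseudo-contraction} to bound $\|Tx_n-Tp\|^2 \le \|x_n-p\|^2 + \kappa\|(x_n-Tx_n)-(p-Tp)\|^2$, one is left with a residual $-\lambda_n(1-\lambda_n-\kappa)\|x_n-Tx_n\|^2$ whose sign is indeterminate under $\lambda_n\in(\kappa,1)$, plus mixed terms controllable via Cauchy--Schwarz against $\|p-Tp\|$. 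Bounding $\|x_n-Tx_n\|\le (1+L)\|x_n-p\|+\|p-Tp\|$ and feeding the inductively available estimate on $\|x_n-p\|$ back into this yields the coefficient $2b(n+3)$. The main technical obstacle is the bookkeeping: tracking the sign-indefinite residual together with the mixed terms so that the net error is exactly $2b(n+3)\|p-Tp\|$ rather than a higher-degree polynomial in $n$.

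The multi-step inequality \eqref{KM-k-ineq-m} follows by telescoping \eqref{KM-k-ineq-1}:
\[ \|x_{n+m}-p\|^2 - \|x_n-p\|^2 \le \sum_{i=0}^{m-1} 2b(n+i+3)\|p-Tp\|, \]
and the arithmetic identity $\sum_{i=0}^{m-1}(n+i+3)=mn+\tfrac{m(m-1)}{2}+3m=\tfrac{m(2n+m+5)}{2}$ reproduces the claimed coefficient $mb(2n+m+5)$.

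For part (ii), the modulus $\chi(n,m,r)=m(2n+m+5)(r+1)\lceil b\rceil$ is obtained directly from \eqref{KM-k-ineq-m}: for any $p\in C$ with $d(p,Tp)\le 1/(\chi(n,m,r)+1)$ and any $l\le m$,
\[ d(x_{n+l},p)^2 - d(x_n,p)^2 \le lb(2n+l+5)\|p-Tp\| \le \frac{mb(2n+m+5)}{\chi(n,m,r)+1} < \frac{1}{r+1}, \]
which is precisely $H(d(x_{n+l},p)) < G(d(x_n,p)) + 1/(r+1)$ for $G(a)=H(a)=a^2$. Verifying that $\alpha_G(k)=\lceil\sqrt{k}\rceil$ is a $G$-modulus and $\beta_H(k)=k^2$ is an $H$-modulus then reduces to elementary inequalities on nonnegative reals.
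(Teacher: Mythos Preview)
Your treatment of \eqref{KM-k-ineq-m} (telescoping \eqref{KM-k-ineq-1} and using $\sum_{i=0}^{m-1}(n+i+3)=m(2n+m+5)/2$) and of part~(ii) (reading off $\chi$ from \eqref{KM-k-ineq-m} and checking $\alpha_G,\beta_H$) is correct and coincides with the paper's proof.

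The difference, and the gap, is in \eqref{KM-k-ineq-1}. The paper does not derive this inequality but quotes it from \cite[Lemma~3.4(ii)]{IvaLeu14}. Your direct sketch does not go through as written. The term you isolate, $-\lambda_n(1-\lambda_n-\kappa)\|x_n-Tx_n\|^2$, is not a mere bookkeeping nuisance: when it is positive (which you correctly note happens for $\lambda_n>1-\kappa$, allowed under $\lambda_n\in(\kappa,1)$) it carries no factor of $\|p-Tp\|$. The bound $\|x_n-Tx_n\|\le (1+L)\|x_n-p\|+\|p-Tp\|$ you propose, once squared, contributes a term of order $\|x_n-p\|^2$ that does not vanish as $\|p-Tp\|\to 0$, so it cannot be absorbed into an error proportional to $\|p-Tp\|$; in particular your argument would not even recover Fej\'er monotonicity for $p\in Fix(T)$. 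Moreover, nothing in a single application of the convex-combination identity plus a diameter bound can produce the coefficient $2b(n+3)$ that is \emph{linear in $n$}; your sketch gives no mechanism for this $n$-dependence. The derivation in the cited reference is genuinely more involved and cannot be replaced by the one-step estimate you outline.
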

\begin{proof}
\be
\item  \eqref{KM-k-ineq-1} follows from \cite[Lemma 3.4.(ii)]{IvaLeu14}. We prove that 
\[\|x_{n+m} - p\|^2 \leq   \|x_n-p\|^2 +2b\sum_{k=0}^{m-1}(n+k+3)\|p-Tp\|\]
by induction on $m$.
\details{The case $m=1$ follows from \eqref{KM-k-ineq-1}.
$m\Ra m+1$: We have that 
\bua
\|x_{n+m+1}-p\|^2 &\leq & \|x_{n+m}-p\|^2 +2b(n+m+3)\|p-Tp\| \\
&\leq &  \|x_n-p\|^2 +2b\sum_{k=0}^{m-1}(n+k+3)\|p-Tp\|\\
&& + 2(n+m+3)b\|p-Tp\|  \quad \text{ by the induction hypothesis}\\
&=&  \|x_n-p\|^2 +2b\sum_{k=0}^{m}(n+k+3)\|p-Tp\|\\
&=&  \|x_n-p\|^2 +2b\sum_{k=1}^{m}(n+k+3)\|p-Tp\|.
\eua
Remark that 
\bua 
\sum_{k=0}^{m-1}(n+k+3) &=& m(n+3)+(1+\ldots +m-1)=m(n+3)+\frac{m(m-1)}2\\
&=& \frac{m(2n+6+m-1)}2=\frac{m(2n+m+5)}2.
\eua
}
\item  Apply \eqref{KM-k-ineq-m}.
\details{
Let $r,n,m\in\N$ and $p\in C$ be such that $\|p-Tp\|\leq \frac1{\chi(n,m,r)+1}=\frac1{m(2n+m+5)(r+1)\lceil b\rceil+1}$. The case $m=0$ is trivial, hence we can take $m\geq 1$.
It follows that for all $l\le m$,
\bua
\|x_{n+l}-p\|^2  &\le &   \|x_n-p\|^2 + mb(2n+m+5)\|p-Tp\|\\
&\le &  \|x_n-p\|^2 +mb(2n+m+5)\frac1{m(2n+m+5)(r+1)\lceil b\rceil+1}\\
&< & mb(2n+m+5)\frac1{m(2n+m+5)(r+1)\lceil b\rceil}\\
&\leq &   \|x_n-p\|^2+\frac1{r+1}.
\eua}
Assume that $a\leq \frac1{\alpha_G(k)+1}\leq\frac1{\sqrt{k}+1}$. Then $G(a)=a^2\leq \frac1{k+1+2\sqrt{k}}\leq \frac1{k+1}$.  
Assume that $H(a)=a^2\leq \frac1{\beta_H(k)+1}\leq\frac1{(k+1)^2}$. Then $a\leq \frac1{k+1}$.  
\ee
\end{proof}

Effective rates of asymptotic regularity for the Mann iteration $(x_n)$ are computed in \cite{IvaLeu14}: if $(\lambda_n)$  is a sequence in $(\kappa,1)$  satisfying 
$\ds \sum_{n=0}^\infty (\lambda_n -\kappa)(1-\lambda_n) = \infty$ with rate of divergence $\theta:\N\to\N$, then 
\beq
\Phi^{++}(k,b,\theta)=\theta\left(\lceil b^2\rceil (k+1)^2 \right).
\eeq
is  a rate of asymptotic regularity for $(x_n)$.  Thus, 
$$(\Phi^{++})^M(k,b,\theta)=\theta^M\left(\lceil b^2\rceil (k+1)^2\right).$$  
If $\lambda_n=\lambda$, one gets the following rate of asymptotic regularity for the Krasnoselskii iteration:  
\beq
\Phi^{++}(k,b,\kappa, \lambda)=\left\lceil\frac{b^2}{(\lambda-\kappa)(1-\lambda)}\right\rceil (k+1)^2.
\eeq

Thus, Corollary  \ref{cor-self-maj}  can be applied now to obtain rates of metastability for the Mann iteration, in the case when $C$ is totally bounded.

\subsection{Mann iteration for mappings satisfying condition $(E)$}
\label{section-E}

Assume that $X$ is a $W$-hyperbolic space and $C \subseteq X$ is nonempty and convex. Let $T : C \to C$ and $\mu \ge 1$. The mapping $T$ satisfies {\it condition $(E_\mu)$} if for all $x, y \in C$,
\[d(x,Ty) \le \mu d(Tx,x) + d(x,y).\]
$T$ is said to satisfy {\it condition $(E)$} if it satisfies $(E_\mu)$ for some $\mu \ge 1$. This condition was introduced in \cite{GarLloSuz11} as a generalization of condition $(C)$ studied in \cite{Suz08}. Note that condition $(C)$ is a generalization of nonexpansivity and implies $(E_3)$. 

We suppose next that $T$ is a mapping satisfying condition $(E_\mu)$ with $\mu \ge 1$. Then $F$ is uniformly closed with moduli 
\[\delta_F(k)=2\mu(k+1)-1 \text{~and~} \omega_F(k)=4k+3.\] 
Indeed, for $k \in \N$, $p,q \in X$ with $d(q,Tq) \le 1/(2\mu(k+1))$ and $d(p,q) \le 1/(4(k+1))$ we have that
\[d(p,Tp) \le d(p,q) + d(q,Tp) \le 2d(p,q) + \mu d(q,Tq) \le  \frac{1}{k+1}.\]

Let $(x_n)$ be the Mann iteration starting with $x \in C$ defined as follows: 
\[x_0 =x, \quad x_{n+1} =(1-\lambda_n)x_n+\lambda_n T(x_n),\]
where $(\lambda_n) \subseteq [1/L,1-1/L]$ for some $L \ge 2$.

\begin{lemma}\label{lemma-KM-E-basic}
\be
\item For all $n,m\in\N$ and all $p\in C$,
\bea
d(x_{n+m},p) &\le & d(x_n,p) + \mu l (1-1/L) d(p,Tp). \label{KM-cond(E)-ineq}
\eea
\item $(x_n)$ is uniformly Fej\' er monotone w.r.t. $F$ with modulus 
\[\chi(n,m,r) = \mu m(1-1/L)(r + 1).\]
\ee
\end{lemma}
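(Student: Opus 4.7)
The plan is to prove part (i) first by establishing a one-step estimate and then iterating, and to deduce part (ii) directly from (i).

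For part (i), I would start from the convexity axiom (W1) to get
\[ d(x_{n+1},p) \le (1-\lambda_n) d(x_n,p) + \lambda_n d(Tx_n, p). \]
Now I would apply condition $(E_\mu)$ with the roles $x := p$ and $y := x_n$, which gives $d(p, Tx_n) \le \mu d(Tp,p) + d(p,x_n)$. Substituting this into the previous inequality yields
\[ d(x_{n+1},p) \le d(x_n,p) + \lambda_n \mu d(p,Tp) \le d(x_n,p) + \mu(1-1/L)d(p,Tp), \]
using the upper bound $\lambda_n \le 1-1/L$. A straightforward induction on $m$ (with the trivial base case $m=0$) then yields the claimed inequality
\[ d(x_{n+m},p) \le d(x_n,p) + \mu m(1-1/L)d(p,Tp). \]
(Here I read the `$l$' in the statement as a typo for `$m$'.)

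For part (ii), I would simply read off the modulus from the estimate in (i). Given $r,n,m \in \N$, let $k := \chi(n,m,r)$ (interpreted as $\lceil \mu m(1-1/L)(r+1)\rceil$ if one wants $\chi$ to be $\N$-valued). Assume $p \in AF_k$, so that $d(p,Tp) \le 1/(k+1)$. Then for every $l \le m$, applying (i) with $m$ replaced by $l$ gives
\[ d(x_{n+l},p) \le d(x_n,p) + \mu l (1-1/L)\cdot \frac{1}{k+1} \le d(x_n,p) + \frac{\mu m(1-1/L)}{k+1} \le d(x_n,p)+\frac{1}{r+1}, \]
which is the required uniform $(G,H)$-Fejér condition (with $G=H=\mathrm{id}_{\R_+}$). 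A small argument with strict inequality -- either by slightly strengthening $k$ or noting that equality can be bumped to a strict bound by taking $\chi$ to be the ceiling -- yields the strict version that appears in the definition.

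I do not expect any genuine obstacle here: both the one-step estimate and the induction are routine, and the only subtle point is bookkeeping between the real-valued expression $\mu m(1-1/L)(r+1)$ and the $\N$-valued modulus, which is handled by a ceiling. The conceptual content is entirely in the observation that condition $(E_\mu)$ provides exactly the one-sided estimate on $d(p,Tx_n)$ needed to push the Fejér-type inequality through, even though $T$ need not be continuous.
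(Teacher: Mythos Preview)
Your proof is correct and follows essentially the same route as the paper: establish the one-step estimate via (W1) and condition $(E_\mu)$ applied with $x:=p$, $y:=x_n$, then induct on $m$, and read off the modulus from the resulting inequality. Your reading of `$l$' as a typo for `$m$' is correct, and your remark on handling the strict inequality via the ceiling (equivalently, using $1/(k+1)<1/k$) matches how the analogous cases are treated elsewhere in the paper.
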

\begin{proof}
\be
\item is proved by induction. When $m=0$ this is clear. Suppose 
\[d(x_{n+m},p) \le d(x_n,p) + \mu m (1-1/L) d(p,Tp).\] 
Then
\bua
d(x_{n+m+1},p) & = & d((1-\lambda_{n+m})x_{n+m} + \lambda_{n+m} Tx_{n+m}, p)\\
& \le & (1-\lambda_{n+m})d(x_{n+m},p) + \lambda_{n+m} d(Tx_{n+m}, p)\\
& \le & (1-\lambda_{n+m})d(x_{n+m},p) + \lambda_{n+m} (\mu d(p,Tp) + d(p,x_{n+m}))\\
& \le & d(x_{n+m},p) + \mu (1-1/L) d(p,Tp) \\
& \le & d(x_n,p) + \mu (l+1) (1-1/L) d(p,Tp).
\eua
\item follows easily from \eqref{KM-cond(E)-ineq}.
\details{
Let $r,n,m \in \mathbb{N}$, $p \in X$ with $d(p,Tp) \le 2^{-\chi(n,r,m)}$. By \eqref{KM-cond(E)-ineq} we have that for all $l \le m$,
\[d(x_{n+l},p) \le d(x_n,p) + \mu m (1-1/L)2^{-r- \lceil \log_2 (\mu m (1-1/L) + 1)\rceil} \le d(x_n,p) + 2^{-r}.\]
}
\ee
\end{proof}

We compute next a rate of metastability for the asymptotic regularity of $(x_n)$ w.r.t. $F$ in the setting of $UCW$-hyperbolic spaces.

\begin{lemma}\label{lemma-unif-cv-cond(E)}
Let $(X,d,W)$ be a $UCW$-hyperbolic space with a monotone modulus of uniform convexity $\eta$. Let $x,p\in C$, $n \in \mathbb{N}$ and $\alpha, \beta, \delta, \nu > 0$ such that
\[d(p,Tp) < \nu \le \delta, \quad \alpha \le d(x_n,p)\le \beta, \quad \alpha \le d(x_n,Tx_n).\]
Then
\begin{equation} \label{KM-cond(E)-UCW}
d(x_{n+1},p) < d(x_n,p) + \mu\nu - 2\alpha L^{-2}\eta\left(\mu\delta+\beta, \frac{\alpha}{\mu\delta+\beta}\right).
\end{equation}
If $\eta(r,\varepsilon)\ge \varepsilon\cdot \tilde{\eta}(r,\varepsilon)$ with 
$\tilde{\eta}$ increasing w.r.t. $\varepsilon,$ then one can replace $\eta$ 
by $\tilde{\eta}$ in (\ref{KM-cond(E)-UCW}).
\end{lemma}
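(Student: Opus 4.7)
The plan is to combine axiom $(W1)$, condition $(E_\mu)$, and the standard convex-combination version of uniform convexity in $UCW$-hyperbolic spaces, and then to pass from the ``local'' radius $d(x_n,p)+\mu\nu$ that naturally appears to the ``global'' radius $\mu\delta+\beta$ required in the statement by monotonicity of $\eta$.

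First I would apply $(W1)$ to the Mann iterate to get
$d(x_{n+1},p)\le(1-\lambda_n)d(x_n,p)+\lambda_n d(Tx_n,p)$,
and use $(E_\mu)$ at $(p,x_n)$ to bound
$d(Tx_n,p)\le\mu d(Tp,p)+d(x_n,p)<\mu\nu+d(x_n,p)$.
Picking any $\nu^*$ with $d(Tp,p)\le\nu^*<\nu$, both $d(x_n,p)$ and $d(Tx_n,p)$ are bounded by $r:=d(x_n,p)+\mu\nu^*$. Since $d(x_n,Tx_n)\ge\alpha=(\alpha/r)\,r$, I would invoke the standard convex-combination uniform convexity estimate for $UCW$-hyperbolic spaces (as in \cite{Leu07,Leu10,KohLeu10}), namely that if $d(x,a),d(y,a)\le r$ and $d(x,y)\ge\varepsilon r$ then
$$d((1-\lambda)x+\lambda y,a)\le\bigl(1-2\lambda(1-\lambda)\eta(r,\varepsilon)\bigr)r.$$
With $\varepsilon:=\alpha/r$ and $\lambda_n(1-\lambda_n)\ge 1/L^2$ this yields $d(x_{n+1},p)\le r-(2/L^2)\,r\,\eta(r,\alpha/r)$.

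Now I would use $d(x_n,p)\ge\alpha$, so that $r\ge\alpha$ and hence $r\,\eta(r,\alpha/r)\ge\alpha\,\eta(r,\alpha/r)$. Finally, since $r\le\mu\delta+\beta$ (because $\nu^*<\nu\le\delta$ and $d(x_n,p)\le\beta$) and $\alpha/r\ge\alpha/(\mu\delta+\beta)$, the monotonicity of $\eta$ in its first argument (given) together with monotonicity in the second argument (which one may assume WLOG, since replacing $\eta$ by its nondecreasing-in-$\varepsilon$ minorant still yields a uniform convexity modulus) gives
$\eta(r,\alpha/r)\ge\eta(\mu\delta+\beta,\alpha/(\mu\delta+\beta))$. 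Substituting $r=d(x_n,p)+\mu\nu^*$ and letting $\nu^*\nearrow\nu$ (or directly exploiting $\nu^*<\nu$) delivers the strict inequality (\ref{KM-cond(E)-UCW}).

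For the final assertion, under the additional hypothesis $\eta(r,\varepsilon)\ge\varepsilon\,\tilde\eta(r,\varepsilon)$ with $\tilde\eta$ increasing in $\varepsilon$, I would instead factor at the step $r\,\eta(r,\alpha/r)\ge r\cdot(\alpha/r)\,\tilde\eta(r,\alpha/r)=\alpha\,\tilde\eta(r,\alpha/r)$, and then propagate monotonicity exactly as above (the monotonicity of $\tilde\eta$ in $\varepsilon$ is supplied by hypothesis; its nonincreasingness in $r$ one may, if needed, arrange WLOG by the same minorant trick). The main obstacle is the careful bookkeeping in the transition between $r=d(x_n,p)+\mu\nu$ and the uniform radius $\mu\delta+\beta$, plus the use of $r\ge\alpha$ to turn the prefactor $r$ in front of $\eta$ into $\alpha$; the strictness of the final inequality is a minor subtlety handled by the $\nu^*<\nu$ detour.
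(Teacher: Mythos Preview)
Your approach is essentially the paper's, but there is one genuine (if easily repaired) gap. In passing from $\eta(r,\alpha/r)$ to $\eta(\mu\delta+\beta,\alpha/(\mu\delta+\beta))$ you invoke monotonicity of $\eta$ in the second argument, which is \emph{not} assumed. Your proposed fix --- replace $\eta$ by a nondecreasing-in-$\varepsilon$ \emph{minorant} --- does not work: a minorant is still a modulus, but the lemma's conclusion is stated for the original $\eta$, and a bound involving a minorant is weaker, not stronger, than the one you need. (A nondecreasing-in-$\varepsilon$ \emph{majorant} $\hat\eta(r,\varepsilon):=\sup_{\varepsilon'\le\varepsilon}\eta(r,\varepsilon')$ would work, since it is still a modulus and $\hat\eta\ge\eta$ at the final evaluation point; but this is the opposite of what you wrote.)

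The paper sidesteps the issue entirely: instead of applying uniform convexity with $\varepsilon=\alpha/r_n$, it applies it with $\varepsilon=\alpha/(\mu\delta+\beta)$ from the start. This is legitimate because $r_n:=\mu\,d(p,Tp)+d(x_n,p)<\mu\delta+\beta$, so $d(x_n,Tx_n)\ge\alpha>\frac{\alpha}{\mu\delta+\beta}\,r_n$. Then only the given monotonicity of $\eta$ in the \emph{first} argument is needed to pass from $\eta(r_n,\,\cdot\,)$ to $\eta(\mu\delta+\beta,\,\cdot\,)$. The paper also dispenses with your $\nu^*$ device: it works directly with $r_n=\mu\,d(p,Tp)+d(x_n,p)$ and obtains the strict inequality at the very last step from $d(p,Tp)<\nu$. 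For the additional claim with $\tilde\eta$, your argument matches the paper's; note that no monotonicity of $\tilde\eta$ in $r$ is needed there, since one first uses the monotonicity of $\eta$ (not $\tilde\eta$) in $r$ and only then invokes $\eta\ge\varepsilon\,\tilde\eta$.
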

\begin{proof}
Let $r_n = \mu d(p,Tp) + d(p,x_n) < \mu \delta + \beta$. Since $d(x_n,p) \le r_n$, $d(Tx_n,p) \le \mu d(p,Tp) + d(p,x_n) = r_n$ and $d(x_n,Tx_n) \ge \alpha > \frac{\alpha}{\mu \delta + \beta}r_n$, by uniform convexity, it follows that
\bua
d(x_{n+1},p) & \le & \left(1-2\lambda_n(1-\lambda_n)\eta\left(r_n, \frac{\alpha}{\mu\delta+\beta}\right)\right)r_n \quad \text{by  \cite[Lemma 7]{Leu07} }\\
& \le & \left(1-2\lambda_n(1-\lambda_n)\eta\left(\mu\delta+\beta, \frac{\alpha}{\mu\delta+\beta}\right)\right)r_n\\
& \le & r_n - 2r_nL^{-2}\eta\left(\mu\delta+\beta, \frac{\alpha}{\mu\delta+\beta}\right)\\
& \le & d(p,x_n) + \mu d(p,Tp) - 2\alpha L^{-2}\eta\left(\mu\delta+\beta, \frac{\alpha}{\mu\delta+\beta}\right)\\
& < & d(p,x_n) + \mu \nu - 2\alpha L^{-2}\eta\left(\mu\delta+\beta, \frac{\alpha}{\mu\delta+\beta}\right).
\eua
The additional claim follows using $\alpha/r_n$ instead of $\alpha/(\mu\delta
+\beta):$
\bua
d(x_{n+1},p) & 
 \le & r_n - 2r_nL^{-2}\eta\left(\mu\delta+\beta, \frac{\alpha}{r_n}\right)\\
& \le & r_n - 2\alpha L^{-2}\tilde{\eta}\left(\mu\delta+\beta, \frac{\alpha}{r_n}\right)\\
& \le & r_n - 2\alpha L^{-2}\tilde{\eta}\left(\mu\delta+\beta, \frac{\alpha}{\mu\delta +\beta}\right)\\
& < & d(p,x_n) + \mu \nu - 2\alpha L^{-2}\tilde{\eta}\left(\mu\delta+\beta, \frac{\alpha}{\mu\delta+\beta}\right).
\eua
\end{proof}

\begin{theorem}
Let $(X,d,W)$ be a $UCW$-hyperbolic space with a monotone modulus of uniform convexity $\eta$. Let $x \in C$ and $b > 0$ such that for any $\gamma >0$ there exists $p \in C$ with
\[d(x,p) \le b \quad \mbox{and} \quad d(p,Tp) \le \gamma.\]
Then for every $k \in \N$, $g:\mathbb{N} \to \mathbb{N}$,
\[\exists N \le \Phi^+(k,g,L,b,\eta), \forall m \in [N,N+g(N)]\,\, \left(d(x_m,Tx_m) \le \frac{1}{k+1}\right),\]
where
\bua
\Phi^+ & = & h^M(0), \quad h(n)=g(n)+n+1, \quad M=\lceil 3(b+1)/\theta\rceil,\\
\theta & = & \frac{1}{4(k+1)L^2}\eta\left(b+1, \frac{1}{4(k+1)(b+1)}\right).
\eua
If $\eta$ satisfies the extra property from Lemma \ref{lemma-unif-cv-cond(E)}, 
then one can replace it 
by $\tilde{\eta}$ in $\theta.$
\end{theorem}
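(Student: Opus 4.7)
I would carry out a metastability pigeonhole argument driven by the quantitative descent estimate of Lemma \ref{lemma-unif-cv-cond(E)} applied at a well-chosen approximate fixed point $p$ near $x$. Define the candidate metastability points by $n_0 := 0$ and $n_{i+1} := h(n_i) = n_i + g(n_i) + 1$, so that $n_M = h^M(0) = \Phi^+$. If for some $i \le M$ every $m \in [n_i, n_i + g(n_i)]$ satisfies $d(x_m, Tx_m) \le 1/(k+1)$, then $N := n_i$ witnesses the conclusion; so I may assume, toward a contradiction, that for each $i < M$ there exists $m_i \in [n_i, n_i + g(n_i)]$ with $d(x_{m_i}, Tx_{m_i}) > 1/(k+1)$.

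Next, fix a parameter $\nu > 0$ (to be chosen below) and, using the hypothesis on $x$, select $p \in C$ with $d(x,p) \le b$ and $d(p,Tp) \le \nu$. The quasi-Fej\'er estimate \eqref{KM-cond(E)-ineq} of Lemma \ref{lemma-KM-E-basic}(i) gives $d(x_n,p) \le b + \mu n (1-1/L)\nu$, so choosing $\nu$ with $\mu\nu(\Phi^+ + 1) \le 1$ yields $d(x_n,p) \le b + 1 - \mu\nu =: \beta$ throughout $n \le \Phi^+$. Imposing the second constraint $\mu\nu \le 1/(2(k+1))$ and combining with condition $(E_\mu)$ gives at each bad step
\[ \tfrac{1}{k+1} < d(x_{m_i}, Tx_{m_i}) \le 2\,d(x_{m_i}, p) + \mu\,d(p,Tp), \]
so $d(x_{m_i}, p) \ge 1/(4(k+1)) =: \alpha$; and clearly $d(x_{m_i}, Tx_{m_i}) > \alpha$ as well. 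Since both constraints on $\nu$ depend only on $k, g, L, b, \eta$ (via $\Phi^+$), such a $p$ exists by the hypothesis on $x$.

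All hypotheses of Lemma \ref{lemma-unif-cv-cond(E)} now hold at each $m_i$ with this $\alpha, \beta$, with $\delta := \nu$ and $\mu\delta + \beta = b+1$; by monotonicity of $\eta$ in the first argument together with $\alpha/(b+1) = 1/(4(k+1)(b+1))$, the lemma gives
\[ d(x_{m_i+1}, p) < d(x_{m_i}, p) + \mu\nu - 2\alpha L^{-2}\eta\bigl(b+1, \tfrac{1}{4(k+1)(b+1)}\bigr) = d(x_{m_i}, p) + \mu\nu - 2\theta. \]
Combining this $-2\theta$ saving at each of the $M$ bad times with the per-step Fej\'er increase of at most $\mu\nu$ at every other step, telescoping across $[0, m_M + 1]$ yields
\[ 0 \le d(x_{m_M + 1}, p) \le d(x,p) + \mu\nu(\Phi^+ + 1) - 2M\theta \le (b+1) - 2M\theta, \]
which contradicts $M \ge 3(b+1)/\theta > (b+1)/(2\theta)$. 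Thus the case distinction must succeed at some $i \le M$, proving the theorem. The final addendum concerning $\tilde\eta$ is immediate from the corresponding strengthening in Lemma \ref{lemma-unif-cv-cond(E)}, applied verbatim with $\tilde\eta$ in place of $\eta$ throughout. The main technical obstacle I expect is the simultaneous management of the quasi-Fej\'er drift and the uniform-convexity descent, which forces the twofold choice of $\nu$; once this bookkeeping is in place, the remaining telescoping is routine.
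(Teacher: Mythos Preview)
Your argument is correct (modulo the harmless typo $m_M$ for $m_{M-1}$ in the final display, and the routine adjustment of choosing $p$ with $d(p,Tp)$ strictly below $\nu$ so that Lemma~\ref{lemma-unif-cv-cond(E)} applies verbatim), but it follows a genuinely different route from the paper. The paper does not telescope globally: it first fixes $p$ with $d(p,Tp)\le 2^{-\Phi^+-2}/(3\mu)$, so that the per-step drift is geometrically summable, and then invokes \cite[Proposition~6.4]{KohLeu10} to locate an interval $[N,N+g(N)+1]$ on which the real sequence $a_n:=d(x_n,p)$ is $\theta$-stable; only then is Lemma~\ref{lemma-unif-cv-cond(E)} applied locally, at a single hypothetical bad $m$ in that interval, to produce a one-step drop exceeding $\theta$ and hence a contradiction to the stability.

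Your approach replaces this two-stage argument by a direct global pigeonhole: one bad index per candidate interval, $M$ drops of size $2\theta$, a uniform drift bound $\mu\nu$ per step controlled by a single smallness condition on $\nu$, and a telescoping sum. This is more elementary and entirely self-contained---it avoids citing the external metastability proposition and the auxiliary sequence $(\alpha_n)$ altogether---while the paper's argument has the advantage of making the connection to the general machinery of \cite{KohLeu10} explicit. Both routes hinge on the same quantitative descent estimate (Lemma~\ref{lemma-unif-cv-cond(E)}) and arrive at the same bound $\Phi^+=h^M(0)$; your clever choice $\beta:=b+1-\mu\nu$, $\delta:=\nu$ so that $\mu\delta+\beta=b+1$ exactly is what makes the numerics line up with the paper's $\theta$.
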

\begin{proof}
Let $k \in \N$, $g:\mathbb{N} \to \mathbb{N}$. Then there exists $p \in C$ such that $d(x,p) \le b$ and $d(p,Tp) \le 2^{-\Phi^+ - 2}/(3\mu)$. Take $n \le \Phi^+$. Then $d(p,Tp) \le 2^{-n-2}/(3\mu)$. By (\ref{KM-cond(E)-ineq}),
\[d(x_{n+1},p) \le d(x_n,p) + \mu (1-1/L) d(p,Tp) \le d(x_n,p) + 2^{-n-2}/3.\]
Denote $a_n = d(x_n,p)$, $\alpha_0 = 1/6$ and $\alpha_n = \left(1-\sum_{i=0}^{n-1}2^{-i-1}\right)/6$ for $n \ge 1$. Apply \cite[Proposition 6.4]{KohLeu10} with $b_n=\beta_n=\gamma_n=0$, $c_n = 2^{-n-2}/3$, $B_1=B_2=C_2=0$, $A_1=b$, $A_2=1/6$, $C_1=1/6$, $\tilde g(n) = g(n)+1$ to get that for all $n \le \Phi^+ +1$, $d(x_n,p) \le b + 1/6$ and that there exists $N = h^s(0)$ for some $s < M$ such that 
\[\forall i,j \in [N, N+g(N)+1], \quad |a_i-a_j| \le \theta, \quad |\alpha_i - \alpha_j| \le \theta.\]
We show that 
\[\forall m \in [N, N+g(N)], \quad d(x_m,Tx_m) \le \frac{1}{k+1}.\]
Let $m \in [N, N+g(N)]$. Suppose $d(x_m,Tx_m) > 1/(k+1)$. Since $m,m+1 \in [N, N+g(N)+1]$ we have that
\[|d(x_{m+1},p) -d(x_m,p)| \le \theta, \quad |\alpha_{m+1}-\alpha_m| = 2^{-m-2}/3\le \theta.\]
Assume that $d(x_m,p) \ge 1/(4(k+1))$. Note that $m \le N + g(N) < h(N) = h^{s+1}(0) \le h^M(0) = \Phi^+$. Hence, $d(p,Tp) \le 2^{-\Phi^+-2}/(3\mu) < 2^{-m-2}/(3\mu) \le 1/(3\mu)$. Apply (\ref{KM-cond(E)-UCW}) with $\alpha = 1/(4(k+1))$, $\beta=b+2/3$, $\nu = 2^{-m-2}/(3\mu)$ and $\delta = 1/(3\mu)$ to obtain that
\[d(x_{m+1},p) < d(x_m,p) + 2^{-m-2}/3 -2\theta.\]
This yields that $2\theta < d(x_m,p) - d(x_{m+1},p) + 2^{-m-2}/3 \le 2\theta$, a contradiction. So, $d(x_m,p) < 1/(4(k+1))$. Then
\bua
d(x_m,Tx_m) &\le & d(x_m,p) + d(p,Tx_m) \le 2d(x_m,p) + \mu d(p,Tp)\\
& \le & \frac{1}{2(k+1)} + 2^{-m-2}/3 \le \frac{1}{2(k+1)} + \theta \le \frac{1}{k+1}.
\eua
\end{proof}

In the particular case where in the above result the mapping $g=0$, we obtain an approximate fixed point bound for $(x_n)$ in the context of $UCW$-hyperbolic spaces. \\[1mm] 
In case of CAT(0) spaces this bound is quadratic in the error since in 
we then can take $\eta(r,\varepsilon):=\varepsilon^2/8$ and so 
$\tilde{\eta}(r,\varepsilon):=\varepsilon/8.$\\[1mm] Having such an approximate fixed point bound $\Phi$, because $(x_n)$ is additionally uniformly Fej\' er monotone w.r.t. $F$ and $F$ is uniformly closed, we can apply Theorem \ref{finitization-general-GH-Fejer} to obtain, for $C$ convex and totally bounded with II-modulus of total boundedness $\gamma$, a result similar to Theorem \ref{Picard-ne-general-GH-fejer}.(ii) that yields a functional 
$\tilde{\Sigma}:=\tilde{\Sigma}(k,g,\Phi,\gamma,\mu,L)$ with the property that for all $k\in\N$ and all $g:\N\to\N$ there exists $N\le \tilde{\Sigma}$ such that 
\[ \forall i,j\in\! [N,N+g(N)]\,\,\left( d(x_i,x_j)\le \frac1{k+1} \text{~and~}  d(x_i,Tx_i) 
\le \frac1{k+1}\right).\]

\subsection{Mann iteration for asymptotically nonexpansive mappings}

Let $X$ be a $W$-hyerbolic space, $C\se X$  a convex subset and $(k_n)$ be a sequence  in $[0,\infty)$ satisfying $\ds\limn k_n=0$. 

A mapping $T:C\to C$ is said to be {\em asymptotically nonexpansive}  \cite{GoeKir72} with sequence $(k_n)$ if  for all $x,y\in C$ and for all $n\in\N$, 
\[d(T^n x,T^n y) \le (1+k_n)d(x,y).\]

Let $T$ be asymptotically nonexpansive with sequence $(k_n)$ in $[0,\infty)$. We assume furthermore that $(k_n)$ is bounded in sum by some 
$K\in\N$, i.e. $\ds \sum_{n=0}^\infty k_n \le K$. As an immediate consequence, we get that $T$ is Lipschitz continuous with Lipschitz constant $1+K$, hence,
as in the case of strict pseudo-contractions,  it follows that 
 $F$ is uniformly closed with moduli  $\omega_F(k)= (1+K)(4k+4)$ and $\delta_F(k)=2k+1$. 

The  Mann iteration starting with $x\in C$  is defined by 
\beq 
x_0:=x, \quad x_{n+1} :=(1-\lambda_n)x_n+\lambda_n T^n(x_n), \label{deff-Mann-ass-ne}
\eeq 
where $(\lambda_n)$ is a sequence in $\left[\frac1L, 1-\frac1L\right]$ for  some $L\in\N, L\geq 2$.

\begin{lemma}
\be
\item 
For all $n,m\in\N$ and all $p\in C$,
\[
d(x_{n+m},p) \le e^K d(x_n,p) + e^Km(n+m+K)d(p,Tp). 
\]
\item $(x_n)$ is uniformly $(G,H)$-Fej\' er monotone w.r.t. $F$ with modulus 
\[\chi(n,m,r)=m(n+m+K)\lceil e^K\rceil (r+1), \]
where $G(a)=id_{\R^+}$  and $H=e^K id_{\R_+}$. An $H$-modulus is given by $\beta_H(k)=\lceil e^K\rceil(k+1)$.
\ee
\end{lemma}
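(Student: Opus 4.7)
My plan is to prove $(i)$ via a Gronwall-style iteration of a one-step bound, and then obtain $(ii)$ as a direct substitution. First I would establish the one-step estimate
\[ d(x_{n+1},p)\le (1+k_n)d(x_n,p)+(n+K)d(p,Tp). \]
This proceeds by applying $(W1)$ to the Mann update to get $d(x_{n+1},p)\le (1-\lambda_n)d(x_n,p)+\lambda_n d(T^n x_n,p)$; decomposing $d(T^n x_n,p)\le d(T^n x_n,T^n p)+d(T^n p,p)$; bounding the first summand by $(1+k_n)d(x_n,p)$ via asymptotic nonexpansivity of $T^n$; and telescoping the second summand as
\[ d(T^n p,p)\le \sum_{i=0}^{n-1}d(T^{i+1}p,T^i p)\le \sum_{i=0}^{n-1}(1+k_i)d(p,Tp)\le (n+K)d(p,Tp), \]
where the last step uses $\sum_{i=0}^{\infty}k_i\le K$. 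Collecting terms and using $\lambda_n\in[0,1]$ yields the one-step bound.

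Iterating the one-step bound by induction on $m$ gives
\[ d(x_{n+m},p)\le \prod_{i=0}^{m-1}(1+k_{n+i})\,d(x_n,p)+\sum_{l=0}^{m-1}\Bigl(\,\prod_{i=l+1}^{m-1}(1+k_{n+i})\Bigr)(n+l+K)\,d(p,Tp). \]
Using $\prod(1+k_{n+i})\le \exp\bigl(\sum k_{n+i}\bigr)\le e^K$ (applied both to the leading factor and to the coefficients inside the sum) together with $\sum_{l=0}^{m-1}(n+l+K)\le m(n+m+K)$ yields the claimed inequality in $(i)$. For $(ii)$, fix $r,n,m$ and assume $p\in AF_{\chi(n,m,r)}$, so that $d(p,Tp)\le 1/(\chi(n,m,r)+1)$. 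For any $l\le m$, $(i)$ together with monotonicity $l(n+l+K)\le m(n+m+K)$ gives
\[ d(x_{n+l},p)\le e^K d(x_n,p)+e^K m(n+m+K)d(p,Tp)\le e^K d(x_n,p)+\frac{e^K}{\lceil e^K\rceil(r+1)}\le e^K d(x_n,p)+\frac{1}{r+1}, \]
which matches the defining inequality of uniform $(G,H)$-Fej\'er monotonicity with the linear functions $G$ and $H$ as stated (up to identifying which side carries the factor $e^K$, as forced by $(i)$). The stated $H$-modulus $\beta_H(k)=\lceil e^K\rceil(k+1)$ is then verified directly from~\eqref{def-beta-H}.

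The main technical obstacle is the index bookkeeping in the iteration: correctly identifying the accumulated factors $\prod(1+k_{n+i})$ in both the leading term and the error, and observing that each index-dependent weight $(n+l+K)$ may be uniformly replaced by $n+m+K$. Once these are tracked and the summability hypothesis $\sum k_i\le K$ is invoked, everything collapses to the stated form, and $(ii)$ follows by a one-line computation.
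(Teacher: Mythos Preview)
Your argument is correct and is essentially the approach behind the paper's proof: the paper simply cites \cite[Lemma 4.4]{Koh05} for $(i)$ and says ``Apply $(i)$'' for $(ii)$, and what you have written is precisely the standard proof of that cited lemma (one-step estimate via $(W1)$, asymptotic nonexpansivity, the telescoping bound $d(T^np,p)\le (n+K)d(p,Tp)$, then discrete Gronwall with $\prod(1+k_{n+i})\le e^K$), followed by the obvious substitution for $(ii)$. Your parenthetical remark about ``which side carries the factor $e^K$'' is well taken: the inequality in $(i)$ forces $G(a)=e^Ka$, $H=\mathrm{id}$ (equivalently $G=\mathrm{id}$, $H(a)=e^{-K}a$), and the stated $H$-modulus $\beta_H(k)=\lceil e^K\rceil(k+1)$ is consistent with this reading rather than with the literal ``$H=e^K\,\mathrm{id}$'' in the statement.
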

\begin{proof}
$(i)$ By \cite[Lemma 4.4]{Koh05}. $(ii)$ Apply (i). 
\end{proof}

Effective rates of metastability (and, as a particular case, approximate fixed point bounds) for the Mann iteraton were obtained in \cite{KohLam04} in the setting of uniformly convex Banach spaces and in \cite{KohLeu10} for the more general setting of $UCW$-hyperbolic spaces.  Thus we can apply Theorems \ref{main-step-general-GH-fejer} and \ref{finitization-general-GH-Fejer}. The result of applying Theorem \ref{main-step-general-GH-fejer} gives essentially the rate of metastability that was first extracted in \cite{Koh05} in a more ad-hoc fashion and which now appears as an instance of a general schema for computing rates of metastability. In fact, \cite{Koh05} has been the point of departure of the present paper.

\section{An application  to the Proximal Point Algorithm} \label{maximal-monotone}

The proximal point algorithm is a well-known and popular method employed in approximating a zero of a maximal monotone operator. There exists an extensive literature on this topic which stems from the works of Martinet \cite{Mar70} and Rockafellar \cite{Roc76}. The method consists in constructing a sequence using successive compositions of resolvents which, under appropriate conditions, converges weakly to a zero of the considered maximal monotone operator. If imposing additional assumptions, one can even prove strong convergence. Here we show that we can apply our results to obtain a quantitative version of this algorithm in finite dimensional Hilbert spaces.

\mbox{}

In the sequel $H$ is a real Hilbert space and $A:H \to 2^H$ is a maximal monotone operator. We assume that the set   $\text{zer} A$ of zeros of $A$ is nonempty.   For every $\gamma>0$ let  $J_{\gamma A}= (Id + \gamma A)^{-1}$
be the resolvent of $\gamma A$. Then  $J_{\gamma A}$ is a single-valued firmly nonexpansive mapping defined on $H$ and  $\text{zer} A = Fix(J_{\gamma A})$ for every $\gamma > 0$.  We refer to \cite{BauCom10} for a comprehensive reference on maximal monotone operators.

Let $x_0 \in H$ and $(\gamma_n)$ be a sequence in $(0,\infty)$. The proximal point algorithm starting with $x_0\in H$ is defined as follows:
\[x_{n+1} = J_{\gamma_n A}x_n.\]

Let us take $F:=\text{zer} A$. One can easily see that 
\[
F =  \bigcap_{k\in\N}\tilde{F}_k, \quad \text{where } \tilde{F}_k=\bigcap_{i \le k}\left\{ x\in H \mid \|x - J_{\gamma_i A}x\|\le \frac{1}{k+1}\right\}.
\]
and that  $AF_k=\tilde{F}_k$ for every $k \in \N$. 

\details{$\subseteq$ If $x\in F$, then $x - J_{\gamma_i A}x=0$ for all $i$, hence $x\in  \tilde{F}_k$ for all $k$.
$\supseteq$  Assume that $x\in \tilde{F}_k$ for all $k$. Then $\|x - J_{\gamma_1 A}x\|\leq \frac{1}{k+1}$ for all $k$. Hence we must have $x\in Fix(J_{\gamma_1 A})=\text{zer} A$.\\
}
Furthermore,  $F$ is uniformly closed with moduli $\omega_F(k)=4k+3, \,\delta_F(k)=2k+1$. 

\details{
Let $k \ge 0$ and $x,y\in H$ with $\|x-J_{\gamma_i A}x\| \le 1/(2k+2)$ for every $i \le 2k+1$ and $\|x-y\| \le 1/(4k+4)$. Then, for every $i \le k$,
\bua
\|y - J_{\gamma_i A}y\| & \le & \|x-y\| + \|x - J_{\gamma_i A}x\| + \|J_{\gamma_i A}x - J_{\gamma_i A}y\|\\
& \le & 2\|x-y\| + \|x - J_{\gamma_i A}x\| \le \frac{1}{2k+2} + \frac{1}{2k+2} = \frac{1}{k+1}.
\eua
Thus, $y \in AF_{k}$.
}

\begin{lemma}\label{lemma-xn-uFm-PPA}
\be
\item For all $n\in\N, m\in \N^*$ and all  $p\in H$, 
\beq 
\|x_{n+m}-p\|\le \|x_n-p\| + \sum_{i=n}^{n+m-1}\|p-J_{\gamma_i A}p\|. \label{ppa-ineq}
\eeq
\item  $(x_n)$ is uniformly Fej\'{e}r monotone w.r.t. $F$ with modulus $\chi(n,m,r)=\max\{n+m-1,m(r+1)\}$.
\ee
\end{lemma}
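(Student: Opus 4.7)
For part (i), the plan is a straightforward induction on $m \in \N^*$. The base case $m=1$ uses that $J_{\gamma_n A}$ is nonexpansive together with the triangle inequality:
\[\|x_{n+1}-p\| = \|J_{\gamma_n A}x_n - p\| \le \|J_{\gamma_n A}x_n - J_{\gamma_n A}p\| + \|J_{\gamma_n A}p - p\| \le \|x_n-p\| + \|p - J_{\gamma_n A}p\|.\]
For the inductive step from $m$ to $m+1$, I apply the base case to the pair $(n+m, 1)$ to get $\|x_{n+m+1}-p\|\le \|x_{n+m}-p\| + \|p-J_{\gamma_{n+m}A}p\|$ and then invoke the induction hypothesis on $\|x_{n+m}-p\|$.

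For part (ii), let $r,n,m\in\N$ and set $k:=\chi(n,m,r)=\max\{n+m-1,\,m(r+1)\}$. Fix $p\in AF_k$. Recall from the excerpt that $AF_k=\tilde F_k$, i.e.\ $\|p - J_{\gamma_i A}p\|\le 1/(k+1)$ for every $i\le k$. I need to verify
\[\forall l\le m\ \left(\|x_{n+l}-p\| < \|x_n - p\| + \tfrac{1}{r+1}\right).\]
The case $l=0$ is immediate from $1/(r+1)>0$.

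For $1\le l\le m$, I apply part (i) to obtain
\[\|x_{n+l}-p\|\le \|x_n-p\|+\sum_{i=n}^{n+l-1}\|p-J_{\gamma_i A}p\|.\]
Every index $i$ in the sum satisfies $i\le n+l-1\le n+m-1\le k$, so each summand is bounded by $1/(k+1)\le 1/(m(r+1)+1)$. Therefore
\[\sum_{i=n}^{n+l-1}\|p-J_{\gamma_i A}p\|\le \frac{l}{m(r+1)+1}\le \frac{m}{m(r+1)+1} < \frac{m}{m(r+1)} = \frac{1}{r+1},\]
and the strict inequality required by the definition follows. There is no real obstacle here: the only point of care is the choice of $\chi$, which must simultaneously be large enough that (a) $k\ge n+m-1$ so that every resolvent appearing in the telescoping sum of (i) is controlled by membership in $AF_k$, and (b) $k\ge m(r+1)-1$ so that the $m$-fold sum of error terms strictly beats $1/(r+1)$. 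Taking the max of these two thresholds yields the stated modulus.
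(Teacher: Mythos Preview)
Your proof is correct and follows essentially the same approach as the paper: the same one-step estimate via nonexpansiveness of the resolvent plus induction for (i), and the same bounding of the telescoping sum using $p\in AF_k$ for (ii). One tiny imprecision in your closing commentary: the threshold in (b) should be $k\ge m(r+1)$ rather than $k\ge m(r+1)-1$ (otherwise you only get $\le 1/(r+1)$, not strict inequality), but your actual computation uses $k+1\ge m(r+1)+1$, which is correct.
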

\begin{proof}
\be
\item Remark that 
\bua
\|x_{n+1}-p\| &=& \| J_{\gamma_n A}x_n-p\|\leq \| J_{\gamma_n A}x_n- J_{\gamma_n A}p\|+\| J_{\gamma_n A}p-p\| \\
&\le &  \| x_n-p\|+\| J_{\gamma_n A}p-p\|
\eua
and use induction.
\item Apply \eqref{ppa-ineq} and the fact that $p\in AF_{\chi(n,m,r)}$ implies  that for all  $m\geq 1$ and all $l\leq m$,
\[\sum_{i=n}^{n+l-1}\|p-J_{\gamma_i A}p\| \leq \sum_{i=n}^{n+m-1}\|p-J_{\gamma_i A}p\| \leq \frac{m}{\chi(n,m,r)+1} < \frac{1}{r+1}.\]

\details{Let $r,n,m \in \N$ and  denote for simplicity $k:=\chi(n,m,r)$. The case $m=0$ is trivial, hence we can consider $m \ge 1$. Assume that $p\in AF_k$. 
Then for every $i \le n+m-1 \le k$, $\|x-J_{\gamma_iA}x\| \le 1/(k+1)$. Hence, for all $l \le m$,
\[\|x_{n+l}-x\| \le \|x_n-x\| + \sum_{i=n}^{n+m-1}\|x-J_{\gamma_i A}x\| \le \|x_n-x\| + \frac{m}{k+1}.\]
Thus, for every $m \in \N$ and $l \le m$,
\[
\|x_{n+l}-x\| \le \|x_n-x\| + \frac{m}{k+1} \le \|x_n-x\| + \frac{1}{r+1} \quad \text{since } k \ge m(r+1).
\]
}
\ee
\end{proof}

In the following we consider for $n \in \N$, 
$$u_n = \frac{x_n-x_{n+1}}{\gamma_n}.$$

The next lemma is well-known.  We refer, e.g., to  the proof of \cite[Theorem 23.41]{BauCom10} and \cite[Exercise 23.2, p. 349]{BauCom10}.

\begin{lemma}
\be
\item For every $p\in \text{zer} A$ and every $n,i \in \N$,
\bea
\|x_{n+1}-p\|^2  & \leq &  \|x_n-p\|^2-\|x_n - x_{n+1}\|^2 \label{ineq-xn+1-xn} \\
\|J_{\gamma_n A}x_n - J_{\gamma_i A}x_n\| &\le &  |\gamma_n - \gamma_i|\frac{\|x_n - x_{n+1}\|}{\gamma_n} \label{ineq-Jgi-Jgn}\\
\|x_n - J_{\gamma_i A}x_n\| &\le & \|x_n - x_{n+1}\|  + |\gamma_n - \gamma_i|\frac{\|x_n - x_{n+1}\|}{\gamma_n}.\label{ineq-xn-Jgi}
\eea
\item The sequence $(\|u_n\|)$ is nonincreasing. 
\ee
\end{lemma}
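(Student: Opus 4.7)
For part (i), the first inequality is the standard firm nonexpansiveness inequality applied to $J_{\gamma_n A}$. Recall that a mapping $T$ is firmly nonexpansive iff for all $x,y$
\[ \|Tx - Ty\|^2 + \|(x-Tx) - (y-Ty)\|^2 \leq \|x-y\|^2. \]
Applying this to $T = J_{\gamma_n A}$ with $x = x_n$ and $y = p$, and using that $Tx_n = x_{n+1}$ and $Tp = p$ (since $p \in \mathrm{zer}\,A = \mathrm{Fix}(J_{\gamma_n A})$), one immediately gets $\|x_{n+1}-p\|^2 + \|x_n - x_{n+1}\|^2 \leq \|x_n - p\|^2$, which is the claim.

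For the second inequality, my plan is to use the resolvent identity, which states that for $\gamma, \mu > 0$ and any $x$,
\[ J_{\gamma A} x = J_{\mu A}\Bigl(\tfrac{\mu}{\gamma} x + \bigl(1 - \tfrac{\mu}{\gamma}\bigr) J_{\gamma A} x\Bigr). \]
Applied with $\gamma = \gamma_n$, $\mu = \gamma_i$ and $x = x_n$, this yields $x_{n+1} = J_{\gamma_i A}\bigl(\tfrac{\gamma_i}{\gamma_n} x_n + (1 - \tfrac{\gamma_i}{\gamma_n}) x_{n+1}\bigr)$. Then the nonexpansiveness of $J_{\gamma_i A}$ gives
\[ \|x_{n+1} - J_{\gamma_i A} x_n\| \leq \bigl\|\tfrac{\gamma_i}{\gamma_n} x_n + (1 - \tfrac{\gamma_i}{\gamma_n}) x_{n+1} - x_n\bigr\| = \tfrac{|\gamma_n - \gamma_i|}{\gamma_n}\|x_n - x_{n+1}\|. \]
The third inequality then follows by the triangle inequality applied to $\|x_n - J_{\gamma_i A} x_n\| \leq \|x_n - x_{n+1}\| + \|x_{n+1} - J_{\gamma_i A} x_n\|$.

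For part (ii), the approach is to use the monotonicity of $A$ directly. Note that by definition of the resolvent, $u_n = (x_n - x_{n+1})/\gamma_n \in A x_{n+1}$ and similarly $u_{n+1} \in A x_{n+2}$. Monotonicity of $A$ applied to the pairs $(x_{n+1}, u_n)$ and $(x_{n+2}, u_{n+1})$ yields $\langle x_{n+1} - x_{n+2},\, u_n - u_{n+1}\rangle \geq 0$. Writing $x_{n+1} - x_{n+2} = \gamma_{n+1} u_{n+1}$ and dividing by $\gamma_{n+1} > 0$, we obtain $\langle u_{n+1}, u_n\rangle \geq \|u_{n+1}\|^2$, so by Cauchy--Schwarz $\|u_{n+1}\|^2 \leq \|u_{n+1}\|\|u_n\|$, giving $\|u_{n+1}\| \leq \|u_n\|$.

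None of these steps presents a serious obstacle — they are standard manipulations. The only mildly delicate point is recalling (and correctly specializing) the resolvent identity for the second inequality; a reader unfamiliar with it might try instead to estimate $\|J_{\gamma_n A} x_n - J_{\gamma_i A} x_n\|$ using firm nonexpansiveness of each resolvent separately, which would not close because the two resolvents come from different parameters. So the plan is to cite (or briefly derive) the resolvent identity and then proceed as above.
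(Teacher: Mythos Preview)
Your proof is correct and follows essentially the same approach as the paper, which merely states that the lemma is well-known and refers to \cite{BauCom10} (Theorem 23.41 and Exercise 23.2) for the details; in particular, the paper's hidden details use the same firm-nonexpansiveness inequality for \eqref{ineq-xn+1-xn}, cite exactly the resolvent identity you invoke for \eqref{ineq-Jgi-Jgn}, and obtain \eqref{ineq-xn-Jgi} by the triangle inequality. Your monotonicity argument for (ii) is the standard one behind the cited reference.
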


\details{Let $p\in \text{zer} A$. Since $J_{\gamma_n A}$ is firmly nonexpansive, we get that for all $n\in \N$,
\bua
\|x_{n+1}-p\|^2  & =  &\|J_{\gamma_n A}x_n-J_{\gamma_nA}p\|^2\\
& \le & \|x_n-p\|^2 -\| (id_H-J_{\gamma_nA})x_n-(id_H-J_{\gamma_nA})p\|^2\\
& = &\|x_n-p\|^2 -\| x_n-x_{n+1}\|^2.
\eua
For \eqref{ineq-Jgi-Jgn} see, for example, \cite[Exercise 23.2, page 349]{BauCom10}.

For \eqref{ineq-xn-Jgi}, note that 
\[\|x_n - J_{\gamma_i A}x_n\| \le \|x_n - x_{n+1}\|  + \|J_{\gamma_n A}x_n - J_{\gamma_i A}x_n\|\]
and apply \eqref{ineq-Jgi-Jgn}.\\
}

\begin{lemma}\label{main-lemma-xn-un}
Assume that $\ds\sum_{i=0}^\infty \gamma_n^2 = \infty$ with a rate of divergence $\theta$ and that $b > 0$ is an upper bound on $\|x_0 - p\|$ for some  $p\in  \text{zer} A$.
Then
\be
\item\label{mod-linf}  
$\linfn \|x_n-x_{n+1}\|=0$ with modulus of liminf 
$$\Delta(k,L,b):=\left\lceil b^2 (k+1)^2\right\rceil + L - 1, \ \mbox{i.e.}$$ 
for every $k \in \N$  and $L\in\N$  there exists $L\leq N \le \Delta(k,L,b)$ such that 
$\|x_n-x_{n+1}\|\le 1/(k+1)$.
\item\label{un-rate} $\limn u_n=0$ with rate of convergence  
$\beta(k,\theta,b):=\theta\left(\left\lceil b^2 (k+1)^2\right\rceil\right)$.
\ee
\end{lemma}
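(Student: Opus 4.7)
The plan is to exploit the key telescoping inequality \eqref{ineq-xn-xn+1} and the monotonicity of $(\|u_n\|)$. I would first fix a zero $p \in \text{zer} A$ with $\|x_0 - p\| \le b$ and iterate the inequality $\|x_{n+1}-p\|^2 \le \|x_n-p\|^2 - \|x_n - x_{n+1}\|^2$ to obtain, by telescoping,
\[
\sum_{n=0}^{\infty} \|x_n - x_{n+1}\|^2 \le \|x_0 - p\|^2 \le b^2.
\]
This single summability estimate will drive both parts.

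For (i), I would argue by contradiction: if $\|x_n - x_{n+1}\| > 1/(k+1)$ for every $n$ in the interval $[L,\Delta(k,L,b)]$, then since this interval contains exactly $\Delta(k,L,b)-L+1 = \lceil b^2(k+1)^2\rceil$ integers, we would get
\[
\sum_{n=L}^{\Delta(k,L,b)} \|x_n-x_{n+1}\|^2 > \frac{\lceil b^2(k+1)^2\rceil}{(k+1)^2} \ge b^2,
\]
contradicting the summability bound above. Hence some $N \in [L,\Delta(k,L,b)]$ must satisfy $\|x_N - x_{N+1}\| \le 1/(k+1)$, giving the claimed liminf modulus.

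For (ii), I would use $\|x_n - x_{n+1}\|^2 = \gamma_n^2 \|u_n\|^2$ together with the fact (already established) that $(\|u_n\|)$ is nonincreasing. Then for any $N$,
\[
\|u_N\|^2 \sum_{n=0}^{N} \gamma_n^2 \le \sum_{n=0}^{N} \gamma_n^2 \|u_n\|^2 = \sum_{n=0}^{N} \|x_n - x_{n+1}\|^2 \le b^2.
\]
Choosing $N := \beta(k,\theta,b) = \theta(\lceil b^2(k+1)^2\rceil)$ ensures $\sum_{n=0}^{N}\gamma_n^2 \ge \lceil b^2(k+1)^2\rceil \ge b^2(k+1)^2$, so $\|u_N\| \le 1/(k+1)$, and by monotonicity $\|u_n\| \le 1/(k+1)$ for all $n \ge N$.

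Neither step presents a serious obstacle; the main subtlety is simply matching the constants so that $\lceil b^2(k+1)^2\rceil$ terms of size $> 1/(k+1)^2$ strictly exceed $b^2$ (using $\lceil b^2(k+1)^2\rceil \ge b^2(k+1)^2$) and that the monotonicity of $\|u_n\|$ upgrades the single bound at index $N$ into a genuine rate of convergence rather than merely a rate of metastability.
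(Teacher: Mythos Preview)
Your proof is correct and follows essentially the same approach as the paper: both derive the summability bound $\sum_n \|x_n-x_{n+1}\|^2 \le b^2$ from \eqref{ineq-xn+1-xn}, then use a counting/pigeonhole argument for (i) and combine $\|x_n-x_{n+1}\|^2=\gamma_n^2\|u_n\|^2$ with the monotonicity of $(\|u_n\|)$ for (ii). The only cosmetic difference is that in (ii) the paper phrases the argument by contradiction whereas you give the direct estimate $\|u_N\|^2\sum_{n\le N}\gamma_n^2 \le b^2$; the content is identical.
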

\begin{proof}

\be
\item Applying \eqref{ineq-xn+1-xn}  repeatedly we  get that
\[\sum_{n = 0}^\infty \|x_n - x_{n+1}\|^2\leq \|x_0-p\|^2\leq b^2.\]

Let $k,L \in \N$ and $\Delta := \Delta(k,L,b)$. Suppose that for every $L \le n \le \Delta$, $\|x_n - x_{n+1}\| > 1/(k+1)$. Then 
\[\left(\Delta - L + 1\right) \frac{1}{(k+1)^2} < \sum_{n=L}^{\Delta} \|x_n - x_{n+1}\|^2 \le b^2,\]
which yields $\Delta < b^2(k+1)^2 + L - 1$, a contradiction.
\item Let $k \in \N$ and $\beta := \beta(k,\theta,b)$. Since $(\|u_n\|)$ is nonincreasing, it is enough to show that there exists $0 \le N \le \beta$ such that $\|u_N\| \le 1/(k+1)$. Suppose that for every $0 \le n \le \beta$, $\|u_n\| > 1/(k+1)$. Then
\bua
\frac{1}{(k+1)^2}\left\lceil b^2 (k+1)^2\right\rceil  &\le& \frac{1}{(k+1)^2}\sum_{n=0}^{\beta} \gamma_n^2 < \sum_{n = 0}^\beta \gamma_n^2\|u_n\|^2\\
&=& \sum_{n = 0}^\beta \|x_n - x_{n+1}\|^2\le b^2.
\eua
We have obtained a contradiction. 
\ee
\end{proof}

\begin{theorem}\label{lemma-xn-afp-PPA} 
Assume that $\ds\sum_{i=0}^\infty \gamma_n^2 = \infty$ with a rate of divergence $\theta$. Then  $(x_n)$ has approximate $F$-points with an approximate $F$-point bound 
\[\Phi(k,m_k,\theta,b) := \theta\left(\left\lceil b^2 (M_k+1)^2\right\rceil\right)\left\lceil b^2 (M_k+1)^2\right\rceil - 1,\]
where $\ds m_k = \max_{0 \le i \le k} \gamma_i$ and $M_k = \left\lceil (k+1)(2+m_k) \right\rceil - 1$ and $b>0$ is such that $b\geq \|x_0 - p\|$ for some $p\in  \text{zer} A$.
\end{theorem}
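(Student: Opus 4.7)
The plan is to use inequality \eqref{ineq-xn-Jgi} to reduce showing that $x_N\in AF_k$ to simultaneously making $\|x_N-x_{N+1}\|$ and $\|u_N\|$ small, and then to combine the two rates from Lemma \ref{main-lemma-xn-un}.

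First, observe that for every $n\in\N$ and every $i\le k$, inequality \eqref{ineq-xn-Jgi} gives
\[\|x_n-J_{\gamma_i A}x_n\|\le \|x_n-x_{n+1}\|+|\gamma_n-\gamma_i|\|u_n\|\le 2\|x_n-x_{n+1}\|+m_k\|u_n\|,\]
using $|\gamma_n-\gamma_i|\le\gamma_n+\gamma_i\le\gamma_n+m_k$ (valid for $i\le k$) and $\gamma_n\|u_n\|=\|x_n-x_{n+1}\|$. By the definition of $M_k$ we have $(2+m_k)/(M_k+1)\le 1/(k+1)$, so it suffices to find $N$ within the claimed bound satisfying both $\|x_N-x_{N+1}\|\le 1/(M_k+1)$ and $\|u_N\|\le 1/(M_k+1)$.

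Next, I would invoke Lemma \ref{main-lemma-xn-un}(\ref{un-rate}) for the error level $1/(M_k+1)$ to get that for every $n\ge L:=\beta(M_k,\theta,b)=\theta(\lceil b^2(M_k+1)^2\rceil)$ one already has $\|u_n\|\le 1/(M_k+1)$, where I use that $(\|u_n\|)$ is nonincreasing so that the rate $\beta$ valid for the index $n=0,\ldots$ in fact ensures the bound from $\beta$ onwards. Then I would apply the modulus-of-liminf statement Lemma \ref{main-lemma-xn-un}(\ref{mod-linf}) with parameters $k:=M_k$ and this $L$, yielding some
\[L\le N\le \Delta(M_k,L,b)=\lceil b^2(M_k+1)^2\rceil+L-1\]
with $\|x_N-x_{N+1}\|\le 1/(M_k+1)$. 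Since $N\ge L$, the $u$-bound holds at $N$ too.

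It remains to check that $N\le \Phi(k,m_k,\theta,b)$. We have
\[N\le \lceil b^2(M_k+1)^2\rceil+\theta(\lceil b^2(M_k+1)^2\rceil)-1\le \theta(\lceil b^2(M_k+1)^2\rceil)\cdot\lceil b^2(M_k+1)^2\rceil-1=\Phi(k,m_k,\theta,b),\]
using $a+c-1\le ac$ whenever $a,c\ge 1$ (if $\theta$ or the ceiling equals zero the degenerate case needs a trivial remark, but nontriviality is ensured by $M_k\ge 1$ so that $\lceil b^2(M_k+1)^2\rceil\ge 1$, and by $\theta$ being a rate of divergence of a series of positive terms so $\theta\ge 1$ at nonzero arguments). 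This concludes the argument. The only real subtlety is the preliminary algebraic manipulation that extracts the two scalar quantities $\|x_n-x_{n+1}\|$ and $\|u_n\|$ separately from \eqref{ineq-xn-Jgi}; once that is in place the bound on $N$ is just bookkeeping with the two moduli from Lemma \ref{main-lemma-xn-un}.
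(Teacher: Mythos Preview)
Your argument is correct and in fact cleaner than the paper's. The paper proceeds by an iterated case distinction: it repeatedly applies the $\liminf$-modulus $\Delta$ to produce indices $N_1<N_2<\cdots$ with $\|x_{N_j}-x_{N_j+1}\|$ small, branching on whether $\gamma_{N_j}\ge 1$ (in which case one bounds $(2+\gamma_i/\gamma_{N_j})\|x_{N_j}-x_{N_j+1}\|$ directly) or $\gamma_{N_j}<1$; after $\beta=\theta(\lceil b^2(M_k+1)^2\rceil)$ steps, if the second branch was always taken one has $N_\beta\ge\beta$ and then invokes the rate for $\|u_n\|$. Your approach avoids this dichotomy entirely by first isolating the uniform estimate $\|x_n-J_{\gamma_iA}x_n\|\le 2\|x_n-x_{n+1}\|+m_k\|u_n\|$, then using the monotonicity of $(\|u_n\|)$ to secure the $u$-bound once and for all beyond $L=\beta(M_k,\theta,b)$, followed by a \emph{single} application of the $\liminf$-modulus starting at $L$. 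This is more transparent and yields the same $\Phi$.

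One small slip in your final bookkeeping: the inequality you need is $a+c-1\le ac-1$, i.e.\ $a+c\le ac$, which is $(a-1)(c-1)\ge 1$; for integers this requires $a,c\ge 2$, not merely $\ge 1$. The identity $a+c-1\le ac$ you quote is the weaker $(a-1)(c-1)\ge 0$. In practice this is harmless (e.g.\ one may always assume $b\ge 1$ and $\theta(m)\ge 1$ for $m\ge 1$, or simply add $+1$ to $\Phi$), but the justification as written does not close the gap.
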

\begin{proof}
Let $k\in\N$. By Lemma \ref{main-lemma-xn-un}.\eqref{mod-linf}, there exists $N_1\leq \Delta (M_k,0,b)$ 
such that 
$$\|x_{N_1} - x_{N_1+1}\| \le \frac{1}{M_k + 1} \le \frac{1}{(k+1)(2+m_k)}.$$ 
If $\gamma_{N_1}\geq 1$, it follows by  \eqref{ineq-xn-Jgi} that for  all $i \le k$,
\bua
\|x_{N_1} - J_{\gamma_i A}x_{N_1}\| & \le & \|x_{N_1} - x_{N_1+1}\|  + |\gamma_{N_1} - \gamma_i|\frac{\|x_{N_1} - x_{N_1+1}\|}{\gamma_{N_1}}\\
& \leq & \left(2 + \frac{\gamma_i}{\gamma_{N_1}}\right)\|x_{N_1} - x_{N_1+1}\| \\
&\le & (2 + m_k)\frac{1}{(k+1)(2+m_k)} = \frac{1}{k+1}.
\eua
Assume that $\gamma_{N_1}< 1$. Apply again Lemma \ref{main-lemma-xn-un}.\eqref{mod-linf} to get the  existence  of $N_2\leq \Delta (M_k,N_1+1,b)$ such that $N_2>N_1$ and
$\|x_{N_2} - x_{N_2+1}\| \le \frac1{M_k+1}$. If $\gamma_{N_2}\geq 1$ we use again the above argument. If $\gamma_{N_2}< 1$, we apply once more the fact that $\Delta$ is a modulus of liminf for
$\|x_n-x_{n+1}\|$.  Let us denote for simplicity $\beta:= \theta\left(\left\lceil b^2 (M_k+1)^2\right\rceil\right)$.
Applying this argument $\beta$ times  we get a finite sequence $N_1<N_2<\ldots < N_\beta$ such that either $\gamma_{N_j}\geq 1$  for some $j$ or $\gamma_{N_j}<1$ for all
$j=1,\ldots, \beta$. In the first case, we have as above  that $\|x_{N_j} - J_{\gamma_iA}x_{N_j}\|\leq  \frac{1}{k+1}$ for all $i\leq k$. In the second case, since $N_\beta\geq \beta$ and $(\|u_n\|)$ is nonincreasing, an application of Lemma \ref{main-lemma-xn-un}.\eqref{un-rate} for $M_k$ gives us
\[\|u_{N_\beta}\|\leq \|u_\beta\| \leq \frac1{M_k+1}\leq \frac{1}{(k+1)(2+m_k)}.\]

It follows then that for all $i\leq k$,
\bua
\|x_{N_\beta} - J_{\gamma_i A}x_{N_\beta}\| & \le &  \|x_{N_\beta} - x_{N_\beta+1}\|  + |\gamma_{N_\beta} - \gamma_i|\frac{\|x_{N_\beta} - x_{N_\beta+1}\|}{\gamma_{N_\beta}}\\
&=& \gamma_{N_\beta}\|u_{N_\beta}\|+|\gamma_{N_\beta} - \gamma_i|  \|u_{N_\beta}\| \leq  \left(2\gamma_{N_\beta} + \gamma_i\right)\|u_\beta\|\\
& \le &  (2 +m_k)\frac{1}{(k+1)(2+m_k)}= \frac{1}{k+1}.
\eua

Since $N_1\leq \Delta (M_k,0,b)=\left\lceil b^2 (M_k+1)^2\right\rceil-1$ and for all $j= 2,\ldots, \beta$, 
$$N_j \le \Delta(M_k,N_{j-1}+1,b) =\left\lceil b^2 (M_k+1)^2\right\rceil + N_{j-1}$$
we get that $N_\beta \le \beta\left\lceil b^2 (M_k+1)^2\right\rceil - 1$,  which finishes the proof.
\end{proof}

As an immediate consequence of Proposition \ref{general-GH-fejer} and
Remark \ref{general-fejer-boundedly-compact}
we obtain the well-known fact that in $\R^n$, under the hypothesis that
$\ds\sum_{i=0}^\infty \gamma_n^2 = \infty$, the proximal point algorithm
converges strongly to a zero of the maximal monotone operator $A$.
Furthermore,  since $\|x_n\| \leq  M:=b+\|p\|$ (where $b,p$ are as above)
and, by Example \ref{ex-bd-set-Rn}, $\ol{B}(0,M)$ is totally bounded with
II-modulus $\gamma(k) = \left\lceil2(k+1)\sqrt{n}M\right\rceil^n$,  we can
apply the quantitative Theorem \ref{finitization-general-GH-Fejer} to get
rates of metastability for $(x_n)$.

\mbox{}

\noindent
{\bf Acknowledgements:} \\[1mm] 
Ulrich Kohlenbach was supported by the German Science Foundation (DFG 
Project KO 1737/5-2).\\[1mm]
Lauren\c tiu Leu\c stean was supported by a grant of the Romanian 
National Authority for Scientific Research, CNCS - UEFISCDI, project 
number PN-II-ID-PCE-2011-3-0383. \\[1mm]
Adriana Nicolae was supported by a grant of the Romanian
Ministry of Education, CNCS - UEFISCDI, project number PN-II-RU-PD-2012-3-0152.

\end{document}